\numberwithin{equation}{section}
\newtheorem{theorem}{Theorem}[section]
\newtheorem{lemma}[theorem]{Lemma}
\newtheorem{corollary}[theorem]{Corollary} 
\newtheorem{proposition}[theorem]{Proposition}
\theoremstyle{definition}
\newtheorem{definition}{Definition}[section]
\newtheorem{example}{Example}[section]
\theoremstyle{remark}
\newtheorem{remark}{Remark}[section]
\newcommand{\R}{\mathbb{R}}
\newcommand{\Rn}{\R^n}
\newcommand{\il}{\Delta_{\infty}}
\newcommand{\ilN}{\Delta_{\infty}^N}
\newcommand{\plN}{\Delta_{p}^N}
\newcommand{\ue}{u^{\epsilon}}
\newcommand{\la}{\langle}
\newcommand{\ra}{\rangle}
\newcommand{\Om}{\Omega}
\newcommand{\ez}{\epsilon}
\newcommand{\loc}{\textnormal{loc}}
\DeclareMathOperator{\diverg}{div\,}
\DeclareMathOperator{\dist}{dist\,}
\newcommand{\abs}[1]{\left|#1\right|}
\newcommand{\norm}[1]{\left|\left|#1\right|\right|}
\newcommand{\vp}{\varphi}
\newcommand{\kom}[1]{}
\renewcommand{\kom}[1]{{\bf [#1]}}
\newcommand{\eps}{{\varepsilon}}
\begin{document}
\title[Second order regularity to the general parabolic $p$-Laplace equation]{A systematic approach on the second order regularity of solutions to the general parabolic $p$-Laplace equation}

\author{Yawen Feng}
\address[Yawen Feng]{Department of Mathematics and Statistics, University of
Jyv\"askyl\"a, PO~Box~35, FI-40014 Jyv\"askyl\"a, Finland; and}
\address{School of Mathematical Science, Beihang University, Changping District Shahe Higher Education Park South Third Street No. 9, Beijing 102206, P. R. China}
\email{yawen.y.feng@jyu.fi}

\author{Mikko Parviainen}
\address[Mikko Parviainen]{Department of Mathematics and Statistics, University of
Jyv\"askyl\"a, PO~Box~35, FI-40014 Jyv\"askyl\"a, Finland}
\email{mikko.j.parviainen@jyu.fi}

\author{Saara Sarsa}
\address[Saara Sarsa]{Department of Mathematics and Statistics, University of Helsinki, PO~Box~68, (Pietari Kalmin katu 5), FI-00014 University of Helsinki, Finland}
\email{saara.sarsa@helsinki.fi}

\subjclass[2010]{35B65, 35K55, 35K65, 35K67} 
\keywords{General $p$-parabolic equations, Viscosity solutions, Divergence structures, Sobolev regularity, Time derivative} 
\begin{abstract}
We study a general form of a degenerate or singular parabolic equation 
$$ u_t-|Du|^{\gamma}\big(\Delta u+(p-2)\Delta_\infty^Nu\big)=0 $$
that generalizes both the standard parabolic $p$-Laplace equation and the normalized version that arises from stochastic game theory.
We develop a systematic approach to study second order Sobolev regularity and show that $D^2u$ exists as a function and belongs to $L^2_\loc$ for a certain range of parameters. In this approach proving the estimate boils down to verifying that a certain coefficient matrix is positive definite. As a corollary we obtain, under suitable assumptions, that a viscosity solution has a Sobolev time derivative belonging to $L^2_\loc$.
\end{abstract}
\maketitle
\tableofcontents
\section{Introduction}
Recently, the second order regularity for parabolic $p$-Laplace type equations has been studied by H{\o}eg and Lindqvist \cite{hoegl20}, Dong, Peng, Zhang and Zhou \cite{dongpzz20}, and the authors \cite{fengps22}.
In this article, we consider a rather general class of parabolic equations
\begin{align}\label{geparabeq}
    u_t-|Du|^{\gamma}\big(\Delta u+(p-2)\Delta_\infty^Nu\big)=0
\end{align}
with $1<p<\infty$ and $-1<\gamma<\infty$, where 
\begin{align*}
    \ilN u:=\abs{Du}^{-2}\sum_{i,j=1}^nu_{x_i}u_{x_j}u_{x_ix_j}=\abs{Du}^{-2}\la Du,D^2uDu\ra =\abs{Du}^{-2}\il u
\end{align*}
denotes the normalized infinity Laplacian.
The equation contains the game theoretic or normalized $p$-parabolic equation and the divergence form standard $p$-parabolic equation as special cases. 
The equation is not uniformly parabolic or in divergence form except in special cases, and it can be highly degenerate or singular in the gradient variable. Regularity for such equations has been recently studied for example by Imbert, Jin and Silvestre as well as Parviainen and V\'azquez as discussed below.
The objective of this article is to develop a systematic approach to study the second order spatial regularity of viscosity solutions to \eqref{geparabeq}. In this approach proving the estimate reduces down to verifying that a certain coefficient matrix is positive definite.
For the further notation and the definition of viscosity solutions to \eqref{geparabeq}, we refer to Section \ref{sec:Main-results}. 

In \cite{fengps22}  we considered second order Sobolev regularity of the parabolic $p$-Laplace equation
\begin{equation} \label{eq:parabolic-p-Laplace}
    u_t-\Delta_p u=0
\end{equation}
where
$\Delta_pu:=\diverg(|Du|^{p-2}Du)$
is the $p$-Laplace operator. 
Notice that, in the special case $\gamma=p-2$, equation \eqref{geparabeq} can be formally, and also rigorously by \cite{juutinenlm01}, rewritten as \eqref{eq:parabolic-p-Laplace}.   
One of the key tools is the fundamental inequality (the name stems from Dong, Peng, Zhang and Zhou  \cite{dongpzz20} for a related inequality)
\begin{align}\label{eq:fundineq}
    |Du|^4|D^2u|^2
    \geq 
    2|Du|^2|D^2uDu|^2+\frac{(|Du|^2\Delta u-\il u)^2}{n-1}-(\il u)^2
\end{align}
which holds for any smooth function $u$ as shown by Sarsa in \cite{sarsa20}. Curiously, in \cite{fengps22} it was sufficient to use the above inequality in a simpler form just estimating $(|Du|^2\Delta u-\il u)^2\ge 0$  on the right hand side. With the general equation in this paper, we use the inequality in the full generality. 
A natural approach to obtain second order Sobolev estimates is to differentiate (\ref{geparabeq}), multiply the equation with suitable quantities containing gradients, and manipulate in a suitable way. Thus, among other terms, one can obtain terms in divergence form, which can be controlled. In the case of (\ref{eq:parabolic-p-Laplace}), one then uses (\ref{eq:fundineq}) in a simple form as explained above and thus gets an upper bound for a quantity containing second derivatives.  Part of the difficulty in dealing with the general equation instead of the $p$-parabolic equation stems from the fact that this approach gives rise to the mixed terms of the type
\begin{align*}
   |Du|^{-\gamma}u_t\ilN u
\end{align*}
which are difficult to handle. 

Another difficulty arises from the fact that of course $u$ is not known to be smooth  a priori when differentiating the equation, and negative powers of the gradient are problematic as the gradient might vanish. A natural approach to these problems is regularizing the equation by adding a small regularization parameter, which removes the singularity. Unfortunately, when differentiating the regularized equation, one gets another set of problematic terms that no longer match the terms in the fundamental inequality. Treating these terms is a subtle issue, and we need to guarantee that a sum of certain terms remain nonnegative by carefully analyzing explicit coefficients of the terms.

In order to analyze the nonnegativity of the problematic terms and their coefficients systematically, we develop several techniques. We interpret the terms and their coefficients, as a quadratic form and derive a range condition for the parameters from the positive definiteness condition of this quadratic form. In order to improve the range obtained in this way, we use a hidden divergence structure. Indeed, suitable mixed terms can actually be written in a divergence form, 
and thus by adding such terms, we can manipulate the coefficients at the cost of adding divergence form terms that can be estimated.  

Some steps, in particular checking that the quadratic form is positive definite,  of the above plan when written down explicitly are quite complicated, and thus for the convenience of the reader we first provide a formal calculation in Section \ref{sec:smcase}, where we assume that the solution is smooth and the gradient nonvanishing. In this case, the above plan gives an optimal (optimality is discussed in Example \ref{counterexample})  a priori estimate (Proposition \ref{prop:smooth}),
\begin{equation}
\label{eq:smooth-est}
       \int_{Q_r}\Big|D(|Du|^{\frac{p-2+s}{2}} Du)\Big|^2dxdt
    \leq
    \frac{C}{r^2}\Big(
    \int_{Q_{2r}}|Du|^{p+s}dxdt
    +
    \int_{Q_{2r}}|Du|^{p+s-\gamma}dxdt
    \Big)    
\end{equation}
in the  range
\begin{align*}
    1<p<\infty, \quad -1<\gamma<\infty\quad \text{and} \quad n\ge 2.
\end{align*}
with the range condition
\begin{align} 
\label{range:expected-2}
    s>\max\Big\{-1-\frac{p-1}{n-1},\gamma+1-p\Big\}.
\end{align}
The left hand side in the above estimate is of the same form as the estimate in \cite{fengps22}. In particular, we may set  $s=2-p$, $s=0$ and  $s=p-2$ giving 
\begin{align*}
    D^2u,\quad  D(\abs{Du}^{\frac{p-2}{2}}Du)\quad \text{ and }\quad D(\abs{Du}^{p-2}Du)
\end{align*} 
as special cases.

Perhaps surprisingly, removing the smoothness assumption and the assumption on the nonvanishing gradient by using the regularized equation turns out to be a problem. In particular, the additional terms resulting from the regularization add to the technical complication of showing that the quadratic form is positive definite. To reduce technical complication partly for expository reasons, we have decided to restrict ourselves to the case $n=2$ in the regularized case. In this context we obtain the following result.
\begin{theorem} \label{resultforwholep}
Let $n=2$. Let $u\colon\Omega_T\rightarrow \R$ be a viscosity solution to the general $p$-parabolic equation \eqref{geparabeq}. If $p$ and $\gamma$ satisfy one of the following conditions:
\begin{itemize}
    \item[(i)] $1<p\leq5$ and $-1<\gamma<1$; or
    \item[(ii)] $1<p<\infty$ and $-1<\gamma<\sqrt{2}-\frac12$,
\end{itemize}
then $D^2u$ exists and belongs to $L^2_{\rm loc}(\Omega_T)$. Moreover, we have the estimate
\begin{align} \label{eq:BoundforSecondDerivatives}
  \int_{Q_{ r}}|D^2u|^2dxdt
 \leq  \frac{C}{r^2}\Big( \int_{ Q_{2r}}  |Du|^2 dxdt + \int_{ Q_{2r}}|Du|^{ 2-\gamma } dxdt   \Big),
\end{align}
where $C=C(p,\gamma)>0$  and $Q_r\subset Q_{2r}\Subset\Om _T$ are concentric parabolic cylinders.
\end{theorem}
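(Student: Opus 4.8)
The plan is to follow the two-stage strategy outlined in the introduction: first establish the a priori estimate \eqref{eq:smooth-est} for smooth solutions with nonvanishing gradient (Proposition \ref{prop:smooth}), and then transfer it to viscosity solutions via regularization. For the first stage, I would differentiate \eqref{geparabeq} with respect to $x_k$, multiply by a quantity of the form $|Du|^{s}u_{x_k}\vp^2$ for a test function $\vp$ and sum over $k$; after integrating by parts one obtains, schematically, a "good" term $\int |D(|Du|^{\frac{p-2+s}{2}}Du)|^2\vp^2$ plus divergence-form terms controlled by the right-hand side of \eqref{eq:smooth-est}, plus the problematic mixed terms involving $|Du|^{-\gamma}u_t\ilN u$ and the various second-derivative combinations appearing in the fundamental inequality \eqref{eq:fundineq}. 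The key algebraic step is to collect all of these into a quadratic form in the vector of relevant scalar quantities (roughly $\Delta u$, $\ilN u$, $|D^2uDu|/|Du|$, and $|D^2u|$, appropriately normalized) and verify that its coefficient matrix is positive definite precisely under the range condition \eqref{range:expected-2}; the "hidden divergence structure" is used to shift weight between entries of this matrix so as to widen the admissible range.

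For the second stage, which is where Theorem \ref{resultforwholep} lives, I would specialize to $n=2$ and replace \eqref{geparabeq} by a regularized equation in which $|Du|$ is replaced by $(|Du|^2+\eps^2)^{1/2}$ (and similarly for $\ilN u$), so that the coefficients are smooth and nondegenerate; standard parabolic theory then gives smooth solutions $\ue$, and by viscosity-solution stability $\ue \to u$ locally uniformly. Differentiating the regularized equation produces, in addition to the terms handled above, a family of extra $\eps$-dependent terms; the task is to show that the enlarged quadratic form (now with these extra entries) remains positive semidefinite uniformly in $\eps$ under the hypotheses (i) or (ii). With $s$ chosen so that the left-hand side of \eqref{eq:smooth-est} controls $\int_{Q_r}|D^2\ue|^2$ — concretely $s=2-p$, giving exponents $p+s=2$ and $p+s-\gamma=2-\gamma$ on the right, exactly as in \eqref{eq:BoundforSecondDerivatives} — one obtains a bound on $\|D^2\ue\|_{L^2(Q_r)}$ uniform in $\eps$, hence weak convergence of a subsequence of $D^2\ue$ in $L^2_{\loc}$ to some $V$. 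Finally, since $D\ue \to Du$ in $L^2_{\loc}$ (from a uniform gradient estimate plus the $L^2$-bound on $D^2\ue$ giving compactness), one identifies $V = D^2u$ in the distributional sense and passes to the limit in \eqref{eq:BoundforSecondDerivatives} using lower semicontinuity of the norm and convergence of the right-hand side.

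The main obstacle, as the introduction itself flags, is verifying the positive (semi)definiteness of the coefficient matrix in the regularized case: the $\eps$-terms do not line up with the fundamental inequality \eqref{eq:fundineq}, so one cannot simply quote \eqref{eq:fundineq} and must instead analyze the explicit $2\times 2$ (or slightly larger) matrix entries as functions of $p$, $\gamma$ and the ratio $|Du|^2/(|Du|^2+\eps^2)$, checking that the determinant and trace conditions hold for all values of that ratio in $(0,1]$. The two parameter regimes (i) and (ii) presumably arise from two different ways of estimating this matrix — one sharp in $\gamma$ near the singular/degenerate borderline and valid for $p$ up to $5$, the other valid for all $p$ but only for a smaller $\gamma$-range — so I expect the proof to split into these two cases at exactly this point. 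A secondary technical point is the boundary/cutoff bookkeeping needed to keep all divergence-form error terms absorbed into $\frac{C}{r^2}$ times the right-hand side of \eqref{eq:BoundforSecondDerivatives}; this is routine given the computations in \cite{fengps22} but must be done carefully because the $|Du|^{2-\gamma}$ term, absent in the pure $p$-parabolic case, is new.
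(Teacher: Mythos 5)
Your outline follows the paper's architecture faithfully — regularize, prove a uniform $L^2$ bound on $D^2\ue$ with $s=2-p$ via a positive-definiteness argument for a quadratic form whose coefficients depend on $\theta=|D\ue|^2/(|D\ue|^2+\epsilon)$, and pass to a weak limit — but the step you defer with ``presumably'' is precisely the mathematical content that produces the ranges (i) and (ii), and without it the theorem is not proved. In the paper this is done by two explicit choices of the weights in the sum $S$ of hidden divergence structures. For range (i) one takes $w_3=w_4=0$, $w_2=2$, and positive definiteness of the resulting $2\times2$ matrix reduces to $(\sqrt{P_\theta}-\sqrt{R_\theta})^2<w_1<(\sqrt{P_\theta}+\sqrt{R_\theta})^2$ uniformly in $\theta$, where $P_\theta=(p-2)\theta+1$ and $R_\theta=1-\gamma\theta$; optimizing over $\theta$ gives $\sup_\theta(\sqrt{P_\theta}-\sqrt{R_\theta})^2=(\sqrt{p-1}-\sqrt{1-\gamma})^2$ and $\inf_\theta(\sqrt{P_\theta}+\sqrt{R_\theta})^2=\min\{4,(\sqrt{p-1}+\sqrt{1-\gamma})^2\}$, and the requirement $4>(\sqrt{p-1}-\sqrt{1-\gamma})^2$ is exactly where $1<p\le5$ comes from. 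For range (ii) one instead activates the $\epsilon$-weighted structures with $w_1=p-\gamma$, $w_2=w_4=2$, $w_3=4-p+\gamma$, chosen so that the mixed term $\Delta_T\ue\,\ilN\ue$ vanishes identically in $\kappa=1-\theta$; positivity then reduces to $c_3+c_4=2(1-\gamma)-2\kappa+2(2+\gamma)\kappa^2>0$ for all $\kappa\in(0,1]$, whose minimum at $\kappa=1/(2(2+\gamma))$ is positive exactly when $\gamma<\sqrt2-\tfrac12$. None of this is recoverable from the generic trace/determinant heuristic you describe; the specific constants $5$ and $\sqrt2-\tfrac12$ have to be exhibited this way.

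A second, smaller gap: ``by viscosity-solution stability $\ue\to u$ locally uniformly'' does not by itself produce or identify the limit. You need (a) compactness, which the paper gets from the uniform spatial $C^{1,\alpha}$ estimates of Imbert--Jin--Silvestre — these are also what bounds the right-hand side of the regularized estimate independently of $\eps$ and makes the $\eps\ln(|D\ue|^2+\eps)$ error terms vanish in the limit — and (b) a reason the limit equals $u$ rather than some other solution, which the paper obtains by posing the regularized equation as a Dirichlet problem with boundary data $u$ on a slightly larger cylinder and invoking the Ohnuma--Sato uniqueness theorem for the limit equation. Without the boundary condition and the uniqueness/comparison step, a subsequential limit need not be $u$.
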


This also implies that time derivative exists as an $L^2$-function, which is not evident directly by the definition.
\begin{corollary}[Time derivative] \label{cor:TimeDerivative-w22}
Let $n=2$. Let $u\colon\Om _T\to\R$ be a  viscosity  solution to the general $p$-parabolic equation \eqref{geparabeq}. If $p$ and $\gamma$ satisfy one of the following conditions:
\begin{itemize}
    \item[(i)] $1<p\leq5$ and $0\leq\gamma<1$; or
    \item[(ii)] $1<p<\infty$ and $0\leq\gamma<\sqrt{2}-\frac12$,
\end{itemize}
then the time derivative $u_t$ exists as a function and
$u_t\in L^2_{\loc}(\Om _T)$.
\end{corollary}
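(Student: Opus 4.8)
The plan is to obtain $u_t$ by reading it off directly from the equation \eqref{geparabeq}. Note first that the hypotheses (i) and (ii) stated here are precisely the hypotheses (i) and (ii) of Theorem~\ref{resultforwholep} intersected with $\{\gz\ge0\}$, so Theorem~\ref{resultforwholep} applies and yields $D^2u\in L^2_{\loc}(\Om_T)$ together with the estimate \eqref{eq:BoundforSecondDerivatives}. Combining this with the local Lipschitz regularity (in the spatial variable) of viscosity solutions to \eqref{geparabeq} — see the results of Imbert--Jin--Silvestre and of Parviainen--V\'azquez referred to in the introduction — so that $Du\in L^\infty_{\loc}(\Om_T)$, and using that $\gz\ge0$, we obtain on every parabolic cylinder $Q\Subset\Om_T$ the pointwise almost everywhere bound
\[
    \abs{\abs{Du}^{\gz}\big(\Delta u+(p-2)\ilN u\big)}
    \le\big(\sqrt{n}+\abs{p-2}\big)\,\abs{Du}^{\gz}\abs{D^2u}
    \le\big(\sqrt{n}+\abs{p-2}\big)\,\norm{Du}_{L^\infty(Q)}^{\gz}\,\abs{D^2u},
\]
where we used $\abs{\Delta u}\le\sqrt n\,\abs{D^2u}$ and $\abs{\ilN u}=\abs{Du}^{-2}\abs{\la Du,D^2uDu\ra}\le\abs{D^2u}$. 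Hence the right-hand side of \eqref{geparabeq} belongs to $L^2_{\loc}(\Om_T)$, and it only remains to verify that it is the distributional time derivative of $u$.

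For this I would use the regularization scheme underlying the proof of Theorem~\ref{resultforwholep}: the smooth approximants $\ue$ solve a regularized version of \eqref{geparabeq} classically, so $\ue_t=F^{\ez}$ pointwise, where $F^{\ez}$ is the corresponding regularized right-hand side; moreover $\ue\to u$ locally uniformly, $Du^{\ez}\to Du$ almost everywhere with $\sup_{\ez}\norm{Du^{\ez}}_{L^\infty(Q)}<\infty$ for every $Q\Subset\Om_T$, and $D^2u^{\ez}\rightharpoonup D^2u$ weakly in $L^2_{\loc}(\Om_T)$ along a subsequence. Since $\gz\ge0$, the bound above applies to $F^{\ez}$ as well, uniformly in $\ez$, so $\{F^{\ez}\}$ is bounded in $L^2_{\loc}(\Om_T)$. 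Fixing $\phi\in C^\infty_0(\Om_T)$ and passing to the limit in
\[
    \int_{\Om_T}\ue\,\phi_t\ud x\ud t=-\int_{\Om_T}F^{\ez}\,\phi\ud x\ud t,
\]
the left-hand side converges to $\int_{\Om_T}u\,\phi_t\ud x\ud t$; on the right-hand side one writes $F^{\ez}\phi$ as the product of $D^2u^{\ez}$ with a factor built from $\phi$ and the regularized coefficient $(\abs{Du^{\ez}}^2+\ez^2)^{\gz/2}$, which is uniformly bounded with fixed compact support and converges almost everywhere, hence strongly in $L^2$. A product of a strongly convergent and a weakly $L^2$-convergent sequence then passes to the limit, and one identifies $\lim_{\ez\to0}F^{\ez}=\abs{Du}^{\gz}(\Delta u+(p-2)\ilN u)$. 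Therefore $\int_{\Om_T}u\,\phi_t\ud x\ud t=-\int_{\Om_T}\abs{Du}^{\gz}(\Delta u+(p-2)\ilN u)\,\phi\ud x\ud t$ for all such $\phi$, i.e.\ $u_t=\abs{Du}^{\gz}(\Delta u+(p-2)\ilN u)\in L^2_{\loc}(\Om_T)$, with a quantitative bound following from \eqref{eq:BoundforSecondDerivatives} and the Lipschitz estimate.

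The main obstacle is the passage to the limit in the nonlinear right-hand side: since $D^2u^{\ez}$ converges only weakly in $L^2_{\loc}$, one must know that the accompanying coefficient converges strongly — equivalently, that $Du^{\ez}\to Du$ almost everywhere and is uniformly locally bounded — which is where the local Lipschitz estimate, and the hypothesis $\gz\ge0$ (as opposed to merely $\gz>-1$ in Theorem~\ref{resultforwholep}), enters: for $\gz<0$ the coefficient $(\abs{Du^{\ez}}^2+\ez^2)^{\gz/2}$ need not stay locally bounded. If one prefers not to invoke an external Lipschitz estimate, it suffices to observe that the uniform local bound on $Du^{\ez}$ is already produced in the course of the proof of Theorem~\ref{resultforwholep}, and that one may instead pass to the limit in $F^{\ez}$ only in $L^1_{\loc}(\Om_T)$, which still identifies $u_t$ as a distribution and then, together with the first paragraph, gives $u_t\in L^2_{\loc}(\Om_T)$.
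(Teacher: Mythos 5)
Your proposal is correct and follows essentially the same route as the paper: bound $\ue_t=F^\ez$ pointwise by $C(|D\ue|^2+\epsilon)^{\gamma/2}|D^2\ue|$, use $\gamma\ge 0$ together with the uniform local gradient bound and the regularized second-order estimate to get a uniform $L^2_{\loc}$ bound, extract a weak limit, and identify it with $u_t$ by integrating by parts in time. The only difference is that you additionally identify the limit of the nonlinear right-hand side via a strong-times-weak convergence argument; this step is not needed (and is the only delicate point, near $\{Du=0\}$), since the weak $L^2$ limit of $\ue_t$ is already identified with $u_t$ by the time-integration by parts alone, which is exactly what the paper does.
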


At least to some extent the range condition in Theorem \ref{resultforwholep} is an artifact as we explain later. It would be interesting to know whether the theorem is valid in the whole range of parameters. 

Next we review the known regularity results of equation \eqref{geparabeq} and explain how our results fit into the existing literature.
If $\gamma=p-2$, then equation \eqref{geparabeq} is the parabolic $p$-Laplace equation \eqref{eq:parabolic-p-Laplace}. For the regularity theory of weak solutions to \eqref{eq:parabolic-p-Laplace} we refer to the monograph of DiBenedetto \cite{dibenedetto93}. In particular, if $u$ is a continuous weak solution to \eqref{eq:parabolic-p-Laplace}, then $u\in C^\alpha_\loc$ and $Du\in C^\beta_\loc$ for some $0<\alpha,\beta<1$. 

Moreover, Lindqvist  \cite{lindqvist08} showed in the degenerate case $2<p<\infty$ that 
$$
D(|Du|^{\frac{p-2}{2}}Du)\in L^2_\loc,
$$ and further that 
$$
D(|Du|^{p-2}Du)\in L^{\frac{p}{p-1}}_\loc.
$$ 
The singular case is treated in \cite{lindqvist17}.
The results then imply the existence of time derivative $u_t$ as a function in suitable spaces similar to Corollary~\ref{cor:TimeDerivative-w22}.  In the case of the obstacle problem the existence of the time derivative was established in \cite{lindqvist12}.
Dong, Peng, Zhang and Zhou \cite{dongpzz20} gave a proof that $D^2u\in L^2_\loc$ with a sharp range $1<p<3$. 
This range of $p$ can be recovered from assumption (i) of Theorem \ref{resultforwholep}. In the global case, estimates for $D(|Du|^{p-2}Du)$ have been derived by Cianchi and Maz'ya in \cite{cianchim19}. 

If $\gamma=0$, equation \eqref{geparabeq} is the normalized parabolic $p$-Laplace equation
\begin{equation} \label{eq:parabolic-p-Laplace-normalized}
    u_t-\plN u=0
\end{equation}
where $\plN u:=\Delta u+(p-2)\Delta_\infty^Nu$ is the normalized or game theoretic $p$-Laplace operator. This equation arises from a two-player stochastic game with a fixed running time, see Manfredi, Parviainen and Rossi \cite{manfredipr10}, or from image processing, see Does \cite{does11}. Banerjee and Garofalo \cite{banerjeeg13,banerjeeg15} studied the potential theoretic aspects and boundary regularity of the normalized p-Laplacian evolution. These papers also contain Lipschitz regularity results for solutions to the normalized $p$-parabolic equation. 
The regularity method in  \cite{manfredipr10} is global whereas in \cite{parviainenr16} a local game theoretic method is applied in this context.
Later Jin and Silveste \cite{jins17} established $C_{\loc}^{1,\alpha}$-regularity in space and  $C_{\loc}^{0,\frac{1+\alpha}{2}}$-regularity in time. In  \cite{hoegl20}, H{\o}eg and Lindqvist studied the second order Sobolev regularity for the normalized $p$-parabolic equation and showed that when $\frac65<p<\frac{14}5$, the second order spatial derivatives $D^2u$ and the time derivative $u_t$ belong to $L^2_\loc$. Moreover, they also proved that when $1<p<2$, $u_t$ also belongs to $L^2_\loc$. In \cite{attouchip18}, $C_{\loc}^{1,\alpha}$-regularity was established to the normalized $p$-parabolic equation with a source term.  The work of Dong, Peng, Zhang and Zhou \cite{dongpzz20} also applies to the normalized $p$-parabolic equation; in this case they obtained $D^2u\in L^{2+\delta}_\loc$ and $u_t\in L^{2+\delta}_\loc$ for some $\delta>0$ if $1<p<3+\frac{2}{n-2}$. The key result of  \cite{dongpzz20} with $\delta=0$ can be recovered from assumption (ii) of Theorem \ref{resultforwholep}. Recently Andrade and Santos \cite{andrades22} established improved Sobolev regularity estimates when $p$ is close to 2. 

As stated, \eqref{geparabeq} is in non-divergence form and can be highly degenerate or singular. Thus even defining viscosity solutions in such a way that existence and uniqueness can be obtained becomes a nontrivial issue. This was done by Ohnuma and Sato in \cite{ohnumas97}, see also Giga's monograph \cite{giga06}. For viscosity solutions to the general equation \eqref{geparabeq}, where $1<p<\infty$ and $-1<\gamma<\infty$ are allowed to be independent of each other, Imbert, Jin and Silvestre \cite{imbertjs19} proved in particular that $Du\in C^\alpha_\loc$ for suitable $0<\alpha<1$. In \cite{parviainenv20}, Parviainen and V\'azquez  established Harnack's inequality and asymptotic behaviour by using the fact that for radial solutions equation \eqref{geparabeq} is equivalent to a divergence form equation but in fictitious dimension. 
 Attouchi \cite{attouchi20} in the degenerate case and Attouchi-Ruosteenoja \cite{attouchir20} in the singular case established  spatial $C^{1,\alpha}_\loc$-regularity for an equation of type \eqref{geparabeq} but with a source term. The elliptic Harnack's inequality in the singular range was obtained in \cite{kurkinenps}. 

This article is organized as follows. 
In Section \ref{sec:Main-results} we provide the necessary preliminaries.
In Section \ref{sec:Plan} we explain 
the ideas of the proof of Theorem \ref{resultforwholep}.
In Section \ref{sec:Toolbox} we state several auxiliary lemmas needed in the proofs, including the fundamental inequality \eqref{eq:fundineq}.
Sections \ref{sec:smcase} and \ref{sec:full} are parallel to each other. In the former, we provide the formal calculation. In the latter, we provide a similar calculation in a regularized setting, which eventually yields Theorem \ref{resultforwholep}.
In Section \ref{sec:ProofofMain} we prove Theorem \ref{resultforwholep} and Corollary \ref{cor:TimeDerivative-w22}. Some of the proofs for the technical lemmas are postponed to the appendix.
\section{Preliminaries} \label{sec:Main-results}
We use the following notation. 
Let $\Om\subset \Rn$, $n\ge 2$, be a domain and define the cylinder 
$$ \Om_T:=\Om\times(0,T). $$ 
If $U$ is compactly contained in $\Om$, i.e.\ $U\subset \Om$ and the closure of $U$ is a compact subset of  $\Om$, we write  $U \Subset \Om$. For $0<t_1<t_2<\infty$, we set
$$ U_{t_1,t_2}:=U\times (t_1,t_2). $$
Moreover, we will use parabolic cylinders of the form 
$$ Q_r(x_0,t_0):=B_r(x_0)\times(t_0-r^2,t_0], $$
where $B_r(x_0)$ denotes the open ball with radius $r>0$ and center point $x_0\in\Om$.
When no confusion arises, we may drop the reference point $(x_0,t_0)$ and write $Q_{r }$. 

Given a function $u=u(x,t)$ of point $x\in\Rn$ and time $t>0$, the spatial gradient of $u$ is denoted by 
$Du=(u_{x_1},\ldots,u_{x_n})$, and the time derivative by $u_t$. The Hessian matrix of $u$ is denoted by $D^2u=(u_{x_ix_j})_{i,j=1}^n$. The Laplacian of $u$ is given by
\begin{align*}
\Delta u:=\sum_{i=1}^nu_{x_ix_i}    
\end{align*}
and the infinity Laplacian by
\begin{align*}
    \il u:=\sum_{i,j=1}^nu_{x_i}u_{x_j}u_{x_ix_j}=\la Du,D^2uDu\ra
\end{align*}
where $\la \cdot,\cdot\ra$ stands for the inner product in $\Rn$.
The normalized infinity Laplacian is denoted by 
\begin{align*}
\ilN u:=\frac{\il u}{|Du|^2}.
\end{align*}

We study viscosity solutions to the general $p$-parabolic equation
\begin{align}\label{eq:GeParabEq}
u_t-|Du|^{\gamma}\big(\Delta u+(p-2)\ilN u\big)=0\quad\text{in }\Omega_T,
\end{align}
where $1<p<\infty$ and $-1<\gamma<\infty$.
The definition of suitable viscosity solutions to \eqref{eq:GeParabEq}
requires some care because the operator may be singular. Nonetheless, a definition that fits our needs can be found in \cite{ohnumas97}. First set
\begin{equation}
\label{eq:F-def}
F(Du,D^2 u):=\abs{Du}^{\gamma}\big(\Delta u+(p-2)\Delta_{\infty}^{N} u\big)
\end{equation}
whenever $Du\neq 0$. We define $\mathcal F$ to be a set of functions $f\in C^{2}([0,\infty))$ such that
\[
\begin{split}
 f(0)=f'(0)=f''(0)=0,\ f''(r)>0 \text{ for all }r>0,
\end{split}
\]
and moreover we require for  $g(x):=f(\abs x)$ that
\[
\begin{split}
\lim_{x\to 0,x\neq 0}F(Dg(x),D^2g(x))=0.
\end{split}
\]
Further, let
\[
\begin{split}
\Sigma=\{ \sigma \in C^{1}(\R) \,:\, \sigma \text{ is even},\, \sigma(0)=\sigma'(0)=0,\text{ and }\sigma(r)>0 \text{ for all } r\neq 0 \}.
\end{split}
\]

\begin{definition} \label{def:admissible}
A function $\vp \in C^{2}(\Omega_T)$ is admissible if for any $(x_0,t_0)\in \Omega_T$ with $D\vp(x_0,t_0)=0$, there are $\delta>0$, $f\in \mathcal F$ and $\sigma \in \Sigma$ such that
\[
\abs{\vp(x,t)-\vp(x_0,t_0)-\vp_t(x_0,t_0)(t-t_0)}\le f(\abs{x-x_0})+\sigma(t-t_0)
\]
for all $(x,t)\in B_{\delta}(x_0)\times (t_0-\delta,t_0+\delta)$.
\end{definition}

If $D\vp\neq 0$, a $C^2$-function is automatically admissible.

\begin{definition} \label{eq:from-below}
We say that $\vp$ touches $u$ at $(x_0,t_0)\in \Omega_T$ (strictly) from below
if
\begin{enumerate}
\item[(1)] $u(x_0,t_0)=\vp(x_0,t_0)$, and
\item[(2)] $u(x,t)>\vp(x,t)$ for all $(x,t)\in \Omega_T$ such that $(x,t)\neq (x_0,t_0)$.
\end{enumerate}
\end{definition}

The definition for touching (strictly) from above is analogous.

\begin{definition} \label{def:visc}
A function $u:\Omega_T\to \R\cup \{\infty\}$ is a viscosity supersolution  to \eqref{eq:GeParabEq} if
\begin{enumerate}
\item[(i)] $u$ is lower semicontinuous,
\item [(ii)] $u$ is finite in a dense subset of $\Omega_T$,
\item [(iii)]for all admissible $\vp\in C^{2}(\Omega_T)$ touching $u$ at $(x_0,t_0)\in \Omega_T$ \ from below
\[
\begin{split}
\begin{cases}
\vp_t(x_0,t_0)-F(D\vp(x_0,t_0),D^2\vp(x_0,t_0))\ge 0 & \text{if }D\vp(x_0,t_0)\neq 0,\\
\vp_t(x_0,t_0)\ge 0 &  \text{if }D\vp(x_0,t_0)= 0.
\end{cases}
\end{split}
\]
\end{enumerate}
\end{definition}

The definition of a subsolution $u: \Omega_T\to \R\cup \{-\infty\}$ is analogous except that we require upper semicontinuity, touching from above, and we reverse the inequalities above: in other words if $-u$ is a viscosity supersolution.  If a continuous function is both a viscosity super- and subsolution, it is a {\sl viscosity solution.}

It is shown in \cite{juutinenlm01} that if $\gamma= p-2>-1$, then the above notion coincides with the notion of $p$-super/subparabolic functions, having a direct connection to the distributional weak super/subsolutions as well. Moreover, if $\gamma\ge 0$, then viscosity solutions can be defined in a standard way by using semicontinuous envelopes, see Proposition 2.2.8 in \cite{giga06}. 
\section{Plan of proof} \label{sec:Plan}
In this section we explain the idea of the proof of Theorem \ref{resultforwholep} and our plan of the proof.
\subsection{Derivation of a basic estimate}
In order to prove second order estimates, we first derive a key basic estimate \eqref{eq:RoadmapBasicIdentity} (or actually equality at this point). To this end, we regularize the original equation \eqref{geparabeq} and consider
\begin{equation} \label{eq:RoadmapRegularized}
    \ue_t-(\abs{D\ue}^2+\epsilon)^{\gamma/2}\Big(\Delta\ue+(p-2)\frac{\il\ue}{\abs{D\ue}^2+\epsilon}\Big)=0
\end{equation}
for small $\epsilon>0$. Solutions to this equation are smooth according to the standard theory.
We differentiate equation \eqref{eq:RoadmapRegularized} with respect to $x_k$, $k=1,\ldots,n$, and find that the spatial partial derivatives $\ue_{x_k}$, $k=1,\ldots,n$, solve the equation
\begin{equation} \label{eq:RoadmapRegularizedLinearization}
\begin{aligned}
    &(|D\ue|^2+\epsilon)^{\frac{p-2-\gamma}{2}}(\ue_{x_k})_t
    -\diverg\big((|D\ue|^2+\epsilon)^{\frac{p-2}{2}}AD\ue_{x_k}\big) \\
    &\quad
    +(p-2-\gamma)(|D\ue|^2+\epsilon)^{\frac{p-4-\gamma}{2}}\ue_t\la D\ue,D\ue_{x_k}\ra=0
\end{aligned}
\end{equation}
where
$$ A=I+(p-2)\frac{D\ue\otimes D\ue}{|D\ue|^2+\epsilon} $$
is a uniformly positive definite $n\times n$-matrix.
Here $I$ denotes the identity matrix.

We continue with the intention to study the derivatives of  $|Du|^{\frac{p-2+s}{2}}Du$; in particular the choice $s=2-p$ corresponds to $D^2u$. 
We multiply the differentiated equation 
\eqref{eq:RoadmapRegularizedLinearization} by $(|D\ue|^2+\epsilon)^{s/2}\ue_{x_k}$ and obtain
\begin{equation} \label{eq:RoadmapRegularizedLinearization-Mul}
\begin{aligned}
    &(|D\ue|^2+\epsilon)^{\frac{p-2-\gamma+s}{2}}\ue_{x_k}(\ue_{x_k})_t 
    -(|D\ue|^2+\epsilon)^{s/2}\ue_{x_k}\diverg\big((|D\ue|^2+\epsilon)^{\frac{p-2}{2}}AD\ue_{x_k}\big) \\
    &\quad
    +(p-2-\gamma)(|D\ue|^2+\epsilon)^{\frac{p-4-\gamma+s}{2}}\ue_t\la D\ue,D\ue_{x_k}\ra \ue_{x_k}=0.
\end{aligned}
\end{equation}
Using the chain rule
$$\ue_{x_k}(\ue_{x_k})_t =\frac{1}{2}\Big((\ue_{x_k})^2+\frac{\ez}{n}\Big)_t,$$
and summing \eqref{eq:RoadmapRegularizedLinearization-Mul} over $k=1,\ldots,n$ gives that 
\begin{equation} \label{eq:RoadmapRegularizedLinearization-Mul-Sum}
\begin{aligned}
    &\frac{\big((|D\ue |^2+\epsilon)^{\frac{p+s-\gamma}{2}}\big)_t}{p+s-\gamma} 
    -(|D\ue|^2+\epsilon)^{s/2}\sum_{k=1}^n\ue_{x_k}\diverg\big((|D\ue|^2+\epsilon)^{\frac{p-2}{2}}AD\ue_{x_k}\big) \\
    &\quad
    +(p-2-\gamma)(|D\ue|^2+\epsilon)^{\frac{p-2-\gamma+s}{2}}\ue_t\frac{\il \ue}{|D\ue|^2+\epsilon}=0.
\end{aligned}
\end{equation}
Observing that
\begin{align*}
    &\diverg\big((|D\ue|^2+\epsilon)^{\frac{p-2+s}{2}}AD^2\ue D\ue \big)\\
    =&\sum_{k=1}^n\diverg\Big(\big((|D\ue|^2+\epsilon)^{s/2} \ue_{x_k} \big)\big((|D\ue|^2+\epsilon)^{\frac{p-2}{2}}A D\ue_{x_k}\big)\Big)\\
    =&(|D\ue|^2+\epsilon)^{s/2}\sum_{k=1}^n\ue_{x_k}\diverg\big((|D\ue|^2+\epsilon)^{\frac{p-2}{2}}AD\ue_{x_k}\big)\\
    &+(|D\ue|^2+\epsilon)^{\frac{p-2+s}{2}} 
    \Big\{|D^2\ue|^2+(p-2+s)\frac{|D^2\ue D\ue|^2}{|D\ue|^2+\epsilon}
    +s(p-2)\frac{(\il \ue)^2}{(|D\ue|^2+\epsilon)^2}
    \Big\},
\end{align*}
we obtain the identity
\begin{equation} \label{eq:RoadmapBasicIdentity}
\begin{aligned}
    &(|D\ue|^2+\epsilon)^{\frac{p-2+s}{2}}
    \Big\{|D^2\ue|^2+(p-2+s)\frac{|D^2\ue D\ue|^2}{|D\ue|^2+\epsilon}
    +s(p-2)\frac{(\il \ue)^2}{(|D\ue|^2+\epsilon)^2} \\
    &\quad
    +(p-2-\gamma)(|D\ue|^2+\epsilon)^{-\gamma/2}\ue_t\frac{\il \ue}{|D\ue|^2+\epsilon} \Big\} \\
    &=
    \diverg\big((|D\ue|^2+\epsilon)^{\frac{p-2+s}{2}}AD^2\ue D\ue \big)
    -\frac{\big((|D\ue |^2+\epsilon)^{\frac{p+s-\gamma}{2}}\big)_t}{p+s-\gamma}.
\end{aligned}
\end{equation}
Here we assume that $s\neq\gamma-p$. This is not restrictive, because eventually such value of $s$ violates the resulting range condition  \eqref{range:expected-2} in any case. It is important that the terms on the right hand side are in divergence form and can thus be well estimated.  
An important step towards the desired result would be a pointwise inequality 
\begin{equation} \label{eq:Roadmap-Goal-Inequality}
    (|D\ue|^2+\epsilon)^{\frac{p-2+s}{2}}
    |D^2\ue|^2
    \lesssim
    \diverg\big((|D\ue|^2+\epsilon)^{\frac{p-2+s}{2}}AD^2\ue D\ue \big)
    -\frac{\big((|D\ue |^2+\epsilon)^{\frac{p+s-\gamma}{2}}\big)_t}{p+s-\gamma},
\end{equation} 
which then could be integrated to obtain the final result
and for this we need to estimate the excess terms on the left hand side of \eqref{eq:RoadmapBasicIdentity}. 
\subsection{Formal calculation for smooth solutions with a nonvanishing gradient} \label{subs:Roadmap-smooth}
Compared to our earlier work \cite{fengps22} where we treated the case $\gamma=p-2$, we now have two extra difficulties for the general case $-1<\gamma<\infty$.
The first difficulty arises from the fourth term on the left hand side of \eqref{eq:RoadmapBasicIdentity}, that is,
\begin{equation} \label{eq:Mixed}
    (p-2-\gamma)(|D\ue|^2+\epsilon)^{-\gamma/2}\ue_t\frac{\il \ue}{|D\ue|^2+\epsilon}.
\end{equation}
Note that this mixed term vanishes if $\gamma=p-2$. In general we regard the term mixed in the sense that we cannot determine its sign by the sign of the coefficient $p-2-\gamma$.

We first discuss the difficulty of mixed terms in the formal case with $\eps=0$, and denote a solution by $u$. In this case, we assume in addition that $Du\neq0$.
As indicated above, we would like to estimate the excess term in \eqref{eq:RoadmapBasicIdentity} and obtain an estimate for $|Du|^{p-2+s}|D^2u|^2$ with the range \eqref{range:expected-2}. To this end, we write the fundamental inequality \eqref{eq:fundineq} in the form 
\begin{align*}
    2|D_T|Du||^2+\frac{(\Delta_T u)^2}{n-1}+(\ilN u)^2\le|D^2u|^2
\end{align*}
and employ it in identity \eqref{eq:RoadmapBasicIdentity} on the term $|D u|^{p-2+s}
    |D^2 u|^2$
to obtain that
\begin{equation} \label{eq:Roadmap-inequality-formal} 
\begin{aligned}
    &|Du|^{p-2+s}
    \Big\{\frac{1}{n-1}(\Delta_Tu)^2+(p+s)|D_T|Du||^2+(p-1)(s+1)(\ilN u)^2 \\
    &\quad
    +(p-2-\gamma)\abs{Du}^{-\gamma}u_t\ilN u\Big\} \\
    &\leq
    \diverg\big(|Du|^{p-2+s}AD^2uDu\big)
    -\frac{\big(\abs{Du}^{p+s-\gamma}\big)_t}{p+s-\gamma},
\end{aligned}
\end{equation}
where 
\begin{equation} \label{eq:Roadmap-decomp-1}
    \abs{D_T\abs{Du}}^2:=\frac{|D^2uDu|^2}{|Du|^2}-(\ilN u)^2
    \quad\text{and}\quad
    \Delta_T u:=\Delta u-\ilN u.
\end{equation}
Note that $|D_T|Du||^2\geq0$. Sometimes $\Delta_T u$ is called the normalized $1$-Laplacian for the obvious reason.

Except the mixed term that is the last term on the left hand side in \eqref{eq:Roadmap-inequality-formal}, the nonnegativity of other terms in the left hand side of \eqref{eq:Roadmap-inequality-formal} can be easily obtained by the restriction $s>-1$. 
In order to develop a systematic way of checking nonnegativity of the mixed term utilizing other terms, we use equation \eqref{geparabeq} to rewrite
$$\abs{Du}^{-\gamma}u_t\ilN u=\Delta_Tu\ilN u+(p-1)(\ilN u)^2, $$
and view the mixed term $\Delta_Tu\ilN u$ as a part of a quadratic form of $\Delta_T u$ and $\ilN u$.
That is, we consider
\begin{equation} \label{eq:Roadmap-naive-Q}
\begin{aligned}
    Q:
    &=\frac{1}{n-1}(\Delta_Tu)^2+(p-1)(p-1+s-\gamma)(\ilN u)^2+(p-2-\gamma)\Delta_Tu\ilN u \\
    &=:\la \bar{x},M\bar{x}\ra,
\end{aligned}
\end{equation}
where $\bar{x}:=(\Delta_Tu,\ilN u)^T\in\R^2$ and
$$ 
M=
\begin{bmatrix}
   \displaystyle{\frac{1}{n-1} }& \displaystyle{\frac{1}{2}}(p-2-\gamma)\\
   \displaystyle{\frac{1}{2}(p-2-\gamma)} & (p-1)(p-1+s-\gamma)
\end{bmatrix}
$$
is a symmetric $2\times 2$-matrix.

It turns out that in order to derive the desired estimate, it suffices to ensure along with few other conditions that the quadratic form $Q$ is strictly positive in $\R^2\backslash\{0\}$, that is, $M$ is positive definite. 
However, the range condition in \eqref{range:expected-2}
does not suffice to guarantee that the  positive definiteness of $Q$, hence we need to improve the estimate. 
We employ the following observation: If $q>1$, then 
\begin{equation}\label{eq:GoodStructureofqLaplacian}
   \begin{aligned}
    u_t |Du|^{q-2}\Delta_q^N u=u_t\diverg\big(|Du|^{q-2}Du\big)
    &=
    \diverg(u_t|Du|^{q-2}Du)-\frac{(|Du|^{q})_t}{q}
\end{aligned} 
\end{equation}
 holds for any smooth function $u$ with nonvanishing gradient.
In other words, the quantity on the left hand side is a `good term' with a hidden divergence structure.

It is easier to utilize this observation with inequality \eqref{eq:Roadmap-inequality-formal}, if we rewrite the right hand side of that inequality using equation \eqref{geparabeq}. To be more precise,
\begin{equation}\label{eq:Plan-good-structure}
\begin{aligned}
 \diverg&\big(|Du|^{p-2+s }AD^2uDu\big)
    -\frac{\big(\abs{Du}^{p +s-\gamma}\big)_t}{p +s-\gamma} \\
 &   =
\diverg\big(|Du|^{p-2+s }(D^2uDu-\Delta uDu)\big)
+u_t\diverg\big(|Du|^{p-2+s-\gamma}Du\big),
\end{aligned}    
\end{equation}
where the last term now matches with \eqref{eq:GoodStructureofqLaplacian} setting $q:=p+s-\gamma$. 
On the other hand, for a solution $u$, by equation \eqref{geparabeq}, and by the definition of normalized $q$-Laplacian $\Delta_q^Nu$, 
one has
\begin{align*}
u_t=|Du|^{\gamma}(\Delta_Tu+(p-1)\ilN u), \quad \text{and}\quad
\Delta_{p+s-\gamma}^Nu=\Delta_Tu+(p-1+s-\gamma)\ilN u 
\end{align*}
and thus
\begin{equation}\label{eq:to-quadratic-form}
\begin{aligned}
&u_t\diverg\big(|Du|^{p-2+s-\gamma}Du\big)\\
=&|Du|^{\gamma}\big(\Delta_Tu+(p-1)\ilN u\big)\cdot|Du|^{p-2+s-\gamma}\cdot\big(\Delta_Tu+(p-1+s-\gamma)\ilN u\big)  \\
 =&|Du|^{p-2+s }\Big\{(\Delta_Tu)^2+(2 p-2+s-\gamma) \Delta_Tu\ilN u+(p-1)(p-1+s-\gamma)(\ilN u)^2\Big\}. 
\end{aligned}
\end{equation}

The idea is to add $u_t\diverg\big(|Du|^{p-2+s-\gamma}Du\big)$ with a suitable weight on both sides of \eqref{eq:Roadmap-inequality-formal}: then by the above equation, it produces new coefficients on the left hand side that can be utilized later to get better range, and controllable terms on the right hand side by \eqref{eq:Plan-good-structure}. We also add another positive weight
by using 
\begin{equation}\label{eq:to-use-funineq}
\begin{aligned}
    |Du|^{p-2+s}&
    \Big\{|D^2 u|^2-(\Delta u)^2
    +(p-2+s)\frac{|D^2uDu|^2}{|Du|^2}-(p-2+s )\Delta u\ilN u\Big\} \\
    =&
    \diverg\big(|Du|^{p-2+s}(D^2uDu-\Delta uDu)\big)
\end{aligned}
\end{equation}
from Lemma \ref{lem:regularized-divergence-structure} below which holds for any smooth function with nonvanishing gradient.
This allows us to obtain simplified coefficients in intermediate steps. Thus we obtain 
\begin{equation} \label{eq:Roadmap-weighted-ineq-formal}
\begin{aligned}
    &|Du|^{p-2+s }
    \Big\{\Big(w_2- \frac{n-2}{n-1} w_1\Big)(\Delta_T u)^2 
    +w_1(p +s )\abs{D_T\abs{Du}}^2 \\
    &\quad
    +w_2(p-1)(p-1+s-\gamma)(\ilN u)^2+\big(w_2(2p-2+s-\gamma)-w_1(p +s )\big)\Delta_Tu\ilN u\Big\} \\
    &\leq
    w_1\diverg\big(|Du|^{p-2+s }(D^2uDu-\Delta uDu)\big)
    +w_2 u_t\diverg\big(|Du|^{p-2+s-\gamma}Du\big),
\end{aligned}
\end{equation}
which reduces to \eqref{eq:Roadmap-inequality-formal} if $w_1=1$ and $w_2=1$.
Calculations reveal that if the range condition \eqref{range:expected-2} holds, then the weights $w_1$ and $w_2$ can be adjusted so that the weighted quadratic form 
\begin{align*}
    &\Big(w_2-\frac{n-2}{n-1}w_1\Big)(\Delta_T u)^2 +w_2(p-1)(p-1+s-\gamma)(\ilN u)^2 \\
    &+\big(w_2(2p-2+s-\gamma)-w_1( p +s )\big)\Delta_Tu\ilN u
\end{align*}
is positive in $\R^2\backslash\{0\}$. This positivity in the formal case $\epsilon=0$ is shown in Lemma \ref{lem:smooth-key-lemma}. By Proposition \ref{prop:smooth}, this then implies the desired estimate
\begin{equation} \label{eq:roadmap-smooth-integral-estimate}
    \int_{Q_r}\abs{D(\abs{Du}^{\frac{p-2+s }{2}} Du)}^2dxdt
    \leq
    \frac{C}{r^2}\Big(
    \int_{Q_{2r}}|Du|^{p +s }dxdt
    +
    \int_{Q_{2r}}|Du|^{p+s-\gamma}dxdt
    \Big).
\end{equation}
\sloppy
Heuristically, in order to prove  the above estimate, and setting $s=2-p$ for simplicity, we could have left a small piece of $|D^2 u|^2$ when applying the fundamental inequality for \eqref{eq:Roadmap-inequality-formal}. Then the rest of the terms can be dropped by the above positivity result: in detail this is implemented in Lemma \ref{lem:ad-hoc} also for other values of $s$. The obtained pointwise estimate can then be integrated by parts along with a cutoff function to get Proposition \ref{prop:smooth}. 
\subsection{Solutions without smoothness assumptions and regularized equation} \label{sec:eps-plan}
The second difficulty, which is related to the regularization, is that the left hand side of \eqref{eq:RoadmapBasicIdentity} consists of regularized versions of second order derivative quantities,
$$ \frac{|D^2\ue D\ue|^2}{|D\ue|^2+\epsilon} \quad\text{and}\quad \frac{\il\ue}{|D\ue|^2+\epsilon}, $$
whereas employing the fundamental inequality \eqref{eq:fundineq} results in quantities like
$$ \frac{|D^2\ue D\ue|^2}{|D\ue|^2}\quad\text{and}\quad \ilN \ue. $$
This mismatch causes that some of the formal calculations do not work as such but have further complications: in particular positive definiteness of the quadratic form becomes an issue.

For a certain range of parameters, the main result
is obtained by a straightforward generalization of the formal calculation ($\epsilon=0$) in the previous section. 
However, in the process of extending the range, we consider
\begin{equation}\label{eq:Roadmap-S-with-errorterms-alpha-is-0}
\begin{aligned}
    S
    &:=
    w_1\diverg\big((|D\ue|^2+\epsilon)^{\frac{p-2+s}{2}}(D^2\ue D\ue-\Delta \ue D\ue)\big) \\
    &\quad
    +w_2\ue_t\diverg\big((|D\ue|^2+\epsilon)^{\frac{p-2+s-\gamma}{2}}D\ue\big) \\
    &\quad
    +w_3\epsilon\diverg\big((|D\ue|^2+\epsilon)^{\frac{p-2+s}{2}-1}(D^2\ue D\ue-\Delta \ue D\ue)\big) \\
    &\quad
    +w_4\epsilon\ue_t\diverg\big((|D\ue|^2+\epsilon)^{\frac{p-2+s-\gamma}{2}-1}D\ue\big),
\end{aligned}
\end{equation}
where $w_1,w_2,w_3,w_4\in\R$. 
Compared to the right hand side of \eqref{eq:Roadmap-weighted-ineq-formal}, or \eqref{eq:Roadmap-S-plane}, this sum has two additional terms with weights $w_3$ and $w_4$. The latter additional term has a hidden divergence structure, similarly to \eqref{eq:GoodStructureofqLaplacian}. These divergence structures can be used to adjust the coefficients on the left hand side of the estimate \eqref{eq:Roadmap-weighted-ineq-formal}, and thus to improve the range of parameters.
To be more precise, we denote
\begin{align}
\label{eq:theta-kappa}
    \theta:=\frac{|D\ue|^2}{|D\ue|^2+\epsilon}
\quad\text{and}\quad
\kappa:=1-\theta=\frac{\epsilon}{|D\ue|^2+\epsilon},
\end{align}
and obtain 
$$ \frac{|D^2\ue D\ue|^2}{|D\ue|^2+\epsilon}=\theta \abs{D\abs{D\ue}}^2 \quad\text{and}\quad \frac{\il\ue}{|D\ue|^2+\epsilon}=\theta\ilN \ue. $$
The second mixed term of \eqref{eq:Roadmap-S-with-errorterms-alpha-is-0}
can also be written as a part of the quadratic form as follows 
\begin{equation}\label{eq:to-quadratic-form-error}
\begin{aligned}
&\ez\ue_t\diverg\big((\abs{D\ue}^2+\ez)^{\frac{p-2+s-\gamma}{2}-1}D\ue\big)\\
 =&\theta(\abs{D\ue}^2+\ez)^{\frac{p-2+s }{2}}\Big((\Delta_T\ue)^2+\big((2p-6+s-\gamma)\theta+2 \big)\Delta_T\ue\ilN \ue\\
 &+\big((p-2)\theta+1\big)\big((p-4+s-\gamma)\theta+1 \big)(\ilN \ue)^2\Big)
\end{aligned}
\end{equation}
with weight $w_4$,
where 
\begin{align*}
\ue_t=(|D\ue|^2+\epsilon)^{\gamma/2}\Big(\Delta_T \ue+\big((p-2)\theta+1\big)\ilN\ue\Big)
\end{align*}
by using the regularized equation and recalling the shorthand notation
$\Delta_T \ue:=\Delta \ue-\ilN \ue.$
This will give rise to new coefficients and thus to a better range condition. 

In order to produce new coefficients on the left hand side of \eqref{eq:RoadmapBasicIdentity}, especially for the second order term $(\abs{D\ue}^2+\ez)^{\frac{p-2+s}{2}}\abs{D^2\ue}^2$, and also to improve the range of the parameters,
we add another divergence structure
\begin{equation}\label{eq:to-use-funineq-error}
\begin{aligned}
    &\ez\diverg\big((\abs{D\ue}^2+\ez)^{\frac{p-2+s}{2}-1}(D^2\ue D\ue-\Delta \ue D\ue)\big)\\
    =&\theta(\abs{D\ue}^2+\ez)^{\frac{p-2+s}{2}}
    \Big\{|D^2 \ue|^2-(\Delta \ue)^2
    +(p-4+s)\theta \abs{D\abs{D\ue}}^2\\
    &-(p-4+s)\theta \Delta \ue  \ilN\ue \Big\}.
\end{aligned}
\end{equation}
 Also observe that the above choice of the power $(p-2+s)/2-1$ will be useful in the proof of Lemma \ref{lem:lemma-for-final-estimate} when deriving an upper bound for the left hand side of the estimate, after integration by parts where we estimate $\eps/(|D\ue|^2+\epsilon)\le 1$ and thus the additional $-1$ in the power gets canceled out. Besides, the error terms obtained in Lemmas 4.4 and 4.7 in \cite{dongpzz20} can be seen as special cases of the error terms above.

Then combining \eqref{eq:to-quadratic-form}, \eqref{eq:to-use-funineq}, \eqref{eq:to-quadratic-form-error} and \eqref{eq:to-use-funineq-error} together with definition \eqref{eq:Roadmap-S-with-errorterms-alpha-is-0} of $S$, 
we get
\begin{align}
\label{eq:key-estimate-plan}
   &(|D\ue|^2+\epsilon)^{\frac{p-2+s}{2}}
    \Big\{
    c_1|D^2\ue|^2+c_2|D_T|D\ue||^2
    +(c_3-c_1)(\Delta_T\ue)^2 \\
    &\quad
    +\Big((c_3+c_4)\big((p-2)\theta+1\big)-c_1\Big)(\ilN \ue)^2 \\
    &\quad
    +\Big(c_3\big((p-2)\theta+1\big)+(c_3+c_4)-(2c_1+c_2)\Big) \Delta_T\ue\ilN \ue 
    \Big\}= S,
\end{align}
where $c_1,c_2,c_3$ and $c_4$ depend on $w_1,w_2,w_3, w_4$ and $\theta$ as computed in detail in Section \ref{sec:computations}. Then we again use the fundamental inequality on part of $c_1|D^2\ue|^2$ and find such weights $w_1,w_2,w_3$ and $w_4$ that 
the last three terms on the left hand side can be interpreted as a positive definite quadratic form and thus removed.  
Finally, $S$ on the right hand side can be multiplied by a cutoff function and integrated by parts to get the final estimate. However, the nonnegativity can only be checked in certain ranges, since it needs to hold uniformly for all $\theta\in [0,1)$.
\section{Hidden divergence structures, the key estimate and auxiliary lemmas} \label{sec:Toolbox}
In this section we prove several auxiliary tools. The lemmas in this section will be used to prove estimates for both $\ue$, that solves \eqref{eq:RoadmapRegularized} with $\epsilon>0$, and $u$, that solves \eqref{eq:RoadmapRegularized} with $\epsilon=0$ and $Du\neq 0$. Therefore we state the lemmas in such a generality that applies to both of these cases. 
\subsection{Hidden divergence structures}
In this subsection we gather some useful facts about generic smooth functions.
First, if $u\colon \Omega_T\to\R$, $\Omega_T\subset \R^{n+1}$ is a smooth function, then $|Du|$ is locally Lipschitz continuous and thus, by Rademacher's theorem, differentiable almost everywhere on each time slice. 
Here and in similar occurrences in what follows, we write that $D|Du|$ exists \emph{almost everywhere in space}.

Note that if $(x_0,t_0)\in \Omega_T$ is a space-time point where $|Du|$ is differentiable and $Du(x_0,t_0)=0$, then $D|Du|(x_0,t_0)=0$. Indeed, if we had $D|Du|(x_0,t_0)\neq 0$, then we could find a point $\xi\in \Omega\times\{t_0\}$ (close to $(x_0,t_0)$) such that $|Du|(\xi)<0$, which is obviously impossible.
On the other hand, if $Du(x_0,t_0)\neq 0$ for some $(x_0,t_0)\in \Omega_T$, then $|Du|$ is differentiable at $(x_0,t_0)$ and
$$ D|Du|(x_0,t_0)=\frac{D^2u(x_0,t_0)Du(x_0,t_0)}{|Du(x_0,t_0)|}. $$

For each point in $\Omega_T$ where $Du\neq0$, we fix an orthonormal basis of $\Rn$, $\{e_1,\ldots,e_n\}$,
such that $e_n=\frac{Du}{|Du|}$. Hence we have, for those points where $Du\neq0$,
$$ \frac{D^2uDu}{|Du|}
=\la e_1,D|Du|\ra e_1
+\ldots
+\la e_{n-1},D|Du|\ra e_{n-1}
+\left<\frac{Du}{|Du|},D|Du|\right>\frac{Du}{|Du|}. $$
For those points where $|Du|$ is differentiable, let us define the part of $D|Du|$ which is tangential to the spatial level sets of $u$ as
\begin{align*}
    D_T|Du|:=
    \begin{cases}
    \la e_1,D|Du|\ra e_1
    +\ldots
    +\la e_{n-1},D|Du|\ra e_{n-1} &\quad\text{if }Du\neq 0, \\
    0 &\quad\text{if }Du=0,
    \end{cases}
\end{align*}
and its orthogonal counterpart, the normalized infinity Laplacian, as
\begin{align*}
    \ilN u:=
    \begin{cases}
    \left< \frac{Du}{|Du|},D|Du|\right>=\frac{\il u}{|Du|^2} &\quad\text{if }Du\neq 0, \\
    0 &\quad\text{if }Du=0.
    \end{cases}
\end{align*}
We employ these notation to write
\begin{equation} \label{eq:OrthogonalRepresentation}
    |D|Du||^2=\abs{D_T|Du|}^2+(\ilN u)^2 \quad\text{a.e.\ in space in } \Omega_T,
\end{equation}
and
\begin{equation} \label{eq:OrthogonalLaplacian}
    \Delta_T u=\Delta u-\ilN u \quad\text{a.e.\ in } \Omega_T.
\end{equation}

\begin{lemma}[Fundamental inequality] \label{lem:FundamentalInequality}
Let $u\colon \Omega_T\to\R$ be a smooth function. Then 
\begin{align}\label{eq:tri-ineq}
    |D^2u|^2\geq 2|D_T|Du| |^2
    +\frac{(\Delta_T u)^2}{n-1}
    +(\ilN u )^2\quad\text{a.e.\ in space in } \Omega_T.
\end{align}
If $n=2$, we have equality in the place of inequality.
\end{lemma}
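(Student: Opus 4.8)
The plan is to argue pointwise and almost everywhere on each spatial slice, distinguishing the sets $\{Du=0\}$ and $\{Du\neq 0\}$. On the latter the inequality is essentially a block decomposition of the Hessian followed by Cauchy--Schwarz; on the former everything vanishes.

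First I would dispose of $\{Du=0\}$. Since $u$ is smooth, each $u_{x_i}$ is smooth and the level set $\{u_{x_i}=0\}$ decomposes into its regular part $\{u_{x_i}=0,\ \grad u_{x_i}\neq 0\}$, a $C^1$ hypersurface of $n$-dimensional measure zero, and the rest; hence $\grad u_{x_i}=0$ at a.e.\ point of $\{u_{x_i}=0\}\supseteq\{Du=0\}$, so $D^2u=0$ a.e.\ on $\{Du=0\}$. At such a point $\Delta_T u=\Delta u=0$, and by definition $D_T|Du|=0$ and $\ilN u=0$, so both sides of \eqref{eq:tri-ineq} vanish and the (in)equality is trivial, with equality in particular.

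On the open set $\{Du\neq 0\}$ I use the orthonormal frame $\{e_1,\dots,e_n\}$ fixed above with $e_n=Du/|Du|$. Writing $a_{ij}:=\la e_i,D^2u\,e_j\ra$, the Frobenius norm is basis independent, so $|D^2u|^2=\sum_{i,j=1}^n a_{ij}^2$. I then identify the blocks: $a_{nn}=|Du|^{-2}\la Du,D^2u\,Du\ra=\ilN u$; for $i<n$, $a_{in}=|Du|^{-1}\la e_i,D^2u\,Du\ra=\la e_i,D|Du|\ra$, whence $\sum_{i=1}^{n-1}a_{in}^2=\abs{D_T|Du|}^2$ by \eqref{eq:OrthogonalRepresentation}; and the tangential block $B:=(a_{ij})_{i,j=1}^{n-1}$ has $\Tr B=\Tr D^2u-a_{nn}=\Delta u-\ilN u=\Delta_T u$ by \eqref{eq:OrthogonalLaplacian}. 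Splitting off the last row and column,
\[
|D^2u|^2=|B|^2+2\sum_{i=1}^{n-1}a_{in}^2+a_{nn}^2=|B|^2+2\abs{D_T|Du|}^2+(\ilN u)^2 .
\]
Finally, Cauchy--Schwarz applied to the $n-1$ diagonal entries of $B$ gives $(\Delta_T u)^2=(\Tr B)^2\le(n-1)\sum_{i=1}^{n-1}a_{ii}^2\le(n-1)|B|^2$, i.e.\ $|B|^2\ge(\Delta_T u)^2/(n-1)$, and substituting yields \eqref{eq:tri-ineq}.

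For the equality when $n=2$: on $\{Du\neq0\}$ the block $B$ is the single scalar $a_{11}$, so $|B|^2=a_{11}^2=(\Tr B)^2=(\Delta_T u)^2$ with $n-1=1$, making the Cauchy--Schwarz step an equality; on $\{Du=0\}$ all terms vanish as above. Hence equality holds a.e.\ in space. I do not expect a genuine obstacle here: the only points needing care are the measure-zero argument that $D^2u=0$ a.e.\ on $\{Du=0\}$ and the bookkeeping of the frame so that the Hessian blocks are matched correctly to $\ilN u$, $D_T|Du|$ and $\Delta_T u$; the remaining content is one application of Cauchy--Schwarz.
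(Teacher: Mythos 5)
Your proof is correct, and it is essentially the standard argument from the references \cite{sarsa20,fengps22} to which the paper delegates this lemma: write the Hessian in the orthonormal frame adapted to $Du/|Du|$, identify the corner entry, the last column, and the trace of the tangential block with $\ilN u$, $D_T|Du|$ and $\Delta_T u$, and apply Cauchy--Schwarz to the diagonal of the tangential block (with the measure-theoretic reduction on $\{Du=0\}$ handled exactly as you do). No gaps.
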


For the proof of Lemma \ref{lem:FundamentalInequality}, we refer to \cite{sarsa20,fengps22}.

The following lemmas show that certain terms that first appear to be in non-divergence form, can actually  be expressed in a divergence form. On the other hand, these structures can be utilized in tuning the coefficients in the quadratic form as explained in Section \ref{sec:eps-plan}, and thus they improve the range we obtain. The first Lemma \ref{lem:regularized-divergence-structure} will mainly adjust the coefficient of the term $(\abs{D\ue}^2+\ez)^{\frac{p-2+s}{2}}\abs{D^2\ue}^2.$
The second divergence structure, Lemma \ref{lem:regularized-time-derivative-structure}, will produce certain new coefficients on the quadratic form as $Q$. 
The proofs of both of these lemmas are direct calculations.

\begin{lemma}[Hidden divergence structure 1] \label{lem:regularized-divergence-structure}
Let $u\colon \Omega_T\to\R$ be a smooth function. Then for any $\alpha\in\R$ and $\epsilon>0$,
\begin{equation} \label{eq:regularized-divergenge-structure}
\begin{aligned}
    &(|Du|^2+\epsilon)^{\alpha/2}
\Big\{|D^2u|^2-(\Delta u)^2
    +\alpha\frac{|D^2uDu|^2}{|Du|^2+\epsilon}-\alpha\Delta u\frac{\il u}{|Du|^2+\epsilon}\Big\}\\
    =&
    \diverg\big((|Du|^2+\epsilon)^{\alpha/2}(D^2uDu-\Delta uDu)\big).
\end{aligned}
\end{equation}
Furthermore, if $Du\neq 0$, then the above equality holds also for $\epsilon=0$.
\end{lemma}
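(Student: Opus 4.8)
The plan is to verify \eqref{eq:regularized-divergenge-structure} by directly expanding the divergence on the right hand side using the product rule, and then matching terms. Write $v:=D^2uDu-\Delta uDu$, so that the $i$-th component is $v_i=\sum_j u_{x_ix_j}u_{x_j}-\Delta u\, u_{x_i}$, and we must compute $\diverg\big((|Du|^2+\epsilon)^{\alpha/2}v\big)=(|Du|^2+\epsilon)^{\alpha/2}\diverg v+\la D\big((|Du|^2+\epsilon)^{\alpha/2}\big),v\ra$. For the first factor I would compute $\diverg v=\sum_i\partial_{x_i}\big(\sum_j u_{x_ix_j}u_{x_j}\big)-\sum_i\partial_{x_i}(\Delta u\, u_{x_i})$; the first piece gives $\sum_{i,j}(u_{x_ix_ix_j}u_{x_j}+u_{x_ix_j}^2)=\la D\Delta u,Du\ra+|D^2u|^2$ and the second gives $\la D\Delta u,Du\ra+(\Delta u)^2$, so $\diverg v=|D^2u|^2-(\Delta u)^2$, with the third-order terms canceling exactly. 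This already produces the first two terms inside the braces.

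For the gradient factor, I would use $\partial_{x_i}\big((|Du|^2+\epsilon)^{\alpha/2}\big)=\alpha(|Du|^2+\epsilon)^{\alpha/2-1}\sum_k u_{x_k}u_{x_kx_i}=\alpha(|Du|^2+\epsilon)^{\alpha/2-1}(D^2uDu)_i$, so that $\la D\big((|Du|^2+\epsilon)^{\alpha/2}\big),v\ra=\alpha(|Du|^2+\epsilon)^{\alpha/2-1}\la D^2uDu,\,D^2uDu-\Delta uDu\ra$. Expanding the inner product gives $|D^2uDu|^2-\Delta u\la D^2uDu,Du\ra=|D^2uDu|^2-\Delta u\,\il u$. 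Dividing the common factor $(|Du|^2+\epsilon)^{\alpha/2}$ back out, this contributes exactly $\alpha\frac{|D^2uDu|^2}{|Du|^2+\epsilon}-\alpha\Delta u\frac{\il u}{|Du|^2+\epsilon}$, which matches the remaining two terms inside the braces. Adding the two contributions yields \eqref{eq:regularized-divergenge-structure}.

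Finally, for the case $\epsilon=0$ with $Du\neq 0$: since $u$ is smooth and $Du$ is continuous, on a neighborhood of any point where $Du\neq 0$ all the quantities above (in particular the negative power $(|Du|^2+\epsilon)^{\alpha/2-1}$) are smooth and well-defined with $\epsilon=0$, so the identical computation goes through verbatim with $\epsilon=0$; alternatively one passes to the limit $\epsilon\to 0^+$ in \eqref{eq:regularized-divergenge-structure} on such a neighborhood, where every term converges.

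There is no real obstacle here beyond bookkeeping; the only point requiring minor care is the cancellation of the third-order derivative terms $\la D\Delta u,Du\ra$ in $\diverg v$, which relies on the symmetry of mixed partials of the smooth function $u$, and keeping track of the factor $(|Du|^2+\epsilon)^{\alpha/2-1}$ versus $(|Du|^2+\epsilon)^{\alpha/2}$ when differentiating the weight. Since the claim is an exact algebraic identity, it suffices to carry out this one computation.
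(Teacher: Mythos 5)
Your proposal is correct and follows essentially the same route as the paper's proof: apply the product rule to the divergence, use $\diverg(D^2uDu-\Delta u\,Du)=|D^2u|^2-(\Delta u)^2$, and compute $D\big((|Du|^2+\epsilon)^{\alpha/2}\big)=\alpha(|Du|^2+\epsilon)^{\frac{\alpha-2}{2}}D^2uDu$. You simply spell out the cancellation of the third-order terms in more detail than the paper does, which is fine.
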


\begin{proof}By the derivative rule of composite function, the right hand side
\begin{align*}
&\diverg\big((|Du|^2+\ez)^{\alpha/2}(D^2uDu-\Delta uDu )\big)\\
=& \big< D^2uDu-\Delta uDu , D\big((|Du|^2+\ez)^{\alpha/2} \big)\big>+ (|Du|^2+\ez)^{\alpha/2}\diverg ( D^2uDu-\Delta uDu  )\\
=&\big< D^2uDu-\Delta uDu , D\big((|Du|^2+\ez)^{\alpha/2} \big)\big>+ (|Du|^2+\ez)^{\alpha/2}\big(|D^2u|^2 -(\Delta u)^2 \big)\\
=&(|Du|^2+\ez)^{\alpha/2}\Big\{|D^2u|^2-(\Delta u)^2+\alpha\frac{|D^2uDu|^2}{|Du|^2+\ez}-\alpha\Delta u\frac{\il  u}{|Du|^2+\ez}\Big\},
\end{align*}
where 
\begin{align*}
\hspace{9 em}    D\big((|Du|^2+\ez)^{\alpha/ 2} \big)=\alpha(|Du|^2+\ez)^{\frac{\alpha-2} 2}D^2uDu.\hspace{10 em} \qedhere
\end{align*}
\end{proof}

The next lemma demonstrates that a mixed term can be written in a  divergence form. On the other hand by using equation \eqref{eq:RoadmapRegularized}, as explained in \eqref{eq:to-quadratic-form-error}, the mixed term adds up in the quadratic form, and thus adding such mixed terms can be used to improve the range.

\begin{lemma}[Hidden divergence structure 2] \label{lem:regularized-time-derivative-structure}
Let $u\colon \Omega_T\to\R$ be a smooth function. Then for any $\beta\in\R$ and $\epsilon>0$,
\begin{equation} \label{eq:regularized-time-derivative-structure}
\begin{aligned}
    &u_t(|Du|^2+\epsilon)^{\beta/2}
    \Big(\Delta u +\beta\frac{\il u}{|Du|^2+\epsilon}\Big) \\
    =&u_t\diverg\big((|Du|^2+\ez)^{\beta/2}Du\big) \\
    =&
    \begin{cases}
    \displaystyle\diverg\big(u_t(|Du|^2+\epsilon)^{\beta/2} Du\big)-\Big(\frac{(|Du|^2+\epsilon)^{\frac{\beta+2}{2}}}{\beta+2}\Big)_t
    &\quad\text{if }\beta\neq -2, \\
\displaystyle\diverg\big(u_t(|Du|^2+\epsilon)^{-1}Du\big)-\Big(\frac{\ln (|Du|^2+\epsilon)}{2}\Big)_t
    &\quad\text{if }\beta=-2.
    \end{cases}
\end{aligned}
\end{equation}
Furthermore, if $Du\neq 0$, then the above equality holds also for $\epsilon=0$.
\end{lemma}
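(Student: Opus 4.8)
The plan is to prove the identity in Lemma \ref{lem:regularized-time-derivative-structure} by a direct computation, exactly in the spirit of the proof of Lemma \ref{lem:regularized-divergence-structure}. The statement contains two separate equalities, and I would establish them one at a time. For the first equality, namely
\[
u_t(|Du|^2+\epsilon)^{\beta/2}\Big(\Delta u+\beta\frac{\il u}{|Du|^2+\epsilon}\Big)=u_t\diverg\big((|Du|^2+\epsilon)^{\beta/2}Du\big),
\]
I would expand the divergence on the right using the product rule:
\[
\diverg\big((|Du|^2+\epsilon)^{\beta/2}Du\big)=\big\langle D\big((|Du|^2+\epsilon)^{\beta/2}\big),Du\big\rangle+(|Du|^2+\epsilon)^{\beta/2}\diverg(Du).
\]
Since $\diverg(Du)=\Delta u$ and $D\big((|Du|^2+\epsilon)^{\beta/2}\big)=\beta(|Du|^2+\epsilon)^{\frac{\beta-2}{2}}D^2uDu$ (the same composite-function rule used in the previous proof), the inner product becomes $\beta(|Du|^2+\epsilon)^{\frac{\beta-2}{2}}\langle D^2uDu,Du\rangle=\beta(|Du|^2+\epsilon)^{\frac{\beta-2}{2}}\il u$. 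Multiplying through by $u_t$ and factoring out $(|Du|^2+\epsilon)^{\beta/2}$ gives the first equality.

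For the second equality I would treat the two cases of the statement. When $\beta\neq-2$, I would compute the time derivative of $(|Du|^2+\epsilon)^{\frac{\beta+2}{2}}/(\beta+2)$ by the chain rule:
\[
\Big(\frac{(|Du|^2+\epsilon)^{\frac{\beta+2}{2}}}{\beta+2}\Big)_t=(|Du|^2+\epsilon)^{\beta/2}\langle Du,(Du)_t\rangle=(|Du|^2+\epsilon)^{\beta/2}\langle Du,Du_t\rangle,
\]
using that $(Du)_t=Du_t$ for smooth $u$. Comparing with the expansion of $\diverg\big(u_t(|Du|^2+\epsilon)^{\beta/2}Du\big)=u_t\diverg\big((|Du|^2+\epsilon)^{\beta/2}Du\big)+\langle D u_t,(|Du|^2+\epsilon)^{\beta/2}Du\rangle$ shows the cross terms cancel and the first term is exactly $u_t\diverg\big((|Du|^2+\epsilon)^{\beta/2}Du\big)$, which by the first equality equals the left hand side. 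The case $\beta=-2$ is identical, except one uses $\big(\tfrac12\ln(|Du|^2+\epsilon)\big)_t=(|Du|^2+\epsilon)^{-1}\langle Du,Du_t\rangle$ in place of the power-rule computation; this is the natural limiting form since $\beta+2\to0$ makes the power formula degenerate.

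Finally, for the case $\epsilon=0$ with $Du\neq0$, I would observe that all the manipulations above only involve elementary calculus identities that remain valid when $\epsilon=0$, as long as $|Du|^2$ is bounded away from zero so that all the powers $(|Du|^2)^{\beta/2}$, $(|Du|^2)^{\frac{\beta-2}{2}}$ and the logarithm are smooth; thus the proof carries over verbatim. I do not anticipate any genuine obstacle here — this is a routine verification — but the one point requiring a little care is bookkeeping the two forms (the power case versus the logarithmic case) and making sure the chain-rule constants line up, so that the $(\beta+2)^{-1}$ factor and the $\tfrac12$ in front of the logarithm are exactly the ones that make the cross terms cancel.
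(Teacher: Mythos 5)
Your proposal is correct and follows essentially the same route as the paper's proof: a direct product-rule/chain-rule computation in which the cross term $(|Du|^2+\epsilon)^{\beta/2}\langle Du,Du_t\rangle$ from expanding $\diverg\big(u_t(|Du|^2+\epsilon)^{\beta/2}Du\big)$ cancels the time-derivative term, with the remaining divergence expanded to yield the left-hand side. The paper likewise treats the case $\beta=-2$ as "similar," so your handling of the logarithmic case and of $\epsilon=0$ with $Du\neq0$ matches the intended argument.
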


\begin{proof}
We give the proof when $\beta\neq -2$, the second case is similar. By the derivative rule of composite function again, one has
\begin{align*}
&\diverg\big(u_t(|Du|^2+\epsilon)^{\beta/2} Du\big)-\Big(\frac{(|Du|^2+\epsilon)^{\frac{\beta+2}{2}}}{\beta+2}\Big)_t\\
=&u_t\diverg\big((|Du|^2+\epsilon)^{\beta/2} Du\big)+(|Du|^2+\epsilon)^{\beta/2} Du Du_t
-\Big(\frac{(|Du|^2+\epsilon)^{\frac{\beta+2}{2}}}{\beta+2}\Big)_t\\
=&u_t\diverg\big((|Du|^2+\epsilon)^{\beta/2} Du\big)\\
=&u_t\big<D\big((|Du|^2+\epsilon)^{\beta/2}\big), Du\big>+u_t (|Du|^2+\epsilon)^{\beta/2} \diverg(Du) \\
=&u_t(|Du|^2+\ez)^{\beta/2}\Big(\Delta u+\beta\frac{\il u}{|Du|^2+\ez}\Big).
\end{align*}
\end{proof}
For $\alpha\in\R$, we
denote the `first good divergence structure' as
\begin{equation} \label{eq:good-elliptic}
GD_1(\alpha):=\diverg\big((|Du|^2+\epsilon)^{\alpha/2}(D^2u Du-\Delta uDu)\big)
\end{equation}
and the `second good divergence structure' 
\begin{equation} \label{eq:good-parabolic}
\begin{aligned}
   GD_2(\alpha):=
    \begin{cases}
    \displaystyle\diverg\big(u_t(|Du|^2+\epsilon)^{\frac{\alpha-\gamma}{2}} Du\big)-\Big(\frac{(|Du|^2+\epsilon)^{\frac{\alpha-\gamma+2}{2}}}{\alpha-\gamma+2}\Big)_t
    &\quad\text{if }\alpha\neq \gamma-2, \\
    \displaystyle\diverg\big(u_t(|Du|^2+\epsilon)^{-1} Du\big)-\Big(\frac{\ln (|Du|^2+\epsilon)}{2}\Big)_t
    &\quad\text{if }\alpha=\gamma-2.
\end{cases}
\end{aligned}
\end{equation}

Then as explained in (\ref{eq:Roadmap-S-with-errorterms-alpha-is-0}), we consider the following weighted sum of these `good structures',
\begin{equation}\label{eq:S}
 \begin{aligned}
    S:=&w_1GD_1(p-2+s)+w_2GD_2(p-2+s)\\
&+\epsilon w_3GD_1(p-4+s)+\epsilon w_4GD_2(p-4+s) 
\end{aligned}   
\end{equation}
for some parameter $s\in\R$ and some weights $w_1,w_2,w_3,w_4\in\R$.  
Observe that taking into account Lemmas \ref{lem:regularized-divergence-structure} and \ref{lem:regularized-time-derivative-structure}, then $S$ introduced above coincides with $S$ in \eqref{eq:Roadmap-S-with-errorterms-alpha-is-0}, i.e.\ the notation is consistent. The reason for using the mixed term form in $S$ there was to emphasize the idea that we can improve the range by adding the mixed terms. To derive the final estimate, we need terms in the divergence form, and therefore this form was used in the above definition of $S$, but as stated they are equivalent.
\subsection{The key estimate}
\label{sec:computations}
As explained in \eqref{eq:key-estimate-plan}, $S$ represents the right hand side in our key estimate, and on the left we should have the second derivatives and a positive definite quadratic form. In this section, we derive the key estimate corresponding to \eqref{eq:key-estimate-plan} in detail.

We use Lemmas \ref{lem:regularized-divergence-structure} and \ref{lem:regularized-time-derivative-structure} to rewrite $S$ as a linear combination of time derivatives and second order spatial derivative quantities, similarly to the left hand side of \eqref{eq:RoadmapBasicIdentity}.
First recall shorthand notation $\theta$ and $\kappa$ from \eqref{eq:theta-kappa}
\begin{align*}
  \theta=\frac{|D\ue|^2}{|D\ue|^2+\epsilon}
\quad\text{and}\quad
\kappa=\frac{\epsilon}{|D\ue|^2+\epsilon},
\end{align*}
thus $0\leq \theta,\kappa\leq1$, $\theta+\kappa=1$ and 
\begin{align*}
        \frac{|D^2\ue D\ue|^2}{|D\ue|^2+\epsilon}=\theta \abs{D\abs{D\ue}}^2 \quad\text{and}\quad \frac{\il \ue}{|D\ue|^2+\epsilon}=\theta\ilN \ue.
\end{align*}
In particular, if $\epsilon=0$ and the gradient does not vanish, then $\theta\equiv1$ and $\kappa\equiv0$.
Next we recall the definition of $S$ from the above, and use the good divergence structures i.e.\ Lemma \ref{lem:regularized-divergence-structure} 
(with $\alpha=p-2+s \; \text{and}\; p-4+s$   ) and Lemma \ref{lem:regularized-time-derivative-structure} (with $\beta=p-2+s-\gamma\;\text{and}\; p-4+s-\gamma$). For a smooth solution and indeed for any smooth function, we have
\begin{align*}
   S
    =&
    w_1 (|D\ue|^2+\epsilon)^{\frac{p-2+s}{2}}
    \Big\{|D^2\ue|^2-(\Delta \ue)^2
    +(p-2+s)\Big(\frac{|D^2\ue D\ue|^2}{|D\ue|^2+\epsilon}- \Delta \ue\frac{\il \ue}{|D\ue|^2+\epsilon}\Big)\Big\}\\
    &+w_2\ue_t(|D\ue|^2+\epsilon)^{\frac{p-2+s-\gamma}{2}}
    \Big\{\Delta \ue +(p-2+s-\gamma)\frac{\il \ue}{|D\ue|^2+\epsilon}\Big\}\\
    &+\ez w_3(|D\ue|^2+\epsilon)^{\frac{p-4+s}{2}}
    \Big\{|D^2\ue|^2-(\Delta \ue)^2
    +(p-4+s)\Big(\frac{|D^2\ue D\ue|^2}{|D\ue|^2+\epsilon}- \Delta \ue\frac{\il \ue}{|D\ue|^2+\epsilon}\Big)\Big\}\\
    &+\ez w_4\ue_t(|D\ue|^2+\epsilon)^{\frac{p-4+s-\gamma}{2}}
    \Big\{\Delta \ue +(p-4+s-\gamma)\frac{\il \ue}{|D\ue|^2+\epsilon}\Big\}.
\end{align*}

Then by simplifying, we get
    \begin{align*}
        S=&(|D\ue|^2+\epsilon)^{\frac{p-2+s}{2}}
    \Big\{
    (w_1+w_3\kappa)\big(|D^2\ue|^2-(\Delta \ue)^2\big)\\
    &
    +\big(w_1(p-2+s)+w_3(p-4+s)\kappa\big)\theta(|D|D\ue||^2-\Delta \ue\ilN \ue) \\
    &
    +(w_2+w_4\kappa)(|D\ue|^2+\epsilon)^{-\gamma/2}\ue_t\Delta \ue\\
    &
    +\big(w_2(p-2+s-\gamma)+w_4(p-4+s-\gamma)\kappa\big)\theta(|D\ue|^2+\epsilon)^{-\gamma/2}\ue_t\ilN \ue
    \Big\}\\
   =&
   (|D\ue|^2+\epsilon)^{\frac{p-2+s}{2}}
    \Big\{
    c_1\big(|D^2\ue|^2-(\Delta \ue)^2\big)
    +c_2(|D|D\ue||^2-\Delta \ue\ilN \ue) \\
    &\quad
    +c_3(|D\ue|^2+\epsilon)^{-\gamma/2}\ue_t\Delta \ue
    +c_4(|D\ue|^2+\epsilon)^{-\gamma/2}\ue_t\ilN \ue
    \Big\}
\end{align*}
almost everywhere in $\Omega_T$, where 
\begin{align}\label{eq:c1}
\begin{cases}
    c_1=w_1+w_3\kappa,\qquad
    &c_2=\big(w_1(p-2+s)+w_3(p-4+s)\kappa\big)\theta,\\
    c_3=w_2+w_4\kappa,  \qquad     &c_4=\big(w_2(p-2+s-\gamma)+w_4(p-4+s-\gamma)\kappa\big)\theta.
\end{cases}
\end{align}
Observe that given $p$, $\gamma$ and $s$, if $\epsilon=0$, then $c_1,\ldots,c_4$ reduce to constants that only depend on $w_1$ and $w_2$, which shows that in smooth case by adjusting $w_1$ and $w_2$, we can get the desired estimate as explained in  \eqref{eq:Roadmap-weighted-ineq-formal}. 

By employing expressions \eqref{eq:OrthogonalRepresentation} and \eqref{eq:OrthogonalLaplacian}, we can write
\begin{equation} \label{eq:S-with-lemmas-and-decomp}
\begin{aligned}
    S
    &=
    (|D\ue|^2+\epsilon)^{\frac{p-2+s}{2}}
    \Big\{
    c_1|D^2\ue|^2+c_2|D_T|D\ue||^2 
    -c_1(\Delta_T\ue)^2
    -c_1(\ilN \ue)^2 \\
    &\quad
    -(2c_1+c_2)\Delta_T\ue\ilN \ue
    +c_3(|D\ue|^2+\epsilon)^{-\gamma/2}\ue_t\Delta_T\ue \\
    &\quad
    +(c_3+c_4)(|D\ue|^2+\epsilon)^{-\gamma/2}\ue_t\ilN \ue
    \Big\}
\end{aligned}
\end{equation}
almost everywhere in $\Omega_T$.
Next we use regularized equation \eqref{eq:RoadmapRegularized} to replace time derivatives $u_t$ in \eqref{eq:S-with-lemmas-and-decomp} with spatial derivatives. Thus we arrive to the key estimate 
for a smooth solution to the regularized equation (which is actually equality at this point) 
\begin{equation} 
\label{eq:replace-time-derivatives}
\begin{aligned}
    &(|D\ue|^2+\epsilon)^{\frac{p-2+s}{2}}
    \Big\{
    c_1|D^2\ue|^2+c_2|D_T|D\ue||^2
    +(c_3-c_1)(\Delta_T\ue)^2 \\
    &\quad
    +\big((c_3+c_4)P_\theta -c_1\big)(\ilN \ue)^2 \\
    &\quad
    +\big(c_3P_\theta+(c_3+c_4)-(2c_1+c_2)\big)\Delta_T\ue\ilN \ue
    \Big\}=S,
\end{aligned}
\end{equation}
where $$P_\theta :=(p-2)\theta+1 \in(0,\infty) $$
for the sake of brevity.
We rewrite this as 
\begin{equation} \label{eq:S-shortly-written}
\begin{aligned}
    (|D\ue|^2+\epsilon)^{\frac{p-2+s}{2}}
    \Big\{
    c_1|D^2\ue|^2+c_2|D_T|D\ue||^2 + R\Big\}=S
\end{aligned}
\end{equation}
where
\begin{align*}
    R
    &:=
    (c_3-c_1)(\Delta_T\ue)^2 
    +\big((c_3+c_4)P_\theta-c_1\big)(\ilN \ue)^2 \\
    &\quad
    +\big(c_3P_\theta+(c_3+c_4)-(2c_1+c_2)\big)\Delta_T\ue\ilN \ue
\end{align*}
is a quadratic form in variables $\Delta_T u$ and $\ilN u$. We rewrite $R$ as
$$ R=\la \bar{x},N\bar{x}\ra, $$
where $\bar{x}=(\Delta_T \ue,\ilN \ue)^T\in\R^2$ and $N\in\R^{2\times 2}$ is a symmetric matrix whose entries $N_{ij}$, $i,j=1,2$, are given by 
\begin{align*} 
\begin{cases}
    N_{11}&=c_2-c_1\\
    N_{12}=N_{21}&=
    \tfrac{1}{2}\big(c_3P_\theta+(c_3+c_4)
    -(2c_1+c_2)\big)\\
    N_{22}&=(c_3+c_4)P_\theta-c_1.
\end{cases}
\end{align*}
Note that
\begin{align*}
    \|N\|_{L^\infty(\Omega_T)}
    :=\sup\{|N(x,t)|:(x,t)\in \Omega_T\}
\end{align*}
where 
$$|N(x,t)|=\sqrt{\big(N_{11}(x,t)\big)^2+\big(N_{12}(x,t)\big)^2+\big(N_{21}(x,t)\big)^2+\big(N_{22}(x,t)\big)^2},$$
has an upper bound that only depends on $p$, $\gamma$ and $s$  by fixing $w_1,w_2,w_3$ and $w_4$.  
\subsection{Auxiliary lemmas}\label{subsec:Sum S}
In this subsection we state two technical lemmas that can be used to conclude our main integral estimate.

We want to apply the fundamental inequality, Lemma \ref{lem:FundamentalInequality}, to estimate $|D^2\ue|^2$ in \eqref{eq:S-shortly-written} from below to improve the range condition by using terms we obtain in this application.
However, the direct application will eliminate the full Hessian $|D^2\ue|^2$ that we want to estimate. We could leave a small fraction of $|D^2\ue|^2$ (like the method was first described at the end of Section \ref{sec:eps-plan} for simplicity) and apply the fundamental inequality only to a remaining part, but actually this will not be necessary: The next lemma  shows that already a seemingly weaker lower bound is sufficient. This will simplify the exposition. 

\begin{lemma} \label{lem:ad-hoc}
Let $\ue\colon\Omega_T\to\R$ be a smooth solution to \eqref{eq:RoadmapRegularized}, $S$ as in \eqref{eq:S}, $c_1$ as in \eqref{eq:c1}, and $\eps\ge 0$.
If $\epsilon=0$, we assume in addition that $D\ue\neq0$.
 Suppose that we can select  $w_1,w_2,w_3,w_4\in\R$ such that 
$c_1=c_1(n,p,\gamma,s,w_1,w_2,w_3,w_4)>0$, $c=c(n,p,\gamma,s,w_1,w_2,w_3,w_4)>0$ and
\begin{align}\label{eq:lower-bound-S}
(|D\ue|^2+\epsilon)^{\frac{p-2+s}{2}}\Big\{c|D_T|D\ue||^2+Q\Big\}\le S \quad\text{a.e.\ in space in }\Omega_T,
\end{align}
where 
$$ Q=\la \bar{x},M\bar{x}\ra $$
with $\bar{x}=(\Delta_T\ue,\ilN \ue)^T\in\R^2$ and a  uniformly bounded positive definite (with a uniform constant) symmetric matrix $M=M(n,p,\gamma,s,w_1,w_2,w_3,w_4)\in \R^{2\times 2}$. Then there is
$\lambda=\lambda(n,p,\gamma,s,w_1,w_2,w_3,w_4)>0$ 
such that
\begin{align*}
    \lambda (|D\ue|^2+\epsilon)^{\frac{p-2+s}{2}}|D^2\ue|^2\le S \quad\text{a.e.\ in space in }\Omega_T. 
\end{align*}
\end{lemma}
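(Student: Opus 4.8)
The plan is to combine the fundamental inequality with the lower bound \eqref{eq:lower-bound-S} in a way that preserves a definite fraction of $|D^2\ue|^2$. First I would recall from Lemma \ref{lem:FundamentalInequality} that
\begin{align*}
    |D^2\ue|^2\geq 2|D_T|D\ue||^2+\frac{(\Delta_T\ue)^2}{n-1}+(\ilN\ue)^2
    \quad\text{a.e.\ in space in }\Omega_T,
\end{align*}
so that for any $t\in(0,1)$ we may write $|D^2\ue|^2=t|D^2\ue|^2+(1-t)|D^2\ue|^2$ and bound the second summand from below by the fundamental inequality. This gives
\begin{align*}
    |D^2\ue|^2\geq t|D^2\ue|^2+2(1-t)|D_T|D\ue||^2+(1-t)\Big(\frac{(\Delta_T\ue)^2}{n-1}+(\ilN\ue)^2\Big)
\end{align*}
a.e.\ in space. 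Multiplying by $(|D\ue|^2+\epsilon)^{\frac{p-2+s}{2}}\,c_1>0$ and moving terms around, it suffices to absorb the last two groups of terms into the right hand side $S$ using the hypothesis.

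The key step is the absorption. Write $c_1|D^2\ue|^2 = t\,c_1|D^2\ue|^2 + (1-t)c_1|D^2\ue|^2$ and note that, by \eqref{eq:S-shortly-written},
\begin{align*}
    (|D\ue|^2+\epsilon)^{\frac{p-2+s}{2}}\big\{c_1|D^2\ue|^2+c_2|D_T|D\ue||^2+R\big\}=S.
\end{align*}
I would not use this identity directly; instead I use the assumed inequality \eqref{eq:lower-bound-S}, which already encodes that $S$ dominates $c|D_T|D\ue||^2+Q$ with $Q$ positive definite. The point is: the fundamental inequality lets us replace a fraction $(1-t)$ of $c_1|D^2\ue|^2$ by a nonnegative combination of $|D_T|D\ue||^2$, $(\Delta_T\ue)^2$ and $(\ilN\ue)^2$. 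Since $M$ is positive definite with a uniform constant, say $Q\geq \mu(|\Delta_T\ue|^2+|\ilN\ue|^2)$ with $\mu=\mu(n,p,\gamma,s,w_1,w_2,w_3,w_4)>0$, and since $c>0$, for $t$ close enough to $1$ the quantity $(1-t)c_1\big(2|D_T|D\ue||^2+\frac{(\Delta_T\ue)^2}{n-1}+(\ilN\ue)^2\big)$ is pointwise dominated by $c|D_T|D\ue||^2+Q$. Concretely, choose $t\in(0,1)$ with
\begin{align*}
    (1-t)c_1\max\Big\{2,\ \frac{1}{n-1},\ 1\Big\}\leq \min\{c,\mu\}.
\end{align*}
Then
\begin{align*}
    (1-t)c_1(|D\ue|^2+\epsilon)^{\frac{p-2+s}{2}}\Big(2|D_T|D\ue||^2+\tfrac{(\Delta_T\ue)^2}{n-1}+(\ilN\ue)^2\Big)
    \leq (|D\ue|^2+\epsilon)^{\frac{p-2+s}{2}}\big\{c|D_T|D\ue||^2+Q\big\}\leq S,
\end{align*}
where the last inequality is exactly \eqref{eq:lower-bound-S}. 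Combining this with the split $c_1|D^2\ue|^2\geq t\,c_1|D^2\ue|^2+(1-t)c_1\big(2|D_T|D\ue||^2+\frac{(\Delta_T\ue)^2}{n-1}+(\ilN\ue)^2\big)$ multiplied by $(|D\ue|^2+\epsilon)^{\frac{p-2+s}{2}}$, and using once more \eqref{eq:lower-bound-S} — or rather the fact that the left side of \eqref{eq:S-shortly-written} with the $R$-term handled via positivity of $Q$ is $\le S$ — we would obtain $t\,c_1(|D\ue|^2+\epsilon)^{\frac{p-2+s}{2}}|D^2\ue|^2\le S$, so $\lambda:=t\,c_1$ works.

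I expect the main subtlety to be bookkeeping rather than a genuine obstacle: one must be careful that the hypothesis is phrased with the \emph{specific} combination $c|D_T|D\ue||^2+Q$ appearing on the left of \eqref{eq:lower-bound-S}, and that after peeling off a fraction of $|D^2\ue|^2$ via the fundamental inequality, the residual tangential and normal second-order terms are \emph{exactly} of the form that this combination controls — which they are, since $2|D_T|D\ue||^2$ is controlled by $c|D_T|D\ue||^2$ (for small $1-t$) and $\frac{(\Delta_T\ue)^2}{n-1}+(\ilN\ue)^2$ is controlled by the positive definite form $Q$ (again for small $1-t$). The uniform positive definiteness of $M$ and uniform positivity of $c_1$ and $c$ — all guaranteed by hypothesis — are what make the threshold $t$ depend only on $n,p,\gamma,s$ and the weights, so that $\lambda$ has the asserted dependence. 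No use of the equation beyond what is already baked into \eqref{eq:lower-bound-S} is needed, and the argument is identical for $\epsilon=0$ (with $D\ue\neq 0$) and $\epsilon>0$ since every inequality invoked holds a.e.\ in space in both cases.
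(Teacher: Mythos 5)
There is a genuine gap, and it sits exactly at your ``key step''. From the fundamental inequality you obtain
\begin{align*}
c_1|D^2\ue|^2\ \ge\ t\,c_1|D^2\ue|^2+(1-t)c_1\Big(2|D_T|D\ue||^2+\tfrac{(\Delta_T\ue)^2}{n-1}+(\ilN\ue)^2\Big),
\end{align*}
and separately you show that the second summand (times $(|D\ue|^2+\epsilon)^{\frac{p-2+s}{2}}$) is $\le S$. These two facts do not chain: an upper bound on a \emph{piece of a lower bound} for $c_1|D^2\ue|^2$ gives no upper bound on $t\,c_1|D^2\ue|^2$ in terms of $S$. To compare $|D^2\ue|^2$ with $S$ you must express $S$ itself through $|D^2\ue|^2$, i.e.\ use the identity \eqref{eq:S-shortly-written}, $S=(|D\ue|^2+\epsilon)^{\frac{p-2+s}{2}}\{c_1|D^2\ue|^2+c_2|D_T|D\ue||^2+R\}$ with $R=\la\bar x,N\bar x\ra$ — precisely the identity you announce you will not use. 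Once you do use it, the terms $c_2|D_T|D\ue||^2+R$ must be shown nonnegative (or dominated), and here your argument conflates $R$ with $Q$: the hypothesis gives positive definiteness of $M$, not of $N$, and $c_2$ can be negative ($c_2=-2w_3\theta\kappa$ in the regularized case). ``Handling the $R$-term via positivity of $Q$'' is exactly the non-trivial content of the lemma, not something already available. Relatedly, your closing claim that nothing beyond \eqref{eq:lower-bound-S} is needed fails for $n\ge3$: the left-hand side of \eqref{eq:lower-bound-S} contains no $|D^2\ue|^2$, and since Lemma \ref{lem:FundamentalInequality} is one-sided (strict for $n\ge 3$, as the off-diagonal tangential entries of $D^2\ue$ are invisible to $|D_T|D\ue||$, $\Delta_T\ue$ and $\ilN\ue$), the quantity $c|D_T|D\ue||^2+Q$ cannot dominate any positive multiple of $|D^2\ue|^2$. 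Only for $n=2$, where Lemma \ref{lem:FundamentalInequality} is an equality, would the hypothesis alone immediately yield the conclusion (with $\lambda=\min\{c/2,\mu\}$ and no $t$-split at all); but the lemma is stated and used for all $n\ge2$, e.g.\ in Proposition \ref{prop:smooth}.

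The paper's proof repairs exactly this point by a convex split of $S$ rather than of $|D^2\ue|^2$: write $S=\lambda S+(1-\lambda)S$, substitute the identity \eqref{eq:S-shortly-written} for $\lambda S$ and the hypothesis \eqref{eq:lower-bound-S} for $(1-\lambda)S$, which yields
\begin{align*}
S\ \ge\ (|D\ue|^2+\epsilon)^{\frac{p-2+s}{2}}\Big\{\lambda c_1|D^2\ue|^2+\big(c+\lambda(c_2-c)\big)|D_T|D\ue||^2+\la\bar x,\big(M+\lambda(N-M)\big)\bar x\ra\Big\},
\end{align*}
and then choose $\lambda$ small so that $c+\lambda(c_2-c)>0$ and $M+\lambda(N-M)$ remains positive definite (Sylvester's criterion together with the uniform bounds on $\|M\|_{L^\infty}$, $\|N\|_{L^\infty}$). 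If you want to keep your decomposition of $|D^2\ue|^2$ instead, you would have to apply the fundamental inequality to a fraction of $c_1|D^2\ue|^2$ \emph{inside the identity} and then verify that $c_2+2(1-t)c_1\ge0$ and $N+(1-t)c_1\,\diag(\tfrac1{n-1},1)\succeq0$ — which does not follow from the abstract hypothesis on $c$ and $M$ and is why the paper's route through the convex combination of $S$ is the cleaner one.
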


\begin{proof}
Recall that
\begin{align*}
    S=w_1GD_1(p-2+s)+w_2GD_2(p-2+s)+\epsilon w_3GD_1(p-4+s)+\epsilon w_4GD_2(p-4+s) 
\end{align*}
which, as pointed out in \eqref{eq:S-shortly-written}, can be written as
\begin{equation}
\begin{aligned}
    S
    &=
    (|D\ue|^2+\epsilon)^{\frac{p-2+s}{2}}
    \Big\{
    c_1|D^2\ue|^2+c_2|D_T|D\ue||^2+\la \bar{x},N\bar{x}\ra
    \Big\}
\end{aligned}    
\end{equation}
almost everywhere in space $\Omega_T$, where
$\bar{x}$ and $N$ are as in \eqref{eq:S-shortly-written}. Observe that we utilized equation \eqref{eq:RoadmapRegularized} at this step to get rid of the time derivatives. 

For any $\lambda\in(0,1)$, we write
$$ S=\lambda S+(1-\lambda)S $$
and use the assumption \eqref{eq:lower-bound-S} to estimate $(1-\lambda)S$ from below. We end up with 
\begin{align*}
    S
    &\geq
    (|D\ue|^2+\epsilon)^{\frac{p-2+s}{2}}
    \Big\{
    \lambda c_1|D^2\ue|^2
    +\big(c+\lambda(c_2-c)\big)|D_T|D\ue||^2 \\
    &\quad
    +\left< \bar{x},\big(M+\lambda(N-M)\big)\bar{x}\right> 
    \Big\}.
\end{align*}
We claim that we can select $\lambda>0$ such that $c+\lambda(c_2-c)\geq 0$ and $M+\lambda(N-M)$
is a positive definite matrix. 
Indeed, since $c>0$, then
\begin{align*}
    c+\lambda(c_2-c)
    \geq 
    c-\lambda\|c_2-c\|_{L^\infty(\Omega_T)}>0,
\end{align*}
uniformly if 
$\lambda=\lambda(n,p,\gamma,s,w_1,w_2,w_3,w_4)>0$
is small enough.
Next we recall that the boundedness and positive definiteness of $M$ implies 
\begin{align} \label{eq:conditions-for-M}
    \|M\|_{L^\infty(\Omega_T)}\leq C,
    \quad\text{and}\quad
    M_{11}\geq c
    \quad\text{and}\quad
    \det(M)\geq c \quad\text{in }\Omega_T
\end{align}
by Sylvester's criterion and choosing small enough $c>0$.
For the positive definiteness of the matrix $M+\lambda(N-M)$ we can use Sylvester's criterion again and check that the leading principal minors are positive if $\lambda>0$ is small enough.
The first principal minor is the upper-left corner entry, i.e.
\begin{align*}
    \big(M+\lambda(N-M)\big)_{11}
    =M_{11}+\lambda(N_{11}-M_{11})
    \geq c-\lambda(\|N\|_{L^{\infty}(\Omega_T)}+\|M\|_{L^\infty(\Omega_T)}),
\end{align*}
and the second principal minor is the determinant, i.e.
\begin{align*}
    \operatorname{det}\big(M+\lambda(N-M)\big)
    &=
    \operatorname{det}(M)
    +\lambda\big(M_{11}N_{22}+M_{22}N_{11}-2M_{12}N_{12}-2\det(M)\big) \\
    &\quad
    +\lambda^2\operatorname{det}(N-M) \\
    &\geq
    c-2\lambda\left(2\|M\|_{L^\infty(\Omega_T)}\|N\|_{L^\infty(\Omega_T)}+\|M\|_{L^\infty(\Omega_T)}^2\right)-\lambda^2\|N-M\|_{L^\infty(\Omega_T)}^2 \\
    &\geq
    c-4\lambda\left(\|M\|_{L^\infty(\Omega_T)}+\|N\|_{L^\infty(\Omega_T)}\right)^2.
\end{align*}
Hence we choose $\lambda$ such that
\begin{align*}
     0<\lambda <
    \min\Big\{1,  
    \frac{c}{\|c_2-c\|_{L^\infty(\Omega_T)}},&
    \frac{c}{\|N\|_{L^{\infty}(\Omega_T)}+\|M\|_{L^\infty(\Omega_T)}},\\
    &
    \frac{c}{4\left(\|M\|_{L^\infty(\Omega_T)}+\|N\|_{L^\infty(\Omega_T)}\right)^2}
    \Big\}.
\end{align*}
Since we have now proven the nonnegativity of the excess terms, the result follows.
\end{proof}

The following lemma shows that we can derive the desired integral estimate from the pointwise lower bound. 
The proof uses rather standard techniques and is based on localization with a suitable cutoff function and then integration by parts. For the convenience of the reader, we give the details in the appendix. 

\begin{lemma} \label{lem:lemma-for-final-estimate}
Let $\ue\colon\Omega_T\to\R$ be a smooth solution to \eqref{eq:RoadmapRegularized}, and $S$ as in \eqref{eq:S}.
If $\epsilon=0$, we assume in addition that $D\ue\neq0$.
Suppose that we can find weights $w_1,w_2,w_3,w_4\in\R$ such that
\begin{equation} \label{eq:lower-bound-for-S}
    \lambda(|D\ue|^2+\epsilon)^{\frac{p-2+s}{2}}|D^2\ue|^2\le S \quad\text{a.e.\ in space in }\Omega_T,
\end{equation}
for some constant $\lambda=\lambda(n,p,\gamma,s,w_1,w_2,w_3,w_4)>0$.
If $s\neq \gamma-p$, then for any concentric parabolic
cylinders $Q_r\subset Q_{2r}\Subset\Omega_T$ with center point $(x_0,t_0)\in\Om_T$, we have the estimate
\begin{equation} \label{eq:lemma-for-final-estimate}
    \begin{aligned}
    \int_{Q_r}&\abs{D\big((|D\ue|^2+\epsilon)^{\frac{p-2+s}{4}}D\ue\big)}^2dxdt\\
    \leq&
    \frac{C}{r^2}\Big(
    \int_{Q_{2r}}(|D\ue|^2+\epsilon)^{\frac{p-2+s}{2}}|D\ue|^2dxdt 
    +\int_{Q_{2r}}(|D\ue|^2+\epsilon)^{\frac{p+s-\gamma}{2}}dxdt\Big) \\
    &+\epsilon\Big(\frac{C}{r^2}\int_{Q_{2r}}\big|\ln(|D\ue|^2+\epsilon)\big|dxdt 
    +C\int_{B_{2r}}\big|\ln(|D\ue(x,t_0)|^2+\epsilon)\big|dx\Big)
    \end{aligned}
\end{equation}
where $C=C(n,p,\gamma,s,\lambda,w_1,w_2,w_3,w_4)>0$.
\end{lemma}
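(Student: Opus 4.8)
The goal is to pass from the pointwise bound \eqref{eq:lower-bound-for-S} to the integral estimate \eqref{eq:lemma-for-final-estimate} by localizing with a cutoff function and integrating by parts, using the divergence structure of $S$. First I would fix a standard cutoff $\phi\in C_0^\infty(Q_{2r})$ with $0\le\phi\le1$, $\phi\equiv1$ on $Q_r$, $|D\phi|\le C/r$ and $|\phi_t|\le C/r^2$. Recalling from \eqref{eq:S} that
$$
S=w_1GD_1(p-2+s)+w_2GD_2(p-2+s)+\epsilon w_3GD_1(p-4+s)+\epsilon w_4GD_2(p-4+s),
$$
I would multiply \eqref{eq:lower-bound-for-S} by $\phi^2$ and integrate over $Q_{2r}$. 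The left hand side, after using the elementary identity relating $(|D\ue|^2+\epsilon)^{\frac{p-2+s}{2}}|D^2\ue|^2$ to $\bigl|D\bigl((|D\ue|^2+\epsilon)^{\frac{p-2+s}{4}}D\ue\bigr)\bigr|^2$ up to a bounded factor (the Hessian controls this gradient since differentiating the weight produces only $D^2\ue D\ue$ terms which are dominated by $|D^2\ue|\,|D\ue|\le (|D\ue|^2+\epsilon)^{1/2}|D^2\ue|$), gives a lower bound by $\lambda'\int_{Q_{2r}}\phi^2\bigl|D\bigl((|D\ue|^2+\epsilon)^{\frac{p-2+s}{4}}D\ue\bigr)\bigr|^2$. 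On $Q_r$ this is the quantity we want to estimate.

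Next I would treat the right hand side term by term. For the $GD_1$ terms, which have the form $\diverg\bigl((|D\ue|^2+\epsilon)^{\alpha/2}(D^2\ue D\ue-\Delta\ue D\ue)\bigr)$, integration by parts against $\phi^2$ moves the divergence onto $D(\phi^2)=2\phi D\phi$, producing a term bounded by $\frac{C}{r}\int_{Q_{2r}}\phi\,(|D\ue|^2+\epsilon)^{\alpha/2}|D^2\ue|\,|D\ue|$. This is then absorbed by Young's inequality: one splits $(|D\ue|^2+\epsilon)^{\alpha/2}|D^2\ue||D\ue|\le \delta(|D\ue|^2+\epsilon)^{\frac{p-2+s}{2}}|D^2\ue|^2 + C_\delta (|D\ue|^2+\epsilon)^{\alpha-\frac{p-2+s}{2}}|D\ue|^2$; with $\alpha=p-2+s$ the second factor is exactly $(|D\ue|^2+\epsilon)^{\frac{p-2+s}{2}}|D\ue|^2$, while with $\alpha=p-4+s$ the extra $\epsilon$ prefactor together with $\epsilon(|D\ue|^2+\epsilon)^{-1}\le1$ again yields $(|D\ue|^2+\epsilon)^{\frac{p-2+s}{2}}|D\ue|^2$ up to the $\epsilon$-weighted remainder; here one uses, as remarked after \eqref{eq:to-use-funineq-error}, that the $-1$ in the power is cancelled by estimating $\epsilon/(|D\ue|^2+\epsilon)\le1$. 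The $\delta$-term is absorbed into the left hand side for $\delta$ small relative to $\lambda'$.

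For the $GD_2$ terms I would use the second representation in \eqref{eq:good-parabolic}. The spatial-divergence piece $\diverg\bigl(\ue_t(|D\ue|^2+\epsilon)^{\frac{\alpha-\gamma}{2}}D\ue\bigr)$ cannot be integrated by parts directly against $\phi^2$ because $\ue_t$ would have to be differentiated; instead I substitute $\ue_t=(|D\ue|^2+\epsilon)^{\gamma/2}\bigl(\Delta_T\ue+P_\theta\ilN\ue\bigr)$ from the regularized equation, so that this becomes a first-order divergence of a quantity bounded by $(|D\ue|^2+\epsilon)^{\frac{\alpha}{2}}|D^2\ue||D\ue|$, handled exactly as the $GD_1$ terms after integrating by parts onto $D(\phi^2)$. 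The time-derivative pieces $-\bigl(\tfrac{(|D\ue|^2+\epsilon)^{(\alpha-\gamma+2)/2}}{\alpha-\gamma+2}\bigr)_t$ (valid since $s\ne\gamma-p$, which for $\alpha=p-2+s$ means $\alpha-\gamma+2\ne0$; for $\alpha=p-4+s$ one gets $\alpha-\gamma+2=p+s-\gamma-2$ and if this vanishes one instead picks up a $\ln$ term) are integrated by parts in $t$ against $\phi^2$: the $t$-derivative lands on $\phi^2$, giving $\frac{C}{r^2}\int_{Q_{2r}}(|D\ue|^2+\epsilon)^{\frac{p+s-\gamma}{2}}$ for the main term, with boundary contributions at the initial time slice $\{t=t_0-(2r)^2\}$ vanishing because $\phi$ does (or, for the $\epsilon w_4$ $\ln$-case, contributing the term $\epsilon\int_{B_{2r}}|\ln(|D\ue(x,t_0)|^2+\epsilon)|\,dx$ with $t_0$ the top of the cylinder — here one must be careful about which time boundary survives, since $Q_r(x_0,t_0)=B_r\times(t_0-r^2,t_0]$, and choose the cutoff to vanish near the bottom so only the top slice at $t_0$ appears, as in the statement). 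Collecting all pieces, absorbing the $\delta$-terms, and dividing by $\lambda'$ gives \eqref{eq:lemma-for-final-estimate}.

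\textbf{Main obstacle.} The delicate point is the bookkeeping of the two families of $\epsilon$-weighted remainder terms: one must verify that every term generated by the $\epsilon w_3 GD_1$ and $\epsilon w_4 GD_2$ structures either absorbs into the main terms via the inequality $\epsilon(|D\ue|^2+\epsilon)^{-1}\le1$ or lands precisely in the two $\epsilon$-weighted $\ln$-terms on the right of \eqref{eq:lemma-for-final-estimate} — in particular correctly identifying when the exponent $p+s-\gamma-2$ forces the logarithmic branch of $GD_2$, and ensuring the initial-time boundary term from the $\partial_t$ integration by parts is exactly the spatial $\ln$-integral over $B_{2r}$ at $t_0$ stated in the lemma. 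The analytic estimates themselves are routine Young's-inequality absorptions; the care is entirely in tracking powers and the time-boundary term.
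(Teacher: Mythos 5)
Your proposal is correct and follows essentially the same route as the paper's proof in the appendix: cutoff, multiplication by $\phi^2$, integration by parts of each divergence structure onto $D(\phi^2)$ and $(\phi^2)_t$, the bound $(|D\ue|^2+\epsilon)^{-\gamma/2}|\ue_t|\le C|D^2\ue|$ from the equation, Young's inequality with absorption, the estimate $\epsilon/(|D\ue|^2+\epsilon)\le1$ for the $w_3,w_4$ terms, and the separate logarithmic branch when $s=\gamma-p+2$ with the surviving top-slice term at $t_0$. One small inaccuracy: integrating $\int\diverg(\ue_t(\cdots)D\ue)\,\phi^2$ by parts never requires differentiating $\ue_t$ (the derivative lands only on $\phi^2$), so your detour of substituting the equation \emph{before} integrating by parts is unnecessary, though it leads to the same bound as the paper's order of operations.
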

The last two integrals on the right hand side of \eqref{eq:lemma-for-final-estimate} do not appear if $s\neq\gamma-p+2$. The source of such an error terms in the case $s=\gamma-p+2$ is the logarithm in Lemma \ref{lem:regularized-time-derivative-structure} when $\beta=-2$.
\section{Smooth case with non-zero gradient}\label{sec:smcase}
Let $1<p<\infty$ and $-1<\gamma<\infty$. In this section we assume that $u\colon\Om_T\to\R$ is a smooth solution to 
\begin{equation} \label{eq:PDE}
   u_t-|Du|^{\gamma}\big(\Delta u+(p-2)\ilN u\big)=0,
\end{equation}
such that $Du\neq 0$. That is, $u$ does not have critical points in space.
Our main result in this case is the following a priori estimate. Usually extending a regularity result to a general nonsmooth case is quite straightforward.
\begin{proposition} \label{prop:smooth}
Let $n\geq 2$, $1<p<\infty$ and $-1<\gamma<\infty$.
Let $u\colon\Om_T\to\R$ be a smooth solution to \eqref{eq:PDE} such that $Du\neq0$. If
\begin{align} \label{eq:Range-of-s}
    s>\max\Big\{-1-\frac{p-1}{n-1},\gamma+1-p\Big\},
\end{align}
then for any concentric parabolic cylinders $Q_r\subset Q_{2r}\Subset\Om_T$, we have the estimate
\begin{equation}
    \int_{Q_r}\abs{D(\abs{Du}^{\frac{p-2+s }{2}} Du)}^2dxdt
    \leq
    \frac{C}{r^2}\Big(
    \int_{Q_{2r}}|Du|^{p+s}dxdt
    +
    \int_{Q_{2r}}|Du|^{p+s-\gamma}dxdt
    \Big),
\end{equation}
where $C=C(n,p,\gamma,s)>0$.
\end{proposition}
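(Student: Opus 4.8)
Since $Du\neq0$ everywhere, the entire toolbox of Section~\ref{sec:Toolbox} applies in the formal case $\epsilon=0$, and the plan is simply to run it. First I would specialize \eqref{eq:S} by taking $w_3=w_4=0$, so that the two error divergence structures disappear and $S=w_1\,GD_1(p-2+s)+w_2\,GD_2(p-2+s)$ (see \eqref{eq:good-elliptic}, \eqref{eq:good-parabolic}); correspondingly, with $\theta\equiv1$, $\kappa\equiv0$ and $P_\theta\equiv p-1$, the coefficients in \eqref{eq:c1} become genuine constants, $c_1=w_1$, $c_2=w_1(p-2+s)$, $c_3=w_2$, $c_4=w_2(p-2+s-\gamma)$, depending only on $w_1,w_2$ (and $p,\gamma,s$). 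The key identity \eqref{eq:S-shortly-written} then reads $|Du|^{p-2+s}\{w_1|D^2u|^2+w_1(p-2+s)|D_T|Du||^2+R\}=S$ a.e.\ in space, with $R$ the quadratic form in $(\Delta_T u,\ilN u)$ recorded after \eqref{eq:S-shortly-written}. Next, assuming $w_1>0$, I would insert the fundamental inequality (Lemma~\ref{lem:FundamentalInequality}) into $|D^2u|^2$ and collect the tangential contributions; this produces $|Du|^{p-2+s}\{w_1(p+s)|D_T|Du||^2+Q\}\le S$, where $Q=\la\bar x,M\bar x\ra$ with $\bar x=(\Delta_T u,\ilN u)^T$ is exactly the weighted quadratic form of \eqref{eq:Roadmap-weighted-ineq-formal}. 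The coefficient $w_1(p+s)$ is positive, since $s>-1-\tfrac{p-1}{n-1}\ge-p$ holds for every $n\ge2$.

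The heart of the matter, and the step I expect to be the main obstacle, is to choose $w_1>0$ and $w_2\in\R$ making the symmetric matrix $M$ positive definite; this is precisely the positivity statement referenced as Lemma~\ref{lem:smooth-key-lemma}. By homogeneity one may set $w_1=1$ and treat $t:=w_2$ as the free parameter, and Sylvester's criterion asks for $M_{11}=t-\tfrac{n-2}{n-1}>0$ together with $\det M>0$. Using $p-1>0$ together with $p-1+s-\gamma>0$ and $p+s>0$ (both consequences of \eqref{eq:Range-of-s}) one checks that $\det M$, as a function of $t$, is a downward parabola, so $\det M>0$ can hold only on a bounded interval and forces a positive discriminant; carrying out this discriminant computation one sees that a value of $t$ satisfying both Sylvester conditions exists if and only if \eqref{eq:Range-of-s} holds, the bound $s>-1-\tfrac{p-1}{n-1}$ and the bound $s>\gamma+1-p$ each being responsible for one side of the obstruction. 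This argument is elementary but somewhat delicate; I would organize it by first fixing the sign of $w_2$ forced by $M_{22}=w_2(p-1)(p-1+s-\gamma)>0$ and only then optimizing the off-diagonal entry.

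Once such weights $w_1,w_2$ (and $w_3=w_4=0$) are in hand, the hypotheses of Lemma~\ref{lem:ad-hoc} are satisfied with $\epsilon=0$, and that lemma upgrades the previous estimate to the pointwise bound $\lambda|Du|^{p-2+s}|D^2u|^2\le S$ with some $\lambda>0$. Finally, since $s>\gamma+1-p>\gamma-p$, Lemma~\ref{lem:lemma-for-final-estimate} applies (with $\epsilon=0$, so its two logarithmic boundary integrals vanish identically) and yields $\int_{Q_r}\abs{D(|Du|^{\frac{p-2+s}{2}}Du)}^2\,dx\,dt\le\frac{C}{r^2}(\int_{Q_{2r}}|Du|^{p-2+s}|Du|^2\,dx\,dt+\int_{Q_{2r}}|Du|^{p+s-\gamma}\,dx\,dt)$, which is the asserted estimate after simplifying $|Du|^{p-2+s}|Du|^2=|Du|^{p+s}$ and $(|Du|^2)^{(p-2+s)/4}=|Du|^{(p-2+s)/2}$. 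Thus the only genuinely new work beyond Section~\ref{sec:Toolbox} is the positive-definiteness step, and that is where the precise range \eqref{eq:Range-of-s} originates.
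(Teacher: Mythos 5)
Your proposal is correct and follows essentially the same route as the paper: specialize $S$ to $w_3=w_4=0$, apply the fundamental inequality to reach $|Du|^{p-2+s}\{w_1(p+s)|D_T|Du||^2+Q\}\le S$, reduce everything to the positive definiteness of $M$ via Sylvester's criterion and a discriminant computation in $w_2$ (this is exactly Lemma \ref{lem:smooth-key-lemma} together with Lemma \ref{lem:Range-of-w2} in the appendix), and then conclude with Lemmas \ref{lem:ad-hoc} and \ref{lem:lemma-for-final-estimate}. The only cosmetic difference is that the paper treats $n=2$ separately with the explicit choice $w_1=2p-2+s-\gamma$, $w_2=p+s$ that makes the mixed term vanish, whereas you run the $w_1=1$ discriminant argument uniformly for all $n\ge 2$; both work, though note that when $s=\gamma$ the determinant degenerates from a downward parabola to a linear function of $w_2$, a case the paper handles by a limiting argument on the roots.
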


The following Lemma, Lemma \ref{lem:smooth-key-lemma}, is the main ingredient in the proof of Proposition \ref{prop:smooth}. Thus we postpone the proof of Proposition \ref{prop:smooth} until after the proof of  Lemma \ref{lem:smooth-key-lemma}.

 In the following lemma we consider the weighted sum
\begin{equation} \label{eq:definition-of-S-smooth}
   S=w_1GD_1(p-2+s)+w_2GD_2(p-2+s)
\end{equation}
where $w_1,w_2\in\R$, and the notation was defined in \eqref{eq:S}.  
Note that since $\epsilon=0$ in this section, the terms with weights $w_3$ and $w_4$ in \eqref{eq:S} disappear.
The purpose of Lemma \ref{lem:smooth-key-lemma} is to show that under restriction \eqref{eq:Range-of-s}, we can find positive weights $w_1=w_1(n,p,\gamma,s)>0$ and $w_2=w_2(n,p,\gamma,s)>0$ such that $S$ has a suitably nonnegative lower bound to make Lemma \ref{lem:ad-hoc} applicable. Moreover, by the proof of Lemma \ref{lem:smooth-key-lemma} and Sylvester's condition, we can choose the value $c=c(n,p,\gamma,s)>0$ small enough such that for $M$ in the proof it holds
$$M_{11}\geq c
    \quad\text{and}\quad
    \det(M)\geq c.$$
The proof of Proposition \ref{prop:smooth} is then finished by using Lemma \ref{lem:lemma-for-final-estimate}.

\begin{lemma} \label{lem:smooth-key-lemma}
Let $n\geq 2$, $1<p<\infty$ and $-1<\gamma<\infty$.
Let $u\colon\Om_T\to\R$ be a smooth solution to \eqref{eq:PDE} such that $Du\neq0$. If \eqref{eq:Range-of-s} holds, then we can select $w_1=w_1(n,p,\gamma,s)>0$ and $w_2=w_2(n,p,\gamma,s)>0$, such that
\begin{align*}
    |Du|^{p-2+s}\Big\{c|D_T|Du||^2+Q\Big\}\le S
\end{align*}
where $c=c(n,p,\gamma,s)>0$ and
$$ Q=\la \bar{x},M\bar{x}\ra $$
with 
$\bar{x}=(\Delta_Tu,\ilN u)^T\in\R^2$ and 
a  uniformly bounded positive definite (with a uniform constant) symmetric matrix  $M=M(n,p,\gamma,s)\in \R^{2\times 2}$.
\end{lemma}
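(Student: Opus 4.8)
\textbf{Proof plan for Lemma \ref{lem:smooth-key-lemma}.}
The plan is to start from the explicit formulas \eqref{eq:c1} for $c_1,\dots,c_4$ and the matrix $N$ from \eqref{eq:S-shortly-written}, specialized to $\epsilon=0$, $w_3=w_4=0$, so that $\theta\equiv 1$, $P_\theta = p-1$, and $c_1=w_1$, $c_2=w_1(p-2+s)$, $c_3=w_2$, $c_4=w_2(p-2+s-\gamma)$. Plugging these into the entries $N_{11}=c_2-c_1$, $N_{12}=\tfrac12\big(c_3P_\theta + (c_3+c_4) - (2c_1+c_2)\big)$, $N_{22}=(c_3+c_4)P_\theta - c_1$ gives
\begin{align*}
    N_{11} &= w_1(p-3+s),\\
    N_{12} &= \tfrac12\Big(w_2\big((p-1)+(2p-3+s-\gamma)\big) - w_1(p+s)\Big),\\
    N_{22} &= w_2(p-1)(p-1+s-\gamma) - w_1.
\end{align*}
I would identify $M$ with this matrix $N$ (after absorbing the $|D_T|Du||^2$-coefficient, which equals $c_2-c$ in the notation of Lemma \ref{lem:ad-hoc}; note here $c_2 = w_1(p-2+s) > 0$ when $s>-1$ so a positive $c$ below it is available). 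Then the whole lemma reduces to the purely algebraic assertion: under the range condition \eqref{eq:Range-of-s}, one can choose $w_1,w_2>0$ making $M$ positive definite with a bound depending only on $n,p,\gamma,s$.

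The key step is to verify positive definiteness via Sylvester's criterion, i.e. $N_{11}>0$ and $\det N = N_{11}N_{22} - N_{12}^2 > 0$, and then to note that boundedness is automatic once $w_1,w_2$ are fixed. The condition $s > \gamma+1-p$ makes $N_{22}$ positive for suitable $w_2$ relative to $w_1$ (the coefficient $(p-1)(p-1+s-\gamma)$ is positive), and then scaling $w_2$ large compared to $w_1$ pushes $\det N$ positive provided the ``cross'' obstruction from $N_{12}^2$ is dominated. The role of the other bound $s > -1-\frac{p-1}{n-1}$ is subtler in the general $n\ge 2$ case: it should enter through the coefficient $w_2 - \frac{n-2}{n-1}w_1$ of $(\Delta_T u)^2$ in \eqref{eq:Roadmap-weighted-ineq-formal} — that is, the $N_{11}$-entry in the genuinely $n$-dimensional bookkeeping is $w_2 - \frac{n-2}{n-1}w_1$ rather than $c_2-c_1$, because the fundamental inequality contributes $\frac{1}{n-1}(\Delta_Tu)^2$ with the factor $\frac{1}{n-1}$. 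So the right reading is: the matrix $M$ to be checked has $M_{11} = w_2 - \frac{n-2}{n-1}w_1$ (coefficient of $(\Delta_Tu)^2$ after using \eqref{eq:fundineq} on part of $|D^2u|^2$), $M_{22} = w_2(p-1)(p-1+s-\gamma)$, $M_{12} = \tfrac12\big(w_2(2p-2+s-\gamma) - w_1(p+s)\big)$, and one checks $M_{11}>0$, $\det M>0$.

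Concretely I would set $w_1 = 1$ and treat $w_2$ as a free parameter, reduce $\det M > 0$ to a quadratic inequality in $w_2$, and show that \eqref{eq:Range-of-s} guarantees this quadratic has a positive root — equivalently its discriminant and leading behaviour cooperate. The discriminant computation is where the two lower bounds on $s$ both surface: $s > -1 - \frac{p-1}{n-1}$ controls $M_{11}>0$ (choosing $w_2$ not too small), while $s > \gamma+1-p$ controls $M_{22}>0$ and the sign of the discriminant of the $\det M$ quadratic; together they carve out exactly an open interval of admissible $w_2$, from which any interior choice gives a uniform constant $c=c(n,p,\gamma,s)>0$ with $M_{11}\ge c$ and $\det M\ge c$.

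\textbf{Main obstacle.} The real work is the discriminant/sign analysis of $\det M$ as a quadratic in $w_2$: showing that precisely the hypothesis \eqref{eq:Range-of-s} — and nothing weaker — makes the feasible set of $(w_1,w_2)$ nonempty. This is a finite but delicate computation: one must expand $4 M_{11}M_{22} - 4M_{12}^2$, collect it as $a w_2^2 + b w_2 + c_0$ with $a,b,c_0$ polynomials in $p,\gamma,s,n$ (after normalizing $w_1=1$), and verify that under \eqref{eq:Range-of-s} there is a $w_2>0$ with $a w_2^2 + b w_2 + c_0 < 0$ (or the appropriate sign), i.e. that the parabola dips below zero in the positive half-line. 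I would expect the cleanest route is to exhibit an \emph{explicit} choice of $w_2$ as a function of $p,\gamma,s,n$ — for instance something like a convex combination calibrated so that $M_{12}$ is small — and then just substitute and simplify, rather than arguing abstractly about discriminants; this keeps the constant $c$ manifestly of the asserted form $c(n,p,\gamma,s)$, and boundedness of $M$ is then immediate since $w_1,w_2$ are fixed rational functions of the parameters.
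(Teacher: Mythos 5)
Your plan matches the paper's proof essentially step for step: after applying the fundamental inequality to the $|D^2u|^2$ term in the identity for $S$, the paper arrives at exactly the matrix you describe in your ``right reading'' (with $M_{11}=w_2-\frac{n-2}{n-1}w_1$, $M_{12}=\tfrac12(w_2(2p-2+s-\gamma)-w_1(p+s))$, $M_{22}=w_2(p-1)(p-1+s-\gamma)$), handles $n=2$ by the explicit choice $w_1=2p-2+s-\gamma$, $w_2=p+s$ that makes $M_{12}$ vanish --- precisely your ``calibrated so that $M_{12}$ is small'' alternative --- and for $n\geq 3$ sets $w_1=1$ and carries out the quadratic-in-$w_2$ discriminant analysis you outline (deferred to Lemma~\ref{lem:Range-of-w2} in the appendix). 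The only blemishes are a small arithmetic slip in your intermediate $N_{12}$ (the coefficient of $w_2$ should be $(p-1)+(p-1+s-\gamma)=2p-2+s-\gamma$, which is what you correctly use in the final $M_{12}$) and the fact that the discriminant computation itself --- the genuinely delicate part, which you rightly flag as the main obstacle --- is left unexecuted.
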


\begin{proof}
Similarly as in \eqref{eq:replace-time-derivatives}, recalling that $\eps=0$, by expressions \eqref{eq:c1}, we arrive at
\begin{equation} \label{eq:S-and-PDE}
\begin{aligned}
   |Du|^{p-2+s}&
    \Big\{
    w_1|D^2u|^2+w_1(p-2+s)|D_T|Du||^2+(w_2-w_1)(\Delta_T u)^2 \\
    &\quad
    +\big(w_2(p-1)(p-1+s-\gamma)-w_1\big)(\ilN u)^2 \\
    &\quad
    +\big(w_2(2p-2+s-\gamma)-w_1(p+s)\big)\Delta_Tu\ilN u
    \Big\}=S.
\end{aligned}
\end{equation}
We estimate $|D^2u|^2$ on the left hand side of \eqref{eq:S-and-PDE} from below by the fundamental inequality, Lemma \ref{lem:FundamentalInequality}. This yields the following lower bound for $S$
\begin{equation*}
\begin{aligned}
    &|Du|^{p-2+s}
    \Big\{
    w_1(p+s)|D_T|Du||^2
    +Q
    \Big\}\le S,
\end{aligned}
\end{equation*}
where
\begin{align*}
    Q:
    &=
    \Big(w_2- \frac{n-2}{n-1} w_1\Big)(\Delta_T u)^2
    +w_2(p-1)(p-1+s-\gamma)(\ilN u)^2 \\
    &\quad
    +\big(w_2(2p-2+s-\gamma)-w_1(p+s)\big)\Delta_Tu\ilN u.
\end{align*}
We write $Q$ more compactly as
$$ Q=\la\bar{x},M\bar{x}\ra, $$
where $\bar{x}=(\Delta_T u,\ilN u)^T\in \R^2$ is a vector and
$$ M:=
\begin{bmatrix}
w_2- \displaystyle{\frac{n-2}{n-1}} w_1 & \displaystyle{\frac{1}{2}}\big(w_2(2p-2+s-\gamma)-w_1(p+s)\big) \\
\displaystyle{\frac{1}{2}}\big(w_2(2p-2+s-\gamma)-w_1(p+s)\big) & w_2(p-1)(p-1+s-\gamma)
\end{bmatrix}
$$
is a symmetric $2\times2$-matrix.
We claim that under assumption \eqref{eq:Range-of-s} we can choose $w_1,w_2\in\R$ such that $M$ is uniformly bounded positive definite (with a uniform
constant).

If $n=2$, this is easy to see by selecting
$$ w_1=2p-2+s-\gamma \quad\text{and}\quad
w_2=p+s, $$
because then
$$ M=
\begin{bmatrix}
p+s & 0 \\
0 & (p+s)(p-1)(p-1+s-\gamma)
\end{bmatrix}
$$
and hence
$$ Q=(p+s)\left((\Delta_T u)^2+(p-1)(p-1+s-\gamma)(\ilN u)^2\right). $$
In other words, with such choice of $w_1$ and $w_2$, the mixed term $\Delta_T u\ilN u$ vanishes. Notice that \eqref{eq:Range-of-s} implies that $w_1>0$ and $w_2>0$.

For the higher dimensional case $n\geq3$, we set $w_1=1$ and  find $w_2=w_2(n,p,\gamma,s)>0$ such that $M$ is uniformly bounded positive definite (with a uniform constant). This is possible precisely when \eqref{eq:Range-of-s} holds: 
Since the proof is quite tedious, we postpone it to Lemma \ref{lem:Range-of-w2} in the appendix. 
\end{proof}

We are ready to give the proof of Proposition \ref{prop:smooth}.

\begin{proof}[Proof of Proposition \ref{prop:smooth}]
Let us fix $w_1=w_1(n,p,\gamma,s)>0$ and $w_2=w_2(n,p,\gamma,s)>0$ according to Lemma \ref{lem:smooth-key-lemma}. 
Lemma \ref{lem:ad-hoc} is then applicable because $w_1>0,\ w_3=0$ implies that $c_1=w_1+w_3 \kappa>0$ 
and the conclusion of Lemma \ref{lem:smooth-key-lemma} implies that \eqref{eq:lower-bound-S} holds.
Therefore, by Lemma \ref{lem:ad-hoc} there exists $\lambda=\lambda(n,p,\gamma,s,w_1,w_2,w_3,w_4)>0$
such that
$$ \lambda|Du|^{p-2+s}|D^2u|^2 \le S$$
in $\Om_T$. 
Now the desired estimate follows from Lemma \ref{lem:lemma-for-final-estimate}.
\end{proof}
Range (\ref{eq:Range-of-s}) in Proposition \ref{prop:smooth}, is optimal in the following sense: In the elliptic case, \cite{dongpzz20} and \cite{sarsa20}, the best known range is $s>-1-\frac{p-1}{n-1}$.
On the other hand, Example \ref{counterexample} below shows that in the parabolic case we cannot hope to reach any better range than $s>\gamma+1-p$. 
A counterexample of this type was used in \cite[Section 1.3]{dongpzz20} for the standard $p$-parabolic equation. 
\begin{example}[Counterexample]\label{counterexample}
Let $u\colon\Rn\times(0,\infty)\to\R$ be given by
$$ u(x,t):=Ct+|x_1|^{\alpha} $$
for some $C\in\R$ and $\alpha>0$.
Note that
$$ |Du|^{\gamma}\plN u=\alpha^{\gamma+1}(\alpha-1)(p-1)|x_1|^{(\alpha-1)(\gamma+1)-1}. $$
Hence, if
$$ \alpha=1+\frac{1}{\gamma+1}
\quad\text{and}\quad C=\alpha^{\gamma+1}(\alpha-1)(p-1) $$
then $u$ solves \eqref{eq:GeParabEq} in the classical sense whenever $x_1\neq0$. 
Indeed, by a direct computation, we have
$$u_{x_1}=\alpha|x_1|^{\alpha-2}x_1, \quad u_{x_i}=0 \quad {\rm for}\quad i=2,\cdots,n,$$
and
$$u_{x_1x_1}=\alpha(\alpha-1)|x_1|^{\alpha-2}, \quad u_{x_ix_j}=0,$$
where $i,j=1,\cdots,n$ and $i$ and $j$ are not both 1.

Next we verify that the function $u$ is a viscosity solution in the whole $\Rn$ according to Definition \ref{def:visc} also at those points where $x_1=0$. Whenever $x_1\neq 0$, $x_0=(x_1,\ldots,x_n)$, and the test function $\varphi$ touches $u$ at $(x_0,t_0)$ from below (the argument is analogous from above), we may use the facts that 
\begin{align*}
    D\varphi (x_0,t_0)=Du(x_0,t_0)\neq 0,\quad 
    \phi_t(x_0,t_0)=u_t(x_0,t_0)=0 
\end{align*}
and
$$D^2\varphi (x_0,t_0)\le D^2u(x_0,t_0).$$

Let us consider the points where $x_1=0$. We study the degenerate case $\gamma>0$ and the singular case $-1<\gamma\leq 0$ separately.
If $x_1=0$ and $\gamma>0$, then there are no test functions touching $u$ from above and for a test function $\varphi$ touching from below, we have $D\varphi (x_0,t_0)=Du(x_0,t_0)=0$  and 
$$
\varphi_t(x_0,t_0)=u_t(x_0,t_0)=C.
$$ 
Since $C>0$, the function $u$ is a viscosity supersolution. 

The given function is also a viscosity solution whenever $-1<\gamma\le 0$: the proof for the supersolution property is the same as in the degenerate case above. It is also a subsolution because  (similarly to the degenerate case) there are no admissible test functions touching $u$ from above.
We provide a detailed proof of this fact. Thriving for a contradiction, suppose that there is an admissible test function $\vp$ touching $u$ at $(x_0,t_0)$ with $x_0=(0,\ldots,0)$ (for simplicity) from above. Then necessarily
\begin{align*}
    \vp_t(x_0,t_0)=C>0.
\end{align*}
 By the definition of a viscosity solution it holds that 
\begin{align*}
   \phi(x,t)=u(x_0,t_0)+\vp_t(x_0,t_0)(t-t_0)+f(\abs x)+\sigma(t-t_0)
\end{align*}
is an admissible test function touching strictly from above. By strict touching and regularity of $u$, by translating with respect to $x_1$ and lifting we may assume that $\phi$ touches $u$ at a point $(x,t_0)$, $x=(\eps,0,\ldots,0)$, with small $\eps>0$. Also observe that by an approximation, we could assume that $\sigma$ is a $C^2$ function, but we omit this step as well.   Also recall the notation $g(y)=f(\abs y)$ and that 
\begin{align}
    \lim_{y\to 0,y\neq 0}F\big(Dg(y),D^2g(y)\big)=0.
\end{align}
Then by this and the counter assumption it holds at a point $(x,t_0)$ for $x$ close enough $x_0$ that
\begin{align} \label{eq:counterexample-1}
    \phi_t(x,t_0)-F\big(D\phi(x,t_0),D^2\phi(x,t_0)\big)=\vp_t(x_0,t_0)-F\big(Dg(x),D^2g(x)\big)>0.
\end{align}
On the other hand, since $u$ is now $C^2$-function with the explicit formula, we have 
\begin{equation} \label{eq:counterexample-2}
\begin{aligned}
    \phi_t(x,t_0)-F\big(D\phi(x,t_0),D^2\phi(x,t_0)\big)
    &=\vp_t(x_0,t_0)-F\big(Dg(x),D^2g(x)\big) \\
    &=u_t(x,t_0)-F\big(Dg(x),D^2g(x)\big) \\
    &\le u_t(x,t_0)-F\big(Du(x,t_0),D^2u(x,t_0)\big)=0,
\end{aligned}
\end{equation}
which contradicts inequality \eqref{eq:counterexample-1}.

In the above inequality we used the fact that since $\phi$ touches $u$ from above at $(x,t_0)$ we have $D^2g(x)\ge D^2 u(x,t_0)$ and $Dg(x)=Du(x,t_0)\neq 0$ and thus
\begin{align*}
   F\big(Dg(x),D^2g(x)\big)\ge  F\big(Du(x,t_0),D^2u(x,t_0)\big).
\end{align*}

We study the local $W^{1,2}$-regularity of $|Du|^{\frac{p-2+s}2}Du$ for $s\in\R$ and see what kind of restrictions for $s$ arise. We have
\begin{align*}
 \left|D(|Du|^{\frac{p-2+s}2}Du)\right|
&=\frac{1}{2}\alpha^{\frac{p+s}{2}}(\alpha-1)(p+s)|x_1|^{\frac{(\alpha-1)(p+s)}{2}-1}\\
&=C(p,s,\gamma)|x_1|^{\frac{ p+s }{2(\gamma+1)}-1}.
\end{align*}
The function $D(|Du|^{\frac{p-2+s}2}Du)$ locally belongs to $L^2(\Rn\times(0,\infty))$ if and only if
$$ 2\Big(\frac{ p+s }{2(\gamma+1)}-1\Big)>-1, $$
that is,
$$ s>\gamma+1-p. $$
Observe that range condition (\ref{eq:Range-of-s}) gives this in the plane, but in higher dimensions we have an additional restriction, which is the same restriction as in the elliptic case.

When $s=2-p$, then for $W^{2,2}$-regularity, the range $$-1<\gamma<1$$ is sharp in the plane.
\end{example}
\begin{remark}
Also the case $n=1$ holds. Recall that the key point is identity \eqref{eq:RoadmapBasicIdentity}, that is,
\begin{equation}
\begin{aligned}
    &(|D\ue|^2+\epsilon)^{\frac{p-2+s}{2}}
    \Big\{|D^2\ue|^2+(p-2+s)\frac{|D^2\ue D\ue|^2}{|D\ue|^2+\epsilon}
    +s(p-2)\frac{(\il \ue)^2}{(|D\ue|^2+\epsilon)^2} \\
    &\quad
    +(p-2-\gamma)(|D\ue|^2+\epsilon)^{- {\gamma}/{2}}\ue_t\frac{\il \ue}{|D\ue|^2+\epsilon} \Big\} \\
    &=
    \diverg\big((|D\ue|^2+\epsilon)^{\frac{p-2+s}{2}}AD^2\ue D\ue \big)
    -\frac{\big((|D\ue |^2+\epsilon)^{\frac{p+s-\gamma}{2}}\big)_t}{p+s-\gamma},
\end{aligned}
\end{equation}
provided that $s\neq\gamma-p$. If $n=1$ this reduces to
\begin{equation} \label{eq:one-dim-BasicIdentity}
\begin{aligned}
    &(|D\ue|^2+\epsilon)^{\frac{p-2+s}{2}}
    \Big\{1+(p-2+s)\theta+s(p-2)\theta^2 \\
    &\quad
    +(p-2-\gamma)\big((p-2)\theta+1\big)\theta\Big\}|D^2\ue|^2 \\
    &=
    \diverg\big((|D\ue|^2+\epsilon)^{\frac{p-2+s}{2}}(D^2\ue D\ue-\Delta\ue D\ue) \big) \\
    &\quad
    +\ue_t\diverg\big((|D\ue|^2+\epsilon)^{\frac{p-2+s-\gamma}{2}}D\ue\big).
\end{aligned}
\end{equation}
The left hand side of \eqref{eq:one-dim-BasicIdentity} is
\begin{align*}
    &(|D\ue|^2+\epsilon)^{\frac{p-2+s}{2}}
    \Big\{(p-1)(p-1+s-\gamma)\theta^2
    +(2p-2 +s-\gamma)\theta\kappa +\kappa^2\Big\}|D^2\ue|^2 \\
    &\quad
    \geq \lambda (|D\ue|^2+\epsilon)^{\frac{p-2+s}{2}}|D^2\ue|^2,
\end{align*}
for some constant $\lambda=\lambda(p,\gamma,s)>0$, provided that $s>\gamma+1-p$. From this it is easy to derive the desired integral estimate. We conclude that Proposition \ref{prop:smooth} holds in case $n=1$ without the additional smoothness assumptions for $u$, and with the interpretation
$$ s>\max\Big\{-1-\frac{p-1}{n-1},\gamma+1-p\Big\}=\max\{-\infty,\gamma+1-p\}=\gamma+1-p. $$
\end{remark}
\section{Removing the smoothness assumption}
\label{sec:full}
Section \ref{sec:smcase} gives a formal derivation of the regularity estimate under the assumption that the gradient of a solution does not vanish. In this section, we remove the additional assumption in a certain range of parameters by regularizing the equation and then finally pass to a limit to obtain the result 
for the original equation.
\subsection{Regularization}\label{sec:reg-pro}
 Let $\ue\colon \Omega_T\rightarrow\R$ be a smooth solution to the equation 
\begin{equation} \label{eq:PDEwithGeneralDegeneracy}
    \ue_t-(|D\ue|^2+\epsilon)^{\gamma/2}\Big(\Delta\ue+(p-2)\frac{\il\ue}{|D\ue|^2+\epsilon}\Big)=0
\end{equation}
where $1<p<\infty$, $-1<\gamma<\infty$, and $\ez>0$ is a regularization parameter.
As explained in Section \ref{sec:eps-plan}, the mismatch between the second order differential quantities in the fundamental inequality and the regularized equation and consequently in the basic estimate causes that some of the formal calculations do not work as such even if most of the steps work for general $s$. In particular positive definiteness of the quadratic form becomes an issue. 

 In this section, partly for the convenience of the reader, we have decided to limit ourselves to the planar case $n=2$ and focus on the square-integrability of the second order derivatives $D^2u$, that is, we consider the case $s=2-p$.  In this case the range condition in \eqref{range:expected-2} that is
 \begin{align*}
      s>\max\Big\{-1-\frac{p-1}{n-1},\gamma+1-p\Big\}
 \end{align*}
 reduces to 
\begin{equation*}
    1<p<\infty \quad\text{and}\quad -1<\gamma<1.
\end{equation*}
Then range (i) 
\begin{equation} \label{eq:Roadmap-range-i}
    1<p\leq5 \quad\text{and}\quad -1<\gamma<1,
\end{equation} 
in Theorem \ref{resultforwholep} and the Proposition below
will be obtained by a straightforward generalization of the formal calculation ($\epsilon=0$). 
That is, we consider the sum
\begin{align} \label{eq:Roadmap-S-plane}
    S=
    w_1\diverg (D^2\ue D\ue-\Delta \ue D\ue)
    +w_2\ue_t\diverg\big((|D\ue|^2+\ez)^{-\frac{\gamma}{2}}D\ue\big),
\end{align}
and show that if \eqref{eq:Roadmap-range-i} holds, then we can find $w_1,w_2>0$ such that
\begin{align} \label{eq:Roadmap-lower-bound-S-regularized}
c \abs{D_T\abs{D\ue}}^2 +Q\le S,
\end{align}
where $c>0$ and $Q$ is positive definite. For range (ii) in the Proposition below which is the same as in Theorem \ref{resultforwholep}, we instead consider the full $S$ as defined in \eqref{eq:Roadmap-S-with-errorterms-alpha-is-0} or equivalently in \eqref{eq:S}.

Our main result for $\ue$ is the following.

\begin{proposition} \label{prop:Regularized-resultforwholep}
Let $n=2$. Let $\ue\colon \Omega_T\to\R$ be a smooth solution to \eqref{eq:PDEwithGeneralDegeneracy}.
If $p$ and $\gamma$ satisfy one of the following conditions:
\begin{enumerate}
    \item[(i)] $1<p\leq5$ and $-1<\gamma<1$; or
    \item[(ii)] $1<p<\infty$ and $-1<\gamma<\sqrt{2}-\frac12$,
\end{enumerate}
then for any concentric parabolic cylinders $Q_r\subset Q_{2r}\Subset \Omega_T$ with center point $(x_0,t_0)\in\Om_T$, we have the estimate
\begin{equation}
    \begin{aligned}
    \int_{Q_r}|D^2\ue|^2dxdt
    &\leq
    \frac{C}{r^2}\Big(
    \int_{Q_{2r}}|D\ue|^2dxdt 
    +\int_{Q_{2r}}(|D\ue|^2+\epsilon)^{\frac{2-\gamma}{2}}dxdt\Big) \\
    &\quad
    +\epsilon\Big(\frac{C}{r^2}\int_{Q_{2r}}\big|\ln(|D\ue|^2+\epsilon)\big|dxdt
    +C\int_{B_{2r}}\big|\ln(|D\ue(x,t_0)|^2+\epsilon)\big|dx
    \Big)
    \end{aligned}    
\end{equation}
where $C=C(p,\gamma)>0$.
\end{proposition}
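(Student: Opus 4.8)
The plan is to follow the roadmap laid out in Sections \ref{sec:Plan} and \ref{sec:Toolbox}, adapting the smooth calculation of Section \ref{sec:smcase} to the regularized setting with $n=2$ and $s=2-p$. The starting point is the key identity \eqref{eq:S-shortly-written}, which for any smooth solution $\ue$ to \eqref{eq:PDEwithGeneralDegeneracy} reads
\begin{equation*}
    (|D\ue|^2+\epsilon)^{\frac{p-2+s}{2}}\Big\{c_1|D^2\ue|^2+c_2|D_T|D\ue||^2+\la\bar x,N\bar x\ra\Big\}=S,
\end{equation*}
with $\bar x=(\Delta_T\ue,\ilN\ue)^T$, the coefficients $c_1,\dots,c_4$ given by \eqref{eq:c1} in terms of the weights $w_1,w_2,w_3,w_4$ and $\theta\in[0,1)$, and $S$ the weighted sum of good divergence structures \eqref{eq:S}. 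The goal is: given $p,\gamma$ in range (i) or range (ii), choose the weights so that $c_1>0$ uniformly and so that the lower bound \eqref{eq:lower-bound-S} of Lemma \ref{lem:ad-hoc} holds, i.e.\ after applying the fundamental inequality (Lemma \ref{lem:FundamentalInequality}, which is an \emph{equality} since $n=2$) to a suitable portion of $c_1|D^2\ue|^2$, the leftover quadratic form $Q=\la\bar x,M\bar x\ra$ is uniformly positive definite and the coefficient $c$ of $|D_T|D\ue||^2$ is positive — all \emph{uniformly in} $\theta\in[0,1)$. Once this is done, Lemma \ref{lem:ad-hoc} yields the pointwise bound $\lambda(|D\ue|^2+\epsilon)^{\frac{p-2+s}{2}}|D^2\ue|^2\le S$, and Lemma \ref{lem:lemma-for-final-estimate} (with $s=2-p$, so $p-2+s=0$ and $p+s-\gamma=2-\gamma$, and noting $s=\gamma-p+2$ only when $\gamma=0$, which produces the logarithmic error terms) converts this into the claimed integral estimate.

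Concretely I would split into the two cases. For range (i), $1<p\le 5$, $-1<\gamma<1$, I take $w_3=w_4=0$, so $c_1=w_1$, $c_3=w_2$ are constants and $c_2,c_4$ carry the factor $\theta$; this is the ``straightforward generalization of the formal calculation'' advertised after \eqref{eq:Roadmap-range-i}. The task reduces to checking that the resulting $M=M(\theta)$, which is affine in $\theta$, is positive definite for all $\theta\in[0,1]$ — by convexity of the relevant Sylvester minors (or just checking the two endpoints and using that $\det M(\theta)$ and $M(\theta)_{11}$ are, after clearing denominators, quadratics in $\theta$ with controllable sign) it suffices to verify positivity at $\theta=0$ and $\theta=1$. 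At $\theta=1$ one recovers exactly the smooth-case matrix from Lemma \ref{lem:smooth-key-lemma} with $n=2$, which with the choice $w_1=2p-2+s-\gamma=p-\gamma$, $w_2=p+s=2$ is diagonal with entries $2$ and $2(p-1)(1-\gamma)$, positive precisely when $p>1$ and $\gamma<1$; at $\theta=0$ (the regularization-dominated regime) one gets a matrix depending only on $w_1,w_2$ and $p$, and one checks it is positive definite in the stated range, the constraint $p\le 5$ emerging here. For range (ii), $1<p<\infty$, $-1<\gamma<\sqrt2-\tfrac12$, one must switch on the extra weights $w_3,w_4$: now $c_1=w_1+w_3\kappa$, $c_3=w_2+w_4\kappa$ also depend on $\theta$, which gives more freedom to keep $M(\theta)$ positive definite uniformly, at the cost of a genuinely two-variable ($\theta$ and the weights) optimization. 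One fixes $w_1,w_2$ as in case (i) and tunes $w_3,w_4$; the threshold $\gamma<\sqrt2-\tfrac12$ should drop out as the condition under which the worst-case $\theta$ still gives $\det M(\theta)>0$.

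The main obstacle is exactly the uniform-in-$\theta$ positive definiteness of $M(\theta)$ — equivalently, controlling the Sylvester minors $M(\theta)_{11}\ge c$ and $\det M(\theta)\ge c$ for all $\theta\in[0,1)$ with a constant $c>0$ independent of $\theta$ (hence of $\epsilon$ and of the point). This is the step the introduction flags as the source of the technical complications and of the artificial-looking range restrictions: the additional $w_3,w_4$ terms make the entries of $N$, and hence of $M$, rational functions of $\theta$ with $P_\theta=(p-2)\theta+1$ appearing, so one must carefully track signs across the whole interval, in particular near $\theta=1$ where $\kappa\to0$ and the error terms degenerate, and near $\theta=0$ where they dominate. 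Everything else — deriving \eqref{eq:S-shortly-written} (already done in Section \ref{sec:computations}), invoking Lemmas \ref{lem:ad-hoc} and \ref{lem:lemma-for-final-estimate}, and specializing $s=2-p$, $n=2$ — is bookkeeping. I would therefore isolate the positive-definiteness computation as a separate technical lemma (analogous to Lemma \ref{lem:Range-of-w2} in the appendix), state the explicit admissible choices of $w_1,w_2,w_3,w_4$ there, and in the proof of Proposition \ref{prop:Regularized-resultforwholep} simply cite it together with Lemmas \ref{lem:FundamentalInequality}, \ref{lem:ad-hoc} and \ref{lem:lemma-for-final-estimate}.
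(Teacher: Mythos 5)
Your overall strategy is exactly the paper's: start from the identity \eqref{eq:S-shortly-written} with $s=2-p$, use that Lemma \ref{lem:FundamentalInequality} is an equality for $n=2$, choose weights so that the leftover quadratic form in $(\Delta_T\ue,\ilN\ue)$ is uniformly positive definite in $\theta$, and then invoke Lemmas \ref{lem:ad-hoc} and \ref{lem:lemma-for-final-estimate}. However, the one place where you commit to specifics contains a step that fails. For case (i) with $w_3=w_4=0$ and $w_2=2$ one computes $\det M(\theta)=4P_\theta R_\theta-(P_\theta+R_\theta-w_1)^2$ with $R_\theta=1-\gamma\theta$, so positivity at a given $\theta$ is equivalent to $(\sqrt{P_\theta}-\sqrt{R_\theta})^2<w_1<(\sqrt{P_\theta}+\sqrt{R_\theta})^2$. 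At $\theta=0$ this forces $w_1<4$, while your proposed $w_1=p-\gamma$ (the value that diagonalizes $M$ at $\theta=1$) violates this whenever $p-\gamma\ge4$, e.g.\ $p=5$, $\gamma=\tfrac12$, which lies squarely in range (i). A single $w_1$ must work for \emph{all} $\theta$ simultaneously; the admissible window is $\big(\sup_\theta(\sqrt{P_\theta}-\sqrt{R_\theta})^2,\ \inf_\theta(\sqrt{P_\theta}+\sqrt{R_\theta})^2\big)=\big((\sqrt{p-1}-\sqrt{1-\gamma})^2,\ \min\{4,(\sqrt{p-1}+\sqrt{1-\gamma})^2\}\big)$, and the paper takes $w_1=(\sqrt{p-1}-\sqrt{1-\gamma})^2+\eta$; nonemptiness of this window, i.e.\ $(\sqrt{p-1}-\sqrt{1-\gamma})^2<4$, is precisely where $1<p\le5$ enters. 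Your endpoint-checking shortcut does happen to be legitimate here, but only because $\det M(\theta)$ is a \emph{concave} (not convex) quadratic in $\theta$, with leading coefficient $-(p-2+\gamma)^2$; this needs to be verified rather than asserted, and the sup/inf above are in fact computed in the paper by one-variable calculus including an interior critical point of $X_2(\theta)=(\sqrt{P_\theta}+\sqrt{R_\theta})^2$.

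Case (ii) is left essentially unexecuted in your plan, and "tuning $w_3,w_4$" by brute force is genuinely delicate because the Sylvester minors become quadratics in $\kappa$ whose sign pattern is hard to track. The paper's device, which you should adopt, is to choose the weights so that the off-diagonal entry $c_3P_\theta+(c_3+c_4)-(2c_1+c_2)$ vanishes \emph{identically} in $\theta$ (Lemma \ref{lem:vanishmixt}); the choice $w_1=p-\gamma$, $w_2=w_4=2$, $w_3=4-p+\gamma$ achieves this, after which positive definiteness reduces to the three scalar conditions $2c_1+c_2\ge c$, $c_3\ge c$ and $c_3+c_4\ge c$. The first two are immediate, and the last, $c_3+c_4=2(1-\gamma)-2\kappa+2(2+\gamma)\kappa^2$, has its minimum at $\kappa_1=\tfrac{1}{2(2+\gamma)}$ with value $2(1-\gamma)-\tfrac{1}{2(2+\gamma)}$, which is positive exactly when $\gamma<\sqrt2-\tfrac12$. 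This is where the threshold comes from; without the vanishing-mixed-term reduction it is not clear your two-variable optimization would terminate in a clean condition.
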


The proof of Proposition \ref{prop:Regularized-resultforwholep} is postponed to the end of the section. 
The main ingredients of the proof of Proposition \ref{prop:Regularized-resultforwholep} are the following lemmas, Lemma \ref{lem:simple} and Lemma \ref{lem:vanish}. The first lemma, Lemma \ref{lem:simple}, yields case (i). The second lemma, Lemma \ref{lem:vanish} yields case (ii).
In both lemmas we consider the same weighted sum as before now selecting $s=2-p$ i.e.\
   \begin{align}\label{eq:S-special}
    S&=w_1GD_1(p-2+s)+w_2GD_2(p-2+s)+\epsilon w_3GD_1(p-4+s)+\epsilon w_4GD_2(p-4+s)\nonumber\\
    &=w_1GD_1(0)+w_2GD_2(0)+w_3\epsilon GD_1(-2)+w_4\epsilon GD_2(-2),
\end{align} 
where $w_1,w_2,w_3,w_4\in\R$ are some weights, and the notation was defined in \eqref{eq:S}. 

The purpose of Lemma \ref{lem:simple} and Lemma \ref{lem:vanish} is to show that under restrictions (i) and (ii), respectively, we can find suitable weights $w_1,w_2,w_3$ and $w_4$, that only depend on $p$ and $\gamma$, such that $S$ has a suitable lower bound.

\begin{lemma} \label{lem:simple}
Let $n=2$, $S$ be as in \eqref{eq:S-special}, and (i) in Proposition \ref{prop:Regularized-resultforwholep} hold. For $\eta=\eta(p,\gamma)>0$ small enough, if 
\begin{equation} \label{eq:simple-weights}
\begin{aligned}
\begin{cases}
    w_1= p-\gamma -2\sqrt{(p-1)(1-\gamma)}  +\eta, \quad    &w_2=2, \\
    w_3=0, \quad &w_4=0,
\end{cases}
\end{aligned}
\end{equation}
then 
\begin{align} \label{eq:simple-lower-bound-for-S}
    S\geq c|D_T|D\ue||^2+Q
\end{align}
where $c=c(p,\gamma)>0$ and
$$ Q=\la \bar{x},M\bar{x}\ra $$
with $\bar{x}=(\Delta_T\ue,\ilN \ue)^T\in\R^2$ and a  uniformly bounded positive definite (with a uniform constant)  symmetric matrix $M=M(p,\gamma)\in \R^{2\times 2}$.
\end{lemma}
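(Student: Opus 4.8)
The plan is to specialize the key identity of Section~\ref{sec:computations} to the situation at hand — $n=2$, $s=2-p$, $w_3=w_4=0$ — and then to read off the matrix $M$ and check its uniform positive definiteness by Sylvester's criterion. First I would record that with these choices the coefficients in \eqref{eq:c1} collapse to $c_1=w_1$, $c_2=0$ (because $p-2+s=0$), $c_3=w_2=2$ and $c_4=-2\gamma\theta$ (because $p-2+s-\gamma=-\gamma$). Substituting these into \eqref{eq:S-shortly-written} and using that for $n=2$ the fundamental inequality (Lemma~\ref{lem:FundamentalInequality}) is an \emph{equality}, namely $|D^2\ue|^2=2|D_T|D\ue||^2+(\Delta_T\ue)^2+(\ilN\ue)^2$, one obtains
\[
 S=2w_1|D_T|D\ue||^2+\la\bar x,M\bar x\ra,\qquad \bar x=(\Delta_T\ue,\ilN\ue)^T,
\]
with
\[
 M=\begin{bmatrix} 2 & (p-2-\gamma)\theta+2-w_1\\ (p-2-\gamma)\theta+2-w_1 & 2(1-\gamma\theta)P_\theta\end{bmatrix},\qquad P_\theta=(p-2)\theta+1.
\]
Since $P_\theta>0$ and $1-\gamma\theta>0$ throughout the range in (i), all entries of $M$ are bounded by a constant depending only on $p$ and $\gamma$, so what remains is to verify $w_1>0$ and that $M$ is positive definite with a constant uniform in $\theta\in[0,1]$.

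Positivity of $w_1$ is immediate after rewriting $w_1=(\sqrt{p-1}-\sqrt{1-\gamma})^2+\eta\ge\eta>0$. For $M$, Sylvester's criterion asks for $M_{11}=2>0$ (clear) and $\det M>0$. The crucial observation is that
\[
 \det M=4(1-\gamma\theta)P_\theta-\big((p-2-\gamma)\theta+2-w_1\big)^2
\]
is a quadratic polynomial in $\theta$ whose leading coefficient computes to $-(p-2+\gamma)^2\le0$; hence $\theta\mapsto\det M$ is concave and its minimum over $[0,1]$ is attained at $\theta=0$ or $\theta=1$. This reduces the entire verification to two scalar inequalities in $p,\gamma,\eta$.

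At $\theta=1$ a short computation gives $\det M=\eta\big(4\sqrt{(p-1)(1-\gamma)}-\eta\big)$, positive once $\eta<4\sqrt{(p-1)(1-\gamma)}$ — indeed, $w_1$ is chosen to sit just above the smaller root of this expression. At $\theta=0$ one gets $\det M=4-(2-w_1)^2$, positive exactly when $0<w_1<4$; and here the hypotheses $1<p\le5$ and $-1<\gamma<1$ enter, since they give $\sqrt{p-1}\le2$ and $0<\sqrt{1-\gamma}<\sqrt2$, whence $(\sqrt{p-1}-\sqrt{1-\gamma})^2<4$ and so $w_1<4$ for $\eta$ small. Choosing $\eta=\eta(p,\gamma)>0$ below both thresholds makes both endpoint values of $\det M$ strictly positive, so by concavity $\det M\ge c>0$ uniformly in $\theta$; together with $M_{11}=2$ and the uniform bound on the entries this yields the desired uniformly bounded, uniformly positive definite $M$, and the lemma follows with $c=\min\{2w_1,\ldots\}>0$ and $Q=\la\bar x,M\bar x\ra$.

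The step I expect to be the crux is the computation that the $\theta^2$-coefficient of $\det M$ equals $-(p-2+\gamma)^2$ and hence $\le 0$: this is exactly what makes $\det M$ concave and collapses the problem to the two endpoints. Everything afterwards is an explicit one-line check, and it is this check that makes transparent why $p\le5$ is needed (to bound $\sqrt{p-1}$ at $\theta=0$) and why $\gamma<1$ is needed (to keep $1-\gamma\theta>0$ and $\sqrt{(p-1)(1-\gamma)}>0$ at $\theta=1$).
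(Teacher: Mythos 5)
Your proposal is correct, and its skeleton coincides with the paper's: the same collapse of the coefficients \eqref{eq:c1} to $c_1=w_1$, $c_2=0$, $c_3=2$, $c_4=-2\gamma\theta$, the same use of the planar \emph{equality} case of Lemma \ref{lem:FundamentalInequality} to absorb $|D^2\ue|^2$, and the same resulting matrix (the paper writes its entries as $P_\theta+R_\theta-w_1$ and $2P_\theta R_\theta$ with $R_\theta:=1-\gamma\theta$, which are exactly your $(p-2-\gamma)\theta+2-w_1$ and $2(1-\gamma\theta)P_\theta$). Where you genuinely diverge is the crux step of showing $\det M(\theta)\ge c$ uniformly in $\theta$. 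The paper factorizes the condition as $X_1(\theta)<w_1<X_2(\theta)$ with $X_{1,2}=(\sqrt{P_\theta}\mp\sqrt{R_\theta})^2$ and then computes $\sup_\theta X_1$ and $\inf_\theta X_2$ by differentiating in $\theta$ and inspecting critical points; you instead observe that $\det M$ is a quadratic in $\theta$ with leading coefficient $-(p-2+\gamma)^2\le 0$, hence concave, so the infimum over $\theta\in[0,1)$ is controlled by the two endpoint values, which you evaluate explicitly (your computations $\det M(1)=\eta\bigl(4\sqrt{(p-1)(1-\gamma)}-\eta\bigr)$ and $\det M(0)=4-(2-w_1)^2$ check out and reproduce the paper's thresholds). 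Your concavity argument is shorter and avoids the calculus of $X_1,X_2$; the paper's factorized form is slightly more informative, as it exhibits the full interval of admissible $w_1$ and makes visible the secondary range $5<p<7+4\sqrt{2}$, $-1<\gamma<-2-p+4\sqrt{p-1}$ that also works. Both arguments are complete and yield the lemma.
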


\begin{lemma} \label{lem:vanish}
Let $n=2$, $S$ be as in \eqref{eq:S-special},
and (ii) in Proposition \ref{prop:Regularized-resultforwholep} hold. If 
\begin{equation} \label{eq:vanish-weights}
\begin{aligned}
\begin{cases}
    w_1=p-\gamma,\quad 
    &w_2=2 \\
    w_3=4-p+\gamma,\quad &w_4=2,
\end{cases}
\end{aligned}
\end{equation}
then a statement similar to that in Lemma \ref{lem:simple} holds.
\end{lemma}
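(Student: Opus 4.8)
The plan is to follow exactly the template set up by Lemma~\ref{lem:simple} and its proof, but with the weights $(w_1,w_2,w_3,w_4)=(p-\gamma,2,4-p+\gamma,2)$ dictated by \eqref{eq:vanish-weights}, and to exploit the extra $\epsilon$-terms (the good structures $GD_1(-2)$ and $GD_2(-2)$ with nonzero weights $w_3,w_4$) to improve the admissible range of $\gamma$ from $(-1,1)$ to $(-1,\sqrt2-\tfrac12)$. First I would specialize the key identity \eqref{eq:S-shortly-written}--\eqref{eq:replace-time-derivatives} to $n=2$ and $s=2-p$, where the fundamental inequality is an \emph{equality} (Lemma~\ref{lem:FundamentalInequality}), so that $|D^2\ue|^2 = 2|D_T|D\ue||^2 + (\Delta_T\ue)^2 + (\ilN\ue)^2$. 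Substituting the chosen weights into the formulas \eqref{eq:c1} for $c_1,c_2,c_3,c_4$ (recalling $\kappa=1-\theta$, $P_\theta=(p-2)\theta+1$), I would compute $c_1,c_2,c_3,c_4$ as explicit affine functions of $\theta$, and then assemble $R$ (the quadratic form in $\bar x=(\Delta_T\ue,\ilN\ue)^T$) from its entries $N_{11}=c_2-c_1$, $N_{12}=\tfrac12(c_3P_\theta+(c_3+c_4)-(2c_1+c_2))$, $N_{22}=(c_3+c_4)P_\theta-c_1$. Because the fundamental inequality is an equality here, there are no wasted Hessian terms, and the whole left side of \eqref{eq:S-shortly-written} becomes $(|D\ue|^2+\epsilon)^{\frac{p-2+s}{2}}\{c_1|D^2\ue|^2 + c_2|D_T|D\ue||^2 + R\}$; absorbing $2|D_T|D\ue||^2$ from the Hessian identity back in, one gets a clean expression of the form $c|D_T|D\ue||^2 + \langle\bar x, M\bar x\rangle \le S$ for a matrix $M$ depending only on $p,\gamma,\theta$.

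The core of the argument is then a uniform-in-$\theta$ positivity check: I must show that with these weights, $c_1(\theta)>0$, the coefficient $c(\theta)$ of $|D_T|D\ue||^2$ is bounded below by a positive constant, and $M(\theta)$ is positive definite with a uniform lower bound, \emph{for all} $\theta\in[0,1)$, precisely when $1<p<\infty$ and $-1<\gamma<\sqrt2-\tfrac12$. By Sylvester's criterion this reduces to $M_{11}(\theta)\ge c>0$ and $\det M(\theta)\ge c>0$ uniformly in $\theta$. The natural way to organize this is to check the two endpoints $\theta=0$ (where the $\epsilon$-terms dominate and $P_\theta=1$) and $\theta=1$ (the formal case, where $c_3=w_2=2$, $c_4$ simplifies, and one should recover a variant of the $n=2$, $s=2-p$ computation from Lemma~\ref{lem:smooth-key-lemma} with $w_1=p-\gamma$), and then argue that the minors, being low-degree polynomials (quadratic or cubic) in $\theta$ with coefficients explicit in $p,\gamma$, stay positive in between — either by a direct sign analysis of the discriminant/critical points, or by writing them as convex combinations / sums of manifestly nonnegative pieces. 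The bound $\gamma<\sqrt2-\tfrac12$ should emerge as exactly the threshold beyond which $\det M(\theta)$ develops a zero for some $\theta\in(0,1)$; identifying where the worst $\theta$ sits (likely an interior critical point, not an endpoint, which is why the range shrinks compared to the formal case) is the crux.

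Once uniform positive definiteness of $M$ and positivity of $c$ and $c_1$ are established, the rest is mechanical and identical to the proof of Proposition~\ref{prop:smooth}: Lemma~\ref{lem:ad-hoc} upgrades the lower bound \eqref{eq:lower-bound-S} to a pointwise bound $\lambda(|D\ue|^2+\epsilon)^{\frac{p-2+s}{2}}|D^2\ue|^2 \le S$ with $\lambda=\lambda(p,\gamma)>0$ (here $c_1>0$ is what makes that lemma applicable, and $\|N\|_{L^\infty}$ is bounded since the weights depend only on $p,\gamma$ and $\theta\in[0,1]$), and then Lemma~\ref{lem:lemma-for-final-estimate} with $s=2-p$, $\gamma-p+2 = 4-p-\gamma$ (which equals $s$ only in the irrelevant edge case, so for generic $p$ the logarithmic error terms are present precisely because of the $GD_2(-2)$ term with $w_4\neq0$, matching the statement of Proposition~\ref{prop:Regularized-resultforwholep}) converts the pointwise bound into the claimed integral estimate. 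So concretely: \textbf{Step 1} — plug \eqref{eq:vanish-weights} into \eqref{eq:c1}, get $c_i=c_i(\theta)$; \textbf{Step 2} — use $n=2$, $s=2-p$ and the equality form of Lemma~\ref{lem:FundamentalInequality} to reduce \eqref{eq:S-shortly-written} to $c|D_T|D\ue||^2+\langle\bar x,M\bar x\rangle\le S$; \textbf{Step 3} — verify $c_1(\theta)>0$, $c(\theta)\ge c_0>0$, $M_{11}(\theta)\ge c_0$, $\det M(\theta)\ge c_0$ for all $\theta\in[0,1)$ under (ii), the one genuinely delicate computation; \textbf{Step 4} — invoke Lemmas~\ref{lem:ad-hoc} and \ref{lem:lemma-for-final-estimate} verbatim.

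The main obstacle is unquestionably Step~3: the determinant of $M$ is a cubic (or low-degree) polynomial in $\theta$ whose coefficients are rational expressions in $p$ and $\gamma$, and showing it is uniformly positive on $[0,1)$ exactly on the region $\{-1<\gamma<\sqrt2-\tfrac12\}$ requires either a careful minimization over $\theta$ or a clever algebraic rewriting that exhibits the sum of squares structure; the appearance of $\sqrt2$ strongly suggests that the binding constraint comes from a quadratic in $\gamma$ evaluated at an interior optimal $\theta$, so I would first locate that $\theta$ by differentiating $\det M(\theta)$ and then reduce the whole question to a single inequality of the form (quadratic in $\gamma$) $\ge 0$. If that computation becomes unwieldy, an alternative is to follow the paper's stated philosophy literally — interpret $R$ as a quadratic form, not try to kill the mixed $\Delta_T\ue\,\ilN\ue$ term exactly (as was possible in the smooth $n=2$ case), but instead bound it via Young's inequality after first absorbing part of the Hessian, accepting a slightly worse but still $p$-independent constant.
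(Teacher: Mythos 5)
Your outline follows the same architecture as the paper's proof (specialize \eqref{eq:c1} to the given weights, use the $n=2$ equality case of Lemma \ref{lem:FundamentalInequality}, verify Sylvester's conditions uniformly in $\theta$, then invoke Lemmas \ref{lem:ad-hoc} and \ref{lem:lemma-for-final-estimate}), and it would succeed if carried out. But you leave the crux, Step 3, as a plan with two candidate strategies, and you miss the single observation that makes the paper's Step 3 essentially trivial: the weights \eqref{eq:vanish-weights} are chosen \emph{precisely} so that the off-diagonal entry $\frac12\bigl(c_3P_\theta+(c_3+c_4)-(2c_1+c_2)\bigr)$ of $M$ vanishes identically in $\kappa$. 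The paper isolates this as Lemma \ref{lem:vanishmixt}: writing that coefficient as a quadratic polynomial in $\kappa$ and setting all three coefficients to zero yields $2w_1=(p-\gamma)w_2$, $2w_3=(4-p+\gamma)w_4$, $(p-2-\gamma)(w_4-w_2)=0$, which \eqref{eq:vanish-weights} satisfies. Consequently $\det M=c_3(c_3+c_4)P_\theta$ factors, and the whole verification collapses to three scalar quadratics in $\kappa$: $2c_1+c_2=2(p-\gamma)+2(4-p+\gamma)\kappa^2$ (monotone, check endpoints), $c_3=2+2\kappa\ge 2$, and $c_3+c_4=2(1-\gamma)-2\kappa+2(2+\gamma)\kappa^2$, whose minimum over $\kappa\in(0,1]$ is attained at the interior point $\kappa_1=\frac{1}{2(2+\gamma)}$ and equals $2(1-\gamma)-\frac{1}{2(2+\gamma)}$; positivity of this is exactly $-1<\gamma<\sqrt2-\tfrac12$. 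So your instinct that the threshold comes from a quadratic in $\gamma$ at an interior critical point is right, but the binding quantity is the diagonal entry $c_3+c_4$ (as a function of $\kappa$, not $\theta$), not a zero of the full quartic determinant. Your alternative of keeping the mixed term and using Young's inequality would forfeit the factorization and would not obviously reproduce the sharp constant $\sqrt2-\tfrac12$; if you pursue the direct route, compute $N_{12}$ first and you will find it is zero, which is the intended shortcut.

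Two minor slips: $\gamma-p+2\neq 4-p-\gamma$ in general (with $s=2-p$ the logarithmic case $s=\gamma-p+2$ occurs only at $\gamma=0$, which is why the log terms are carried along in the statement of Proposition \ref{prop:Regularized-resultforwholep} but are harmless otherwise); and the excluded value $s=\gamma-p$ needed for Lemma \ref{lem:lemma-for-final-estimate} corresponds to $\gamma=2$, which lies outside range (ii), so that hypothesis is automatically met.
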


To begin with, recall from 
\eqref{eq:replace-time-derivatives} that $S$ can be written as
\begin{align}\label{eq:regularid-decomposition}
    S
    &=
    c_1|D^2\ue|^2+c_2|D_T|D\ue||^2+(c_3-c_1)(\Delta_T \ue)^2+\big((c_3+c_4)P_\theta-c_1\big) (\ilN \ue)^2\nonumber\\
    &\quad
    +\big(c_3P_\theta+(c_3+c_4)-(2c_1+c_2)\big)\Delta_T \ue \ilN \ue.
\end{align}	
where
\begin{align}\label{cdt:ci}
\begin{cases}
    c_1=w_1+w_3\kappa, \quad
    &c_2=-2w_3\theta\kappa, \\
    c_3=w_2+w_4\kappa,\quad 
    &c_4=-(w_2\gamma+w_4\kappa(2+\gamma))\theta,
\end{cases}
\end{align}
and
$$P_\theta=(p-2)\theta+1\in(0,\infty),
\quad 
\theta=\frac{|Du|^2}{|Du|^2+\epsilon}\in[0, 1),
\quad 
\kappa=1-\theta=\frac{\epsilon}{|D\ue|^2+\epsilon}\in(0,1].
$$
Fundamental equality \eqref{eq:tri-ineq} in the plane yields that
\begin{equation}\label{eq:regularid-fun-ineq}
\begin{aligned}
S=&c_1\big(2|D_T|D\ue| |^2
    + (\Delta_T \ue)^2  
    +(\ilN \ue )^2\big)+c_2|D_T|D\ue||^2+(c_3-c_1)(\Delta_T \ue)^2\\
    &+\big((c_3+c_4)P_\theta-c_1\big) (\ilN \ue)^2
    +\big(c_3P_\theta+(c_3+c_4)-(2c_1+c_2)\big)\Delta_T \ue \ilN \ue\\
=&(2c_1 +c_2)|D_T|D\ue||^2+Q.
\end{aligned} 
\end{equation}
where
\begin{equation}\label{eq:Q-full}
 \begin{aligned}
Q=c_3  (\Delta_T \ue )^2+ (c_3+c_4)P_\theta (\ilN \ue)^2 +\big(c_3P_\theta+(c_3+c_4)-(2c_1+c_2)\big) \Delta_T \ue  \ilN \ue,
\end{aligned}   
\end{equation}
is a quadratic form in $ \Delta_T \ue $ and $ \ilN \ue $.
We write $Q$ compactly as
$$ Q=\la\bar{x},M\bar{x}\ra, $$
where $\bar{x}=(\Delta_T \ue,\ilN \ue)^T\in \R^2$ is a vector and
$$ M:=
\begin{bmatrix}
c_3 & \displaystyle{\frac{1}{2}\big(c_3P_\theta+(c_3+c_4)-(2c_1+c_2)\big)} \\
\displaystyle{\frac{1}{2}\big(c_3P_\theta+(c_3+c_4)-(2c_1+c_2)\big)} & (c_3+c_4)P_\theta
\end{bmatrix}
$$
is a symmetric $2\times2$-matrix. 

To prove Lemma \ref{lem:simple} and \ref{lem:vanish}, it now suffices to check that \eqref{eq:regularid-fun-ineq} satisfies all the requirements of the lemmas:
The coefficient of $|D_T|D\ue||^2$ in \eqref{eq:regularid-fun-ineq} needs to be bounded from below by a positive constant, that is,
\begin{align}\label{cdt:2c1+c2}
2c_1+c_2=2(w_1+w_3\kappa)-2w_3\theta\kappa\geq c
\end{align}
uniformly in $\Omega_T$.
For the quadratic form $Q$, we need to analyse the uniform boundedness and uniform positive definiteness of matrix $M$.
Uniform boundedness is quite straightforward, so we focus our attention on the uniform positive definiteness. By Sylvester's condition, it suffices to check that
\begin{align}\label{cdt:c3}
c_3=w_2+w_4\kappa\geq c,
\end{align}
and
\begin{align}\label{cdt:detq}
\det(M)=c_3(c_3+c_4)P_\theta-\frac{\big(c_3P_\theta+(c_3+c_4)-(2c_1+c_2)\big)^2}{4}\geq c
\end{align}
uniformly in $\Omega_T$.
Next we prove Lemma \ref{lem:simple}, which implies nonnegativity of the necessary terms when $1<p\leq5$ and $-1<\gamma<1$. In this case a simple choice  of the weights $w_3=w_4=0$ will work. 
\begin{proof}[Proof of Lemma \ref{lem:simple}]
Similarly to the smooth case, we start with $w_3=w_4=0$, plug these values into \eqref{cdt:ci}, and obtain
\begin{align*}
\begin{cases}
    c_1=w_1,\quad  &c_2=0,\\
    c_3=w_2, \quad 
    &c_4=-w_2\gamma\theta.
\end{cases}
\end{align*}
This together with \eqref{eq:regularid-fun-ineq} gives
\begin{equation}
    \begin{aligned} \label{eq:simple-regularid-fun-ineq}
S=2w_1|D_T|D\ue||^2+ w_2(\Delta_T \ue )^2+w_2 P_\theta R_\theta(\ilN \ue)^2+\big(w_2(P_\theta+R_\theta)-2w_1\big)\Delta_T \ue \ilN \ue,
\end{aligned}
\end{equation}
where we denote $R_\theta:=1-\gamma\theta\in(0,2),$
for the sake of brevity. To simplify the above identity, we select $w_2=2.$ Thus
 \begin{equation}
    \begin{aligned}  
S=&2w_1|D_T|D\ue||^2+ 2\Big((\Delta_T \ue )^2+  P_\theta R_\theta(\ilN \ue)^2+( P_\theta+R_\theta -w_1)\Delta_T \ue \ilN \ue\Big)\\
=& 2w_1|D_T|D\ue||^2+Q,
\end{aligned}
\end{equation}
where the matrix of the quadratic form $Q$ is
$$ M(\theta):=
\begin{bmatrix}
2 &  P_\theta+R_\theta-w_1 \\
P_\theta+R_\theta-w_1 & 2P_\theta R_\theta
\end{bmatrix}.
$$
The determinant of $M(\theta)$ is uniformly positive if and only if
$$P_\theta R_\theta-\frac{(P_\theta+R_\theta -w_1)^2}{4}\geq c>0, $$
that is,
 $$ X_2(\theta):= (\sqrt{P_\theta} +\sqrt{R_\theta})^2 >w_1>  (\sqrt{P_\theta} -\sqrt{R_\theta})^2:=X_1(\theta)$$
 uniformly in $\Omega_T$.
Thus it suffices to verify
$$\inf_\theta X_2(\theta)>\sup_\theta X_1(\theta).$$
Computing the derivative of $X_1$ with respect to $\theta$, one has
\begin{align*}
   X_1^\prime(\theta)
   =&p-2-\gamma-\frac{(p-2) R_\theta-\gamma P_\theta }{\sqrt{P_\theta  R_\theta}}\\
   =&\frac{(\sqrt{P_\theta}-\sqrt{R_\theta})\big((p-2)\sqrt{R_\theta}+\gamma\sqrt{P_\theta}\big)}{ \sqrt{P_\theta R_\theta }}.
\end{align*} 
Then in the eligible range of parameters 
$X_1^\prime(\theta)=0$ if and only if
$p-2+\gamma=0.$ 
Hence, by considering the values at the endpoints, we obtain the supremum of $X_1$ with respect to $\theta$:
\begin{align*}
\sup_\theta X_1 (\theta)&
=\max\{X_1 (0), X_1 (1)\}\\
&=\max\Big\{0, \big(\sqrt{p-1}-\sqrt{1-\gamma}\big)^2  \Big\}\\
&= \big(\sqrt{p-1}-\sqrt{1-\gamma}\big)^2.
\end{align*}

Similarly, we obtain the derivative of $X_2$ with respect of $\theta$:
$$X_2^\prime(\theta)=\frac{(\sqrt{P_\theta}+\sqrt{R_\theta})\big((p-2)\sqrt{R_\theta}-\gamma\sqrt{P_\theta}\big)}{ \sqrt{P_\theta R_\theta }},$$
and thus the eligible stationary point is
$$\theta_2=\frac{p-2-\gamma}{(p-2)\gamma}$$
if $(p-2)\gamma>0$.
Then
\begin{align*}
\inf_\theta X_2(\theta)&
=\min\Big\{X_2(0),X_2\Big(\frac{p-2-\gamma}{(p-2)\gamma}\Big), X_2(1)\Big\}\\
&=\min\Big\{4,  \frac{p-2}{\gamma}+\frac{\gamma}{p-2}+2,\big(\sqrt{p-1}+\sqrt{1-\gamma}\big)^2 \Big\}\\
&=\min\Big\{4,  \big(\sqrt{p-1}+\sqrt{1-\gamma}\big)^2 \Big\}.
\end{align*}
Obviously, we have $$\big(\sqrt{p-1}+\sqrt{1-\gamma}\big)^2>\big(\sqrt{p-1}-\sqrt{1-\gamma}\big)^2.$$
Note that
$$4>\big(\sqrt{p-1}-\sqrt{1-\gamma}\big)^2$$
is equivalent to                               
$$1<p\leq 5 \quad \text{and}\quad -1<\gamma<1,$$
or $5<p<7+4\sqrt{2}$ and $-1<\gamma<-2-p+4\sqrt{p-1}$.  
Thus if $p$ and $\gamma$ satisfy range (i), for small enough $\eta=\eta(p,\gamma)>0$, 
in addition to the above choice $w_2=2$, we set
\begin{align*}
     w_1=\big(\sqrt{p-1}-\sqrt{1-\gamma}\big)^2+\eta.
\end{align*}
The proof is finished.
\end{proof}
Next  we prove Lemma \ref{lem:vanish}, which implies nonnegativity of the necessary terms when  $1<p<\infty$ and $-1<\gamma<\sqrt{2}-\frac12$. In this case, we use a choice  of the weights which leads to the vanishing coefficient of the mixed term $\Delta_T \ue \ilN \ue $ in \eqref{eq:Q-full}.

To be more precise, at the beginning of this section, we obtained three conditions \eqref{cdt:2c1+c2}, \eqref{cdt:c3} and \eqref{cdt:detq}, i.e.\ that 
\begin{align}\label{cdt:vanish-all}
\begin{cases}
    2c_1+c_2&=2(w_1+w_3\kappa)-2w_3\theta\kappa\geq c,\\
    c_3&=w_2+w_4\kappa\geq c,\\
    \det(M)&=c_3(c_3+c_4)P_\theta-\frac{\left(c_3P_\theta+(c_3+c_4)-(2c_1+c_2)\right)^2}{4}\geq c
\end{cases}
\end{align}
need to hold uniformly in $\Omega_T$. Here 
$$ M=
\begin{bmatrix}
c_3 & \displaystyle{\frac{1}{2}\big(c_3P_\theta+(c_3+c_4)-(2c_1+c_2)\big)} \\
\displaystyle{\frac{1}{2}\big(c_3P_\theta+(c_3+c_4)-(2c_1+c_2)\big)} & (c_3+c_4)P_\theta
\end{bmatrix}
$$
is the coefficient matrix 
of quadratic form
\begin{align*}
Q=c_3  (\Delta_T \ue )^2+ (c_3+c_4)P_\theta (\ilN \ue)^2 +\big(c_3P_\theta+(c_3+c_4)-(2c_1+c_2)\big) \Delta_T \ue  \ilN \ue.
\end{align*}
To simplify the computations in checking the last condition in (\ref{cdt:vanish-all}), we will consider a special case where the coefficient $c_3P_\theta+(c_3+c_4)-(2c_1+c_2)$ of the mixed term $\Delta_T \ue  \ilN \ue $ vanishes.

\begin{lemma}[Vanishing mixed term]\label{lem:vanishmixt}
The mixed term $ \Delta_T \ue  \ilN \ue $ in $Q$ vanishes, i.e.
$$ c_3P_\theta+(c_3+c_4)-(2c_1+c_2)=0 $$
uniformly in $\Omega_T$ if and only if 
$$
\begin{cases}
    2w_1=(p-\gamma)w_2, \\
    2w_3=(4-p+\gamma)w_4, \\
    (p-2-\gamma)(w_4-w_2)=0.
\end{cases}
$$
\end{lemma}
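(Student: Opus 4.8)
The plan is to compute the coefficient $c_3P_\theta+(c_3+c_4)-(2c_1+c_2)$ explicitly as a function of $\theta$ (equivalently of $\kappa=1-\theta$) using the formulas \eqref{cdt:ci}, and then impose that it vanish identically. Since $P_\theta=(p-2)\theta+1$ is affine in $\theta$ and each of $c_1,c_2,c_3,c_4$ is a polynomial in $\theta$ of degree at most two (recall $c_1=w_1+w_3\kappa$, $c_2=-2w_3\theta\kappa$, $c_3=w_2+w_4\kappa$, $c_4=-(w_2\gamma+w_4\kappa(2+\gamma))\theta$, and $\kappa=1-\theta$), the whole expression $c_3P_\theta+(c_3+c_4)-(2c_1+c_2)$ is a polynomial in $\theta$ of degree at most two. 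Requiring that it vanish for all $\theta\in[0,1)$ is then equivalent to requiring that all three of its coefficients (constant term, coefficient of $\theta$, coefficient of $\theta^2$) vanish, which yields a linear system in $w_1,w_2,w_3,w_4$ whose coefficients depend on $p$ and $\gamma$. The main task is to carry out this expansion carefully and read off the three resulting equations.

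First I would substitute $\kappa=1-\theta$ everywhere and collect terms. It is cleanest to evaluate at the two endpoints and one interior check: at $\theta=0$ we have $\kappa=1$, $P_\theta=1$, so $c_1=w_1+w_3$, $c_2=0$, $c_3=w_2+w_4$, $c_4=0$, and the coefficient becomes $(w_2+w_4)\cdot 1+(w_2+w_4)-2(w_1+w_3)=2(w_2+w_4-w_1-w_3)$; at $\theta=1$ we have $\kappa=0$, $P_\theta=p-1$, so $c_1=w_1$, $c_2=0$, $c_3=w_2$, $c_4=-w_2\gamma$, and the coefficient becomes $w_2(p-1)+(w_2-w_2\gamma)-2w_1=w_2(p-\gamma)-2w_1$. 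Vanishing at $\theta=1$ gives the first equation $2w_1=(p-\gamma)w_2$. Combined with vanishing at $\theta=0$, namely $w_1+w_3=w_2+w_4$, this gives $2w_3=2(w_2+w_4)-2w_1=2(w_2+w_4)-(p-\gamma)w_2=(2-p+\gamma)w_2+2w_4$; this is not yet in the stated form, so the third (quadratic-in-$\theta$) condition must be used to simplify it. To extract that, I would compute the coefficient of $\theta^2$ in the full polynomial: the $\theta^2$ contributions come from $c_3P_\theta$ (through $w_4\kappa\cdot P_\theta=w_4(1-\theta)((p-2)\theta+1)$, contributing $-w_4(p-2)\theta^2$), from $c_4=-(w_2\gamma+w_4(1-\theta)(2+\gamma))\theta$ (contributing $+w_4(2+\gamma)\theta^2$), and from $-c_2=2w_3\theta(1-\theta)$ (contributing $-2w_3\theta^2$); setting the total to zero yields $-w_4(p-2)+w_4(2+\gamma)-2w_3=0$, i.e.\ $2w_3=(4-p+\gamma)w_4$, which is the second stated equation. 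Back-substituting $2w_3=(4-p+\gamma)w_4$ into the relation $2w_3=(2-p+\gamma)w_2+2w_4$ derived above gives $(4-p+\gamma)w_4=(2-p+\gamma)w_2+2w_4$, hence $(2-p+\gamma)w_4=(2-p+\gamma)w_2$, i.e.\ $(p-2-\gamma)(w_4-w_2)=0$, the third stated equation. Conversely, if all three equations hold, reversing these steps shows the coefficient vanishes at $\theta=0$, at $\theta=1$, and its leading coefficient vanishes, so (being a degree-$\le 2$ polynomial) it is identically zero.

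The main obstacle is purely bookkeeping: one must track the quadratic-in-$\theta$ terms without sign or coefficient errors, and make sure the three conditions obtained are genuinely equivalent to the identical vanishing — i.e.\ that a degree-two polynomial vanishing on the interval $[0,1)$ forces all coefficients to vanish (true, since it has infinitely many roots), so it suffices to check the three extracted coefficient conditions. No deep idea is needed beyond this; the only subtlety is organizing the expansion so that the somewhat asymmetric-looking third condition $(p-2-\gamma)(w_4-w_2)=0$ emerges cleanly as a consequence of the other two together with the remaining linear relation, rather than being forced to expand everything at once.
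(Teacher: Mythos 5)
Your proposal is correct and matches the paper's argument in essence: the paper likewise treats $c_3P_\theta+(c_3+c_4)-(2c_1+c_2)$ as a quadratic polynomial (expanded directly in $\kappa=1-\theta$, whose three coefficients are exactly $(4-p+\gamma)w_4-2w_3$, $(p-2-\gamma)(w_4-w_2)$, and $(p-\gamma)w_2-2w_1$) and sets all coefficients to zero. Your variant of evaluating at $\theta=0$, $\theta=1$ and extracting the $\theta^2$-coefficient is an equivalent bookkeeping choice, and your algebra checks out.
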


\begin{proof}
Recall that $\theta=1-\kappa$, $\kappa>0$, and $P_\theta=(p-2)\theta+1$. Then recalling the expressions of $c_1,\cdots,c_4$ in \eqref{cdt:ci},  we can write the coefficient of the mixed term  $ \Delta_T \ue  \ilN \ue $ as a polynomial of $\kappa$ as 
\begin{align*}
&c_3P_\theta+(c_3+c_4)-(2c_1+c_2)\\
&\hspace{1 em}=\big((4-p+\gamma)w_4-2w_3\big)\kappa^2+(p-2-\gamma)(w_4-w_2)\kappa+(p-\gamma)w_2-2w_1.
\end{align*}
Set all the coefficients to be zero, we have the desired condition.
\end{proof}

By the above Lemma, we can easily to obtain the following result.

\begin{corollary}\label{cor:wi-vanish}
If
\begin{align}\label{cdt:wi-vanish}
\begin{cases}
    w_1 =p-\gamma,\quad 
    &w_2=2, \\
    w_3 =4-p+\gamma,\quad  
    &w_4=2,
\end{cases}
\end{align}
then the mixed term $ \Delta_T \ue  \ilN \ue $ in $Q$ vanishes.
\end{corollary}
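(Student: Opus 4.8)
The plan is to invoke Lemma \ref{lem:vanishmixt} directly: that lemma already gives a complete characterization of when the coefficient $c_3P_\theta+(c_3+c_4)-(2c_1+c_2)$ of the mixed term $\Delta_T\ue\,\ilN\ue$ vanishes uniformly in $\Omega_T$, namely the three algebraic conditions $2w_1=(p-\gamma)w_2$, $2w_3=(4-p+\gamma)w_4$, and $(p-2-\gamma)(w_4-w_2)=0$. So the entire proof reduces to substituting the proposed weights \eqref{cdt:wi-vanish} into these three conditions and checking each.

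First I would verify $2w_1=(p-\gamma)w_2$: with $w_1=p-\gamma$ and $w_2=2$ both sides equal $2(p-\gamma)$. Next, $2w_3=(4-p+\gamma)w_4$: with $w_3=4-p+\gamma$ and $w_4=2$ both sides equal $2(4-p+\gamma)$. Finally, $(p-2-\gamma)(w_4-w_2)=0$ holds because $w_4=w_2=2$, so the second factor is zero irrespective of $p$ and $\gamma$. Since all three conditions of Lemma \ref{lem:vanishmixt} are met, the mixed term in $Q$ vanishes, which is the claim.

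There is essentially no obstacle here; the corollary is purely bookkeeping, recording one convenient explicit choice of weights that annihilates the mixed term so that the positive definiteness analysis of the matrix $M$ in the proof of Lemma \ref{lem:vanish} only has to deal with the two diagonal entries. I would also remark that this choice is not unique: any weights with $w_2=w_4$, $w_1=\tfrac{p-\gamma}{2}w_2$ and $w_3=\tfrac{4-p+\gamma}{2}w_4$ work, and the stated values are simply the normalization $w_2=2$, consistent with the weight already used in Lemma \ref{lem:simple} and in the smooth case of Section \ref{sec:smcase}.
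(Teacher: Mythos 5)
Your proof is correct and follows exactly the paper's route: the corollary is obtained by substituting the given weights into the three conditions of Lemma \ref{lem:vanishmixt} and checking each, which is precisely what the paper does (it simply states that the result follows ``by the above Lemma''). Your additional remark on non-uniqueness is accurate but not needed.
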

The above corollary gives a choice of the coefficients $w_1,w_2,w_3$ and $w_4$ to obtain the vanishing coefficient of mixed term $ \Delta_T \ue  \ilN \ue $. This then helps us in proving Lemma \ref{lem:vanish}.

\begin{proof}[Proof of Lemma \ref{lem:vanish}]
If $w_1,w_2,w_3$ and $w_4$ satisfy \eqref{cdt:wi-vanish}, then by Corollary \ref{cor:wi-vanish}, the last condition in  \eqref{cdt:vanish-all} reduces to checking that
$$\det(M)=c_3(c_3+c_4)P_\theta \geq c.$$
Since $$P_\theta=(p-2)\theta+1\geq \min\{p-1,1\}>0,$$ 
sufficient conditions to obtain \eqref{cdt:vanish-all} can be written as
\begin{align}\label{cdt:vanish}
\begin{cases}
    2c_1+c_2&=2(w_1+w_3\kappa)-2w_3\theta\kappa\geq c,\\
    c_3&=w_2+w_4\kappa\geq c,\\
    c_3+c_4&=w_2+w_4\kappa-\big(w_2\gamma+w_4\kappa(2+\gamma)\big)\theta\geq c
\end{cases}
\end{align}
uniformly in $\Om_T.$

First, using  values \eqref{cdt:wi-vanish} in the first condition and replacing $\theta$ by $1-\kappa$, we have
\begin{align*}
2c_1+c_2
=&2(p-\gamma)+2(4-p+\gamma)\kappa^2.
\end{align*}
Since $\kappa$ is positive, the sign of the derivative with respect to $\kappa$ that is  $4(4-p+\gamma)\kappa$  is fixed. Then $2c_1+c_2$ with respect to $\kappa$ is monotone and the minimum point corresponds either $\kappa=0$ or $\kappa=1$.
Thus
\begin{equation} \label{eq:bound2c1+c2}
\begin{aligned}
2c_1+c_2\geq\min\{2(p-\gamma), 8\}>0.
\end{aligned}
\end{equation}
For the second condition, when $w_2=w_4=2$, it is obvious that
\begin{align}\label{eq:vnsc_3dt}
c_3=2+2\kappa\geq2>0.
\end{align}
Finally, for the last condition plugging values \eqref{cdt:wi-vanish} in and rewriting as 
\begin{align*}
c_3+c_4
=&2(1-\gamma)-2\kappa+2(2+\gamma)\kappa^2.
\end{align*}
When the derivative of $c_3+c_4$ with respect to $\kappa$ vanishes, that is, 
$-2+4(2+\gamma)\kappa=0,$
one has
$$\kappa_1=\frac{1}{2(2+\gamma)}\in (0,1].$$
Then the minimum point is one of the boundary points or the extreme point $\kappa_1 $. 
Selecting $\kappa=\kappa_1$, we have
\begin{align}
    \label{eq:c3+c4}
c_3+c_4=2(1-\gamma)-\frac{1}{2(2+\gamma)}>0
\end{align}
if and only if
$$-1<\gamma<\sqrt{2}-\frac12.$$
If $\kappa=0$, we have $c_3+c_4=2(1-\gamma)$, and if $\kappa=1$, then $c_3+c_4=4$. It follows that the minimum is given by strictly positive expression \eqref{eq:c3+c4}, and the proof is finished.
\end{proof}

The proof of Proposition \ref{prop:Regularized-resultforwholep} now immediately follows.

\begin{proof}[Proof of Proposition \ref{prop:Regularized-resultforwholep}]
The result immediately follows from the previous lemmas, since under assumption (i),  Lemma \ref{lem:simple} implies that \eqref{eq:lower-bound-S} holds and thus
Lemma \ref{lem:ad-hoc} is applicable. 
Similarly under assumption (ii), Lemma \ref{lem:vanish} implies that
Lemma \ref{lem:ad-hoc} is applicable.
Now the desired estimate follows from Lemma \ref{lem:lemma-for-final-estimate}.
\end{proof}
\subsection{Passing to the original equation}\label{sec:ProofofMain}
In this section we justify the limiting argument to let $\epsilon\to0$ in Proposition \ref{prop:Regularized-resultforwholep} and thus derive our main result, Theorem \ref{resultforwholep}.

\begin{proof}[Proof of Theorem \ref{resultforwholep}]
Let $u\colon\Omega_T\rightarrow\R$ be a viscosity solution to 
\begin{align}\label{eq:GeParabEq-2}
    u_t-|Du|^{\gamma}\big(\Delta u+(p-2)\Delta_\infty^Nu\big)=0.
\end{align}
Let us fix concentric parabolic cylinders $Q_r\subset Q_{2r}\Subset\Om_T$
 with center point $(x_0,t_0)\in\Om_T$
and moreover, let us fix a smooth subdomain $U\Subset \Omega$ and $0<t_1<t_2<T$ such that $Q_{2r}\Subset U_{t_1,t_2}\Subset\Omega_T$.
For $\epsilon>0$ small, let us consider the Dirichlet problem 
\begin{equation}
\begin{cases}
\begin{aligned}
   \ue_t-(|D\ue|^2+\epsilon)^{\gamma/2}\Big(\Delta\ue+(p-2)\frac{\il\ue}{|D\ue|^2+\epsilon}\Big)=0
   \quad&\text{in } U_{t_1,t_2};\\
   \ue=u
   \quad&\text{on } \partial_pU_{t_1,t_2},
\end{aligned}
\end{cases}
\end{equation}
where
$$ \partial_pU_{t_1,t_2}:=(\overline U\times \{t_1\})\cup(
\partial U\times (t_1,t_2]) $$
is the parabolic boundary of $U_{t_1,t_2}$.
By the classical theory of uniformly parabolic equations, the above problem has a unique solution $\ue\in C^\infty( U_{t_1,t_2})\cap C( \overline{U} _{t_1,t_2})$.

Proposition \ref{prop:Regularized-resultforwholep} is applicable to $\ue$ and we conclude that
\begin{equation}\label{eq:RegularizedQuantitativeBoundforSecondDerivativesinBall}
    \begin{aligned}
    \int_{Q_r}|D^2\ue|^2dxdt
    &\leq
    \frac{C}{r^2}\Big(
    \int_{Q_{2r}}|D\ue|^2dxdt 
    +\int_{Q_{2r}}(|D\ue|^2+\epsilon)^{\frac{2-\gamma}{2}}dxdt\Big) \\
    &\quad
    +\epsilon\Big(\frac{C}{r^2}\int_{Q_{2r}}\big|\ln(|D\ue|^2+\epsilon)\big|dxdt \\
    &\quad
    +C\int_{B_{2r}}\big|\ln\big(|D\ue(x,t_0)|^2+\epsilon\big)\big|dx
    \Big)
    \end{aligned}
\end{equation}
where $C=C(p,\gamma)>0$.
By \cite{imbertjs19}, for any $Q_R\Subset U_{t_1,t_2}$ there exist positive constants $\alpha\in(0,1)$ and $C>0$, that are allowed to depend on $p$, $\gamma$, $\dist(Q_R,\partial U_{t_1,t_2})$ and $\|u\|_{L^\infty(U_{t_1,t_2})}$,
such that
\begin{equation} \label{eq:Boundedness-of-Dues-in-Holderspace}
    \|D\ue\|_{C^{\alpha}(Q_R)}\leq C. 
\end{equation}
Arzel\`a-Ascoli theorem gives that $\ue$ and $D\ue$ both converge locally uniformly, up to a subsequence, and 
$$ \ue\xrightarrow{\epsilon\to0}\bar{u}
\quad\text{and}\quad
D\ue\xrightarrow{\epsilon\to0}D\bar{u} $$
for some continuous function $\bar{u}\colon U_{t_1,t_2}\to\R$, which by a barrier argument is continuous up to the parabolic boundary, and 
whose spatial gradient $D\bar{u}$ is locally continuous.

By the well known \cite{giga06} stability properties of viscosity solutions,  $\bar{u}$ is a viscosity solution to  
\begin{equation}
\begin{cases}
\begin{aligned}
   \bar{u}_t-|D\bar{u}|^\gamma\big(\Delta\bar{u}+(p-2)\ilN\bar{u}\big)=0
   \quad&\text{in } U_{t_1,t_2};\\
   \bar{u}=u
   \quad&\text{on } \partial_pU_{t_1,t_2}.
\end{aligned}
\end{cases}
\end{equation}
By the uniqueness theorem for viscosity solutions \cite{ohnumas97}, we conclude that $\bar{u}=u$.

By employing bound \eqref{eq:Boundedness-of-Dues-in-Holderspace}, we find that the right hand side of \eqref{eq:RegularizedQuantitativeBoundforSecondDerivativesinBall} is bounded from above by a constant independent of $\epsilon$.
Thus $\{D^2\ue\}_{\epsilon}$ is bounded in $L^2(Q_{r})$, and consequently we may extract a subsequence that converges weakly in $L^2(Q_r)$. Further, using integration by parts, we see that the limit is $D^2u$, and thus $D^2u\in L^2_\loc(\Om_T)$. Finally, we conclude that
\begin{align*}
    \int_{Q_{r}}|D^2u|^2dxdt
    &\leq \liminf_{\epsilon\to 0}\int_{Q_{r}}|D^2\ue|^2dxdt \\
    &\leq \liminf_{\epsilon\to 0} \Bigg(\frac{C}{r^2}\Big(\int_{Q_{2r}} |D\ue|^2 dxdt 
    +\int_{Q_{2r}}(|D\ue|^2+\epsilon)^{\frac{2-\gamma}{2}}dxdt\Big) \\
    &\quad
    +\epsilon\Big(\frac{C}{r^2}\int_{Q_{2r}}\big|\ln(|D\ue|^2+\epsilon)\big|dxdt
    +C\int_{B_{2r}}\big|\ln\big(|D\ue(x,t_0)|^2+\epsilon\big)\big|dx
    \Big)\Bigg) \\
    &= \frac{C}{r^2}\Big(\int_{Q_{2r}}|Du|^2 dxdt 
    +\int_{Q_{2r}}|Du|^{2-\gamma} dxdt\Big),
\end{align*}
which is the desired estimate.
\end{proof}

It is possible to improve the ranges in Theorem \ref{resultforwholep}. However, the computations get more technical, even if they follow the same ideas as above, and thus we have chosen to omit them. In any case the question whether the full range obtained in the smooth case in Proposition \ref{prop:smooth} can also be obtained here remains an open problem. 

Next we give the proof of Corollary \ref{cor:TimeDerivative-w22}.

\begin{proof}[Proof of Corollary \ref{cor:TimeDerivative-w22}]
Assume that $\ue$ is a smooth solution to \eqref{eq:PDEwithGeneralDegeneracy}, and observe
\begin{align*}
\abs{\ue_t}=& \abs{ (|D\ue|^2+\epsilon)^{\gamma/2}\Big(\Delta\ue+(p-2)\frac{\il\ue}{|D\ue|^2+\epsilon}\Big)}\nonumber\\
\leq& (|D\ue|^2+\epsilon)^{\gamma/2}(\abs{\Delta\ue}+\abs{p-2}\abs{D^2\ue}   )\nonumber\\
\leq & (p+2)(|D\ue|^2+\epsilon)^{\gamma/2}\abs{D^2\ue}.
\end{align*}
As above, the spatial gradient is H\"{o}lder continuous
and since $\gamma$ is nonnegative, we have in $Q_{2r}$
$$(|D\ue|^2+\epsilon)^{\gamma/2}\leq C.$$
For all $ Q_r\subset Q_{2r}\Subset \Om _T$,
we have
\begin{align*}
&\int_{Q_r}  |\ue_t|^2  dxdt \nonumber \\
     \leq&(p+2)^2\int_{Q_r} (|D\ue|^2+\epsilon)^{\gamma} |D^2\ue|^2  dxdt \nonumber \\
     \leq& (p+2)^2 \norm{(|D\ue|^2+\epsilon)^{\gamma}}_{L^\infty(Q_r)} \int_{Q_r}    |D^2\ue|^2  dxdt. r
\end{align*}

Then we use (\ref{eq:RegularizedQuantitativeBoundforSecondDerivativesinBall}) estimate the right hand side of the above estimate.
Similarly to the proof of Theorem \ref{resultforwholep}, up to a subsequence, $\{ \ue_t  \}_{\ez}$ converges weakly in $L^2(Q_r)$.
By integration by parts, the weak limit is $u_t$. In particular $u_t$ exists as a function and $u_t\in L^2_{\loc}(\Om_T)$.
\end{proof}
\appendix
\section{Proof of Lemma \ref{lem:lemma-for-final-estimate}}
Next we prove Lemma \ref{lem:lemma-for-final-estimate}. 
For convenience of the reader, we recall its statement here: 
Let $S$ be as in (\ref{eq:S}),  and $u\colon\Omega_T\to\R$ be a smooth solution to \eqref{eq:RoadmapRegularized}.
(If $\epsilon=0$, we assume in addition that $Du\neq0$.)
Suppose that we can find weights $w_1,w_2,w_3,w_4\in\R$ such that
\begin{equation} \label{eq:lower-bound-for-S}
    S\geq \lambda(|Du|^2+\epsilon)^{\frac{p-2+s}{2}}|D^2u|^2 \quad\text{a.e.\ in space in }\Omega_T,
\end{equation}
for some constant $\lambda=\lambda(n,p,\gamma,s,w_1,w_2,w_3,w_4)>0$.
If $s\neq \gamma-p$, then for any concentric parabolic cylinders $Q_r\subset Q_{2r}\Subset\Omega_T$ with center point $(x_0,t_0)\in\Om_T$ we have the estimate
\begin{equation} \label{eq:lemma-for-final-estimate-A}
    \begin{aligned}
    \int_{Q_r}&\abs{D\big((|Du|^2+\epsilon)^{\frac{p-2+s}{4}}Du\big)}^2dxdt\\
    \leq&
    \frac{C}{r^2}\Big(
    \int_{Q_{2r}}(|Du|^2+\epsilon)^{\frac{p-2+s}{2}}|Du|^2dxdt
    +\int_{Q_{2r}}(|Du|^2+\epsilon)^{\frac{p+s-\gamma}{2}}dxdt\Big) \\
    &+\epsilon\Big(\frac{C}{r^2}\int_{Q_{2r}}\abs{\ln(|Du|^2+\epsilon)}dxdt 
    +C\int_{B_{2r}}\abs{\ln\big(|Du(x,t_0)|^2+\epsilon\big)}dx
    \Big)
    \end{aligned}
\end{equation}
where $C=C(n,p,\gamma,s,\lambda,w_1,w_2,w_3,w_4)>0$.
\begin{proof}[Proof of Lemma \ref{lem:lemma-for-final-estimate}]
Let us assume that $s\neq \gamma-p$ and $s\neq \gamma-p+2$. As remarked after the lemma when $s\neq \gamma-p+2$, the logarithmic term does not appear in $S$ and in the estimate of the lemma.
Assumption \eqref{eq:lower-bound-for-S} can be written as
\begin{equation} \label{eq:key-inequality}
\begin{aligned}
    \lambda(|Du|^2+&\epsilon)^{\frac{p-2+s}{2}}|D^2u|^2\\
    &\leq 
    w_1\diverg\big((|Du|^2+\epsilon)^{\frac{p-2+s}{2}}(D^2uDu-\Delta uDu)\big) \\
    & \quad
    +w_2
    \diverg\big(u_t(|Du|^2+\epsilon)^{\frac{p-2+s-\gamma}{2}} Du\big)  
    -w_2\Big(\frac{(|Du|^2+\epsilon)^{\frac{p+s-\gamma}{2}}}{p+s-\gamma}\Big)_t \\
    & \quad
    +\epsilon w_3\diverg\big((|Du|^2+\epsilon)^{\frac{p-4+s}{2}}(D^2uDu-\Delta uDu)\big) \\
    &\quad
    +\epsilon w_4
    \diverg\big(u_t(|Du|^2+\epsilon)^{\frac{p-4+s-\gamma}{2}} Du\big)  
    -\epsilon w_4\Big(\frac{(|Du|^2+\epsilon)^{\frac{p-2+s-\gamma}{2}}}{p-2+s-\gamma}\Big)_t.
\end{aligned}
\end{equation}
Let us fix any concentric parabolic cylinders $Q_r\subset Q_{2r}\Subset \Om_T$ and then select a nonnegative cutoff function 
$\phi\colon\Rn\times[0,t_0]\to[0,1]$ such that
\begin{equation} \label{eq:cutoff}
    \phi\equiv1\enskip\text{in }Q_r,
    \quad
    \phi\equiv0\enskip\text{outside }Q_{2r},
    \quad
    |D\phi|\leq \frac{C}{r}
    \quad\text{and}\quad
    |\phi_t|\leq \frac{C}{r^2}
\end{equation}
for some absolute constant $C>0$.
We multiply \eqref{eq:key-inequality} with $\phi^2$ and integrate over $Q_{2r}$,
apply integration by parts to each integral on the right hand side to obtain
\begin{equation} \label{eq:Estimate-without-log}
\begin{aligned}
    \lambda\int_{Q_{2r}}(|Du|^2+&\epsilon)^{\frac{p-2+s}{2}}|D^2u|^2\phi^2dxdt \\
    &\leq
    -2w_1\int_{Q_{2r}}(|Du|^2+\epsilon)^{\frac{p-2+s}{2}}\la D^2uDu-\Delta uDu,D\phi\ra \phi dxdt \\
    &\quad
    -2w_2\int_{Q_{2r}} u_t(|Du|^2+\epsilon)^{\frac{p-2+s-\gamma}{2}} \la Du,D\phi\ra\phi dxdt \\
    &\quad
    +\frac{2w_2}{p+s-\gamma}\int_{Q_{2r}}(|Du|^2+\epsilon)^{\frac{p+s-\gamma}{2}}\phi_t\phi dxdt \\
    &\quad
    -2\epsilon w_3\int_{Q_{2r}}(|Du|^2+\epsilon)^{\frac{p-4+s}{2}}\la D^2uDu-\Delta uDu,D\phi\ra \phi dxdt \\
    &\quad
    -2\epsilon w_4
    \int_{Q_{2r}}u_t(|Du|^2+\epsilon)^{\frac{p-4+s-\gamma}{2}}\la Du,D\phi\ra\phi dxdt \\
    &\quad
    +\frac{2\epsilon w_4}{p-2+s-\gamma}\int_{Q_{2r}}(|Du|^2+\epsilon)^{\frac{p-2+s-\gamma}{2}}\phi_t\phi dxdt.
\end{aligned}
\end{equation}
Above we dropped the nonpositive boundary terms that appear when we integrate by parts with respect to time.
Next we take absolute values and estimate $\eps/(|Du|^2+\epsilon)\le 1$
in the last three  integrals of the above display. We arrive at
\begin{equation}
\begin{aligned}
    &\lambda\int_{Q_{2r}}(|Du|^2+\epsilon)^{\frac{p-2+s}{2}}|D^2u|^2\phi^2dxdt \\
    &\leq
    C\Big(\int_{Q_{2r}}(|Du|^2+\epsilon)^{\frac{p-2+s}{2}}|D^2u||Du||D\phi|\phi dxdt \\
    &\quad
    +\int_{Q_{2r}} |u_t|(|Du|^2+\epsilon)^{\frac{p-2+s-\gamma}{2}}|Du||D\phi|\phi dxdt \\
    &\quad
    +\int_{Q_{2r}}(|Du|^2+\epsilon)^{\frac{p+s-\gamma}{2}}|\phi_t|\phi dxdt\Big),
\end{aligned}
\end{equation}
where $C=C(n,p,\gamma,s,w_1,w_2,w_3,w_4)>0$. 
By Young's inequality
\begin{equation}
\begin{aligned}
    &(\lambda-2\eta)\int_{Q_{2r}}(|Du|^2+\epsilon)^{\frac{p-2+s}{2}}|D^2u|^2\phi^2dxdt \\
    &\leq
    \frac{C}{\eta}\int_{Q_{2r}}(|Du|^2+\epsilon)^{\frac{p-2+s}{2}}|Du|^2|D\phi|^2dxdt
    +C\int_{Q_{2r}}(|Du|^2+\epsilon)^{\frac{p+s-\gamma}{2}}|\phi_t|\phi dxdt,
\end{aligned}
\end{equation}
for any $\eta>0$ and some $C=C(n,p,\gamma,s,w_1,w_2,w_3,w_4)>0$.
Above we also employed equation \eqref{eq:PDE} and estimated
\begin{equation} \label{eq:Estimate-for-time-derivative}
    (|Du|^2+\epsilon)^{-\gamma/2}|u_t|
    =\abs{\Delta u+(p-2)\frac{\il u}{|Du|^2+\epsilon}}\leq C|D^2u| 
\end{equation}
for some $C=C(n,p)>0$. 
Finally, we select $\eta>0$ small enough and employ \eqref{eq:cutoff} together with the fact that 
\begin{equation} \label{eq:derivative-of-vectorfield}
    \abs{D\big((|Du|^2+\epsilon)^{\frac{p-2+s}{4}}Du\big)}^2\le C(|Du|^2+\epsilon)^{\frac{p-2+s}{2}}|D^2u|^2
\end{equation}
where $C=C(p,s)>0$, to arrive to the desired estimate.

Now, let us assume that $s=\gamma-p+2$.
Assumption \eqref{eq:lower-bound-for-S} is now
\begin{align} \label{eq:key-inequality-log}
    \lambda(|Du|^2+\epsilon)^{\gamma/2}|D^2u|^2
    &\leq
    w_1\diverg\big((|Du|^2+\epsilon)^{\gamma/2}(D^2uDu-\Delta uDu)\big)  \nonumber \\
    &\quad
    +w_2
    \diverg(u_tDu)
    -\frac12 w_2 ( |Du|^2+\epsilon )_t \\
    &\quad
    +\epsilon w_3\diverg\big((|Du|^2+\epsilon)^{\frac{\gamma-2}{2}}(D^2uDu-\Delta uDu)\big)  \nonumber\\
    &\quad
    +\epsilon w_4
    \diverg\big(u_t(|Du|^2+\epsilon)^{-1}Du\big) 
    -\frac{\epsilon}2  w_4 \big( \ln (|Du|^2+\epsilon) \big)_t. \nonumber
\end{align}
Let us fix any concentric parabolic cylinders $Q_r\subset Q_{2r}\Subset \Om_T$ and then select a nonnegative cutoff function $\phi\colon\Rn\times[0,t_0]\to[0,1]$ such that \eqref{eq:cutoff} holds.
We multiply \eqref{eq:key-inequality-log} with $\phi^2$, integrate over $Q_{2r}$,
apply integration by parts to each integral on the right hand side to obtain 
\begin{align}
\label{eq:Estimate-with-log}
    &\lambda\int_{Q_{2r}}(|Du|^2+\epsilon)^{\gamma/2}|D^2u|^2\phi^2dxdt \nonumber \\
    &\leq
    -2w_1\int_{Q_{2r}}(|Du|^2+\epsilon)^{\gamma/2}\la D^2uDu-\Delta uDu,D\phi\ra \phi dxdt 
   \nonumber \\
    &\quad -2w_2\int_{Q_{2r}} u_t\la Du,D\phi\ra\phi dxdt
    +w_2\int_{Q_{2r}}(|Du|^2+\epsilon)\phi_t\phi dxdt \nonumber\\
    &\quad
    -2\epsilon w_3\int_{Q_{2r}}(|Du|^2+\epsilon)^{\frac{\gamma-2}{2}}\la D^2uDu-\Delta uDu,D\phi\ra \phi dxdt \\
    &\quad
    -2\epsilon w_4\int_{Q_{2r}}u_t(|Du|^2+\epsilon)^{-1}\la Du,D\phi\ra\phi dxdt
    +\epsilon w_4\int_{Q_{2r}}\ln (|Du|^2+\epsilon)\phi_t\phi dxdt\nonumber\\
    &\quad
    -\frac{\epsilon w_4}{2}\int_{B_{2r}}\ln(|Du(x,t_0)|^2+\epsilon)\phi^2(x,t_0)dx
\end{align}
Above we dropped the nonpositive boundary term that appears when we integrate by parts with respect to time. However, we cannot drop the boundary term that appears from integrating by parts the last term of the right hand side of \eqref{eq:key-inequality-log}, because logarithm may change sign.

Next we take absolute values and employ again the estimate $\eps/(|Du|^2+\epsilon)\le 1$ to arrive at
\begin{equation}
\begin{aligned}
    &\lambda\int_{Q_{2r}}(|Du|^2+\epsilon)^{\gamma/2}|D^2u|^2\phi^2dxdt \\
    &\leq
    C\Big(\int_{Q_{2r}}(|Du|^2+\epsilon)^{\gamma/2}|D^2u||Du||D\phi|\phi dxdt 
    +\int_{Q_{2r}} |u_t||Du||D\phi|\phi dxdt \\
    &\quad
    +\int_{Q_{2r}}(|Du|^2+\epsilon)|\phi_t|\phi dxdt
    +\epsilon\int_{Q_{2r}}\big|\ln(|Du|^2+\epsilon)\big||\phi_t|\phi dxdt \\
    &\quad
    +\epsilon\int_{B_{2r}}\abs{\ln\big(|Du(x,t_0)|^2+\epsilon\big)}\phi^2(x,t_0)dx
    \Big),
\end{aligned}
\end{equation}
where $C=C(n,p,\gamma,w_1,w_2,w_3,w_4)>0$.
By Young's inequality
\begin{equation}
\begin{aligned}
    &(\lambda-2\eta)\int_{Q_{2r}}(|Du|^2+\epsilon)^{\gamma/2}|D^2u|^2\phi^2dxdt \\
    &\leq
    \frac{C}{\eta}\int_{Q_{2r}}(|Du|^2+\epsilon)^{\gamma/2}|Du|^2|D\phi|^2dxdt
    +C\Big(\int_{Q_{2r}}(|Du|^2+\epsilon)|\phi_t|\phi dxdt \\
    &\quad
    +\epsilon\int_{Q_{2r}}\big|\ln(|Du|^2+\epsilon)\big||\phi_t|\phi dxdt
    +\epsilon\int_{B_{2r}}\big|\ln\big(|Du(x,t_0)|^2+\epsilon\big)\big|\phi^2(x,t_0)dx
    \Big)
\end{aligned}
\end{equation}
for any $\eta>0$ and some $C=C(n,p,\gamma,w_1,w_2,w_3,w_4)>0$.
Above we also employed estimate \eqref{eq:Estimate-for-time-derivative}.
Finally, we select $\eta>0$ small enough and employ \eqref{eq:cutoff} and \eqref{eq:derivative-of-vectorfield} to arrive to the desired estimate.
\end{proof}
\section{Positive definiteness condition for the coefficient matrix}

In the proof of Lemma~\ref{lem:smooth-key-lemma}, we wrote one of the key estimates as
\begin{equation*}
\begin{aligned}
    &|Du|^{p-2+s}
    \Big\{
    w_1(p+s)|D_T|Du||^2
    +Q
    \Big\}\le S
\end{aligned}
\end{equation*}
where
\begin{align*}
    Q
    &=
    \Big(w_2-\frac{n-2}{n-1}w_1\Big)(\Delta_T u)^2
    +w_2(p-1)(p-1+s-\gamma)(\ilN u)^2 \\
    &\quad
    +\big(w_2(2p-2+s-\gamma)-w_1(p+s)\big)\Delta_Tu\ilN u.
\end{align*}
This can also be written as
$$ Q=\la\bar{x},M\bar{x}\ra, $$
where $\bar{x}=(\Delta_T u,\ilN u)^T\in \R^2$ is a vector and
$$ M=
\begin{bmatrix}
w_2- \displaystyle{\frac{n-2}{n-1}} w_1 & \displaystyle{\frac{1}{2}}\big(w_2(2p-2+s-\gamma)-w_1(p+s)\big) \\
\displaystyle{\frac{1}{2}}\big(w_2(2p-2+s-\gamma)-w_1(p+s)\big) & w_2(p-1)(p-1+s-\gamma)
\end{bmatrix}.
$$
Then we stated that if $w_1=1$ and the range condition is satisfied, we can select $w_2=w_2(n,p,\gamma,s)>0$ in such a way that $Q$ is positive definite, which then allows us to get rid of the excess terms. Next we prove this fact.
\begin{lemma} \label{lem:Range-of-w2}
Let $n\geq 2$, $1<p<\infty$, $-1<\gamma<\infty$, $w_1=1$ and let M be as above.
Then  if
$$ s>\max\Big\{-1-\frac{p-1}{n-1},\gamma+1-p\Big\}, $$
there is $w_2=w_2(n,p,\gamma,s)>0$  such that $M$ is uniformly bounded positive definite (with a uniform constant). 
\end{lemma}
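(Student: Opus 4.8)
The plan is to reduce positive definiteness of $M$ to Sylvester's criterion and then analyse $\det M$ as a polynomial in $w_2$. Since the entries of $M$ depend only on the fixed parameters $n,p,\gamma,s$ once $w_1=1$ is inserted and a value of $w_2$ is chosen, the phrase ``uniformly bounded positive definite with a uniform constant'' is automatic here: it suffices to produce a single $w_2=w_2(n,p,\gamma,s)>0$ for which $M$ is positive definite.

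First I would set up convenient abbreviations: $\mu:=\frac{n-2}{n-1}\in[0,1)$, $u:=p-1>0$, $v:=p-1+s-\gamma$, so that $b:=2p-2+s-\gamma=u+v$, $a:=(p-1)(p-1+s-\gamma)=uv$, and $c:=p+s$; thus $M_{11}=w_2-\mu$, $M_{12}=\tfrac12(w_2b-c)$, $M_{22}=w_2a$. The two halves of the hypothesis translate transparently: $s>\gamma+1-p$ is exactly $v>0$ (so $a,b>0$), while $s>-1-\frac{p-1}{n-1}$ rearranges to $c>\mu u\ge0$, in particular $c>0$. A further elementary remark, using only $\mu<1$, $v>0$ and $\gamma>-1$, is that $c-\mu v>c-v=1+\gamma>0$. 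By Sylvester's criterion, $M$ is positive definite if and only if $w_2>\mu$ and $g(w_2):=4\det M=(4a-b^2)w_2^2+(2bc-4\mu a)w_2-c^2>0$.

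The crux is that the leading coefficient is $4a-b^2=-(u-v)^2\le0$, so $g$ is concave and cannot be forced positive by taking $w_2$ large unless $u=v$. When $u=v$ (that is, $s=\gamma$), $g$ is affine with slope $2bc-4\mu a=4u(c-\mu u)>0$, so any sufficiently large $w_2$ works. When $u\ne v$, $g$ is a genuine downward parabola; here the naive choice $w_2=c/b$ that annihilates $M_{12}$ need not satisfy the constraint $w_2>\mu$, so instead I would evaluate $g$ at its vertex $w_2^{*}=\frac{bc-2\mu a}{b^2-4a}$ (the denominator being $(u-v)^2>0$). After routine algebra, the two required inequalities collapse to the same factorisation: $g(w_2^{*})>0$ is equivalent to $c^2-\mu bc+\mu^2 a=(c-\mu u)(c-\mu v)>0$, and the numerator of $w_2^{*}-\mu$ simplifies to $(u+v)c-\mu(u^2+v^2)=u(c-\mu u)+v(c-\mu v)>0$. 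Both hold precisely because $c-\mu u>0$ (first half of the range condition) and $c-\mu v>0$ (second half together with $\gamma>-1$). Taking $w_2=w_2^{*}$ in the case $u\ne v$, and $w_2$ large enough when $s=\gamma$, then gives the claim.

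The main obstacle is conceptual rather than computational: one has to notice that the ``obvious'' choice $w_2=c/b$ can violate $w_2>\mu$, so the determinant must actually be maximised over $w_2$, landing exactly at the vertex $w_2^{*}$; and then that the two ensuing positivity requirements both reduce to the product $(c-\mu u)(c-\mu v)>0$, which is exactly what makes both lower bounds on $s$ in the hypothesis enter. The remaining computations (expanding $g(w_2^{*})$, the factorisation, and the simplification of $w_2^{*}-\mu$) are straightforward.
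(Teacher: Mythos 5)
Your argument is correct and follows essentially the same route as the paper: Sylvester's criterion reduces everything to the sign of $\det M$, which is a concave quadratic in $w_2$ (leading coefficient $-(u-v)^2/4$), and the positivity condition factors exactly into the two halves of the range condition, $c-\mu u>0$ and $c-\mu v>0$ (the paper's $E>0$ and $\tfrac{G}{n-1}+K>0$). The only cosmetic difference is that you evaluate the quadratic at its vertex and check $w_2^{*}>\mu$ directly, whereas the paper computes the two roots, observes both are nonnegative squares, and places $w_2-\tfrac{n-2}{n-1}$ strictly between them, treating the degenerate case $u=v$ by a limiting argument rather than your (cleaner) direct affine case.
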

\begin{proof} 
We will show that $\operatorname{det}(M)>0$ and $w_2-\frac{n-2}{n-1}>0$ with uniform lower bound, and thus by Sylvester's condition $M$ is uniformly bounded positive definite with a uniform
constant.
We fix $w_1=1$ and introduce the following shorthand notation,
\begin{equation} \label{eq:Shorthand-notation-1}
    P:=p-1\quad\text{and}\quad K:=\gamma+1,
\end{equation}
and 
\begin{equation} \label{eq:Shorthand-notation-2}
    G:=p-1+s-\gamma\quad\text{and}\quad E:=s+1+\frac{p-1}{n-1}.
\end{equation}
We observe that $P, K, G, E>0$ under the assumptions of the lemma. 
Using this notation, one has
$$ M=
\begin{bmatrix}
w_2- \displaystyle{\frac{n-2}{n-1}}   & \displaystyle{\frac{1}{2}}\big(w_2(P+G)- (K+G)\big) \\
\displaystyle{\frac{1}{2}}\big(w_2(P+G)- (K+G)\big) & w_2P\cdot G
\end{bmatrix}
.$$
Then we rewrite the determinant
\begin{align*}
    \operatorname{det}(M)
    =a\Big(w_2-\frac{n-2}{n-1}\Big)^2+b\Big(w_2-\frac{n-2}{n-1}\Big)+c
\end{align*}
where
\begin{align}
   a=-\frac{1}{4}(G-P)^2,\quad  b=P\cdot E+\frac{1}{2}(G-P)\Big(\frac{G}{n-1}+K-\frac{(n-2)P}{n-1}\Big) \nonumber
\end{align}
and
$$ c=-\frac{1}{4}\Big(\frac{G}{n-1}+K-\frac{(n-2)P}{n-1}\Big)^2. $$
The discriminant of such a polynomial is
\begin{align*}
    b^2-4ac=G\cdot P\cdot E\Big(\frac{G}{n-1}+K\Big).
\end{align*}
Notice that $b^2-4ac>0$ and hence our polynomial has two distinct roots, unless $G=P$, in which case our polynomial is of the first order and has one root. Moreover $\operatorname{det}(M)>0$ if and only if $w_2-\frac{n-2}{n-1}$ lies between these roots, that is,
\begin{equation} \label{eq:Between-roots}
    Root_+<w_2-\frac{n-2}{n-1}<Root_-, 
\end{equation}
where
\begin{align*}
    Root_{\pm}
    &=\frac{-\Big(P\cdot E+\frac{1}{2}(G-P)\big(\frac{G}{n-1}+K-\frac{(n-2)P}{n-1}\big)\Big)\pm\sqrt{b^2-4ac}}{-\frac{1}{2}(G-P)^2} \\
    &=\frac{\Big(\sqrt{P\cdot E}\mp\sqrt{G(\frac{G}{n-1}+K)}\Big)^2}{(G-P)^2}\\
    &=\Bigg(\frac{\sqrt{E}}{\sqrt{G}\pm\sqrt{P}}-\frac{\sqrt{G}\Big(\sqrt{E}-\sqrt{\frac{G}{n-1}+K}\Big)}
    {(\sqrt{G}+\sqrt{P})(\sqrt{G}-\sqrt{P})}\Bigg)^2\geq0.
\end{align*}
These formulas are valid if $G\neq P$. Indeed, recall that $a< 0$ if $G\neq P$, then 
\begin{align*}
    Root_{-}-Root_+
    &= -\frac{\sqrt{b^2-4ac}}{a}\\
    &= 4\frac{\sqrt{G\cdot P\cdot E(\frac{G}{n-1}+K )}}{(G-P)^2}>0.
\end{align*}

On the other hand, by
$$ \lim_{G\to P}E=\lim_{s\to\gamma}\Big(s+1+\frac{p-1}{n-1}\Big)=\frac{P}{n-1}+K, $$
and by l'Hopital's rule, one has
\begin{align*}
   \lim_{G\to P}\frac{\sqrt{E}-\sqrt{\frac{G}{n-1}+K}}
    {\sqrt{G}-\sqrt{P}}
    =\frac{(n-2)\sqrt{P}}{(n-1)\sqrt{\frac{P}{n-1}+K}}.
\end{align*}
We conclude that for the smaller root
\begin{align} \label{eq:Root+}
    Root_+\xrightarrow{G\to P}
    \Bigg(\frac{\sqrt{\frac{P}{n-1}+K}}{2\sqrt{P}}-\frac{(n-2)\sqrt{P}}{2(n-1)\sqrt{\frac{P}{n-1}+K}}\Bigg)^2.
\end{align}
For the bigger root, it is easy to see that
\begin{equation} \label{eq:Root-}
    Root_-\xrightarrow{G\to P}\infty.
\end{equation}
The proof is finished.
\end{proof}

\section*{Acknowledgement}
The first author was supported by China Scholarship Council, no. 202006020186. The third author was supported  by  the Academy of Finland, Center of Excellence in Randomness and Structures and the Academy of Finland, project 308759.

\def\cprime{$'$} \def\cprime{$'$} \def\cprime{$'$}

\tiny{Data availability statement required by the journal: Data sharing not applicable to this article as no datasets were generated or analysed during the current study.}
\end{document}